\newtheorem{theorem}{Theorem}
\newtheorem{rmk}{Remark}
\newtheorem{lemma}{Lemma}
\newtheorem{corollary}{Corollary}
\setlist[enumerate]{leftmargin=.5in}
\setlist[itemize]{leftmargin=.5in}
\newcommand{\bydef}{:=}
\providecommand{\transp}{}
\renewcommand{\transp}{\mathrm{T}}
\newcommand{\Exp}{\mathbb{E}}
\newcommand{\Var}{\mathbb{V}}
\newcommand{\Cov}{\mathbb{C}}
\newcommand{\R}{\mathbb{R}}
\newcommand{\N}{\mathbb{N}}
\DeclareMathOperator{\mse}{MSE}
\DeclareMathOperator*{\argmin}{arg\,min}
\let\vect\relax
\DeclareMathOperator{\vect}{vec}
\DeclareMathOperator{\tr}{tr}
\DeclareMathOperator{\Span}{span}
\DeclareMathOperator{\diag}{diag}
\DeclareMathOperator{\Diag}{Diag}
\DeclareMathOperator{\Circ}{Circ}
\newcommand{\id}{I}
\newcommand{\bX}{\mathbf{X}}
\newcommand{\bx}{\mathbf{x}}
\newcommand{\bY}{\mathbf{Y}}
\newcommand{\by}{\mathbf{y}}
\newcommand{\bZ}{\mathbf{Z}}
\newcommand{\bz}{\mathbf{z}}
\newcommand{\bR}{\mathbf{R}}
\newcommand{\bP}{\mathbf{P}}
\newcommand{\bQ}{\mathbf{Q}}
\newcommand{\bC}{\mathbf{C}}
\newcommand{\bS}{\mathbf{S}}
\newcommand{\bA}{\mathbf{A}}
\newcommand{\bM}{\mathbf{M}}
\newcommand{\bI}{\mathbf{I}}
\newcommand{\bK}{\mathbf{K}}
\newcommand{\bF}{\mathbf{F}}
\newcommand{\bG}{\mathbf{G}}
\newcommand{\bW}{\mathbf{W}}
\newcommand{\bH}{\mathbf{H}}
\newcommand{\bh}{\mathbf{h}}
\newcommand{\bB}{\mathbf{B}}
\newcommand{\bL}{\mathbf{L}}
\newcommand{\bu}{\mathbf{u}}
\newcommand{\bV}{\mathbf{V}}
\newcommand{\be}{\mathbf{e}}
\newcommand{\ba}{\mathbf{a}}
\newcommand{\bmu}{\bm{\mu}}
\newcommand{\bnu}{\bm{\nu}}
\newcommand{\btheta}{\bm{\theta}}
\newcommand{\bSigma}{\bm{\Sigma}}
\newcommand{\bGamma}{\bm{\Gamma}}
\newcommand{\bDelta}{\bm{\Delta}}
\newcommand{\bLambda}{\bm{\Lambda}}
\newcommand{\bPi}{\bm{\Pi}}
\newcommand{\bzero}{\bm{0}}
\newcommand{\bone}{\bm{1}}
\newcommand*{\indic}{\text{\usefont{U}{bbold}{m}{n}1}}
\newcommand{\pre}{\textnormal{pre}}
\newcommand{\post}{\textnormal{post}}
\newcommand{\transf}{\textnormal{transf}}
\newcommand{\nucum}{\nu^{\textnormal{cml}}}
\newcommand{\bnucum}{\bnu^{\textnormal{cml}}}
\newcommand{\muMC}{\hat{\bmu}^{\textnormal{MC}}}
\newcommand{\muMLMChilb}{\hat{\mu}^{\textnormal{MLMC}}}
\newcommand{\muMLMC}{\hat{\bmu}^{\textnormal{MLMC}}}
\newcommand{\muFMLMC}{\hat{\bmu}^{\textnormal{F-MLMC}}}
\title{%
A filtered multilevel Monte Carlo method for estimating the expectation of cell-centered discretized random fields%
\footnote{%
    \includegraphics[height=2ex,trim=0 .4ex 0 0]{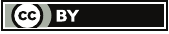} Distributed under a \href{https://creativecommons.org/licenses/by/4.0/}{CC-BY 4.0 licence}.
    This project has received financial support from the CNRS (Centre National de la Recherche Scientifique) through the 80|Prime program and the French national program LEFE (Les Enveloppes Fluides et l'Environnement).
}%
}
\date{}
\author[1,3]{Jérémy Briant}
\author[2,3,4]{Paul Mycek}
\author[5]{Mayeul Destouches}
\author[2,3]{Olivier Goux}
\author[1,6]{Serge Gratton}
\author[2,3]{Selime Gürol}
\author[1]{Ehouarn Simon}
\author[2,3]{Anthony T. Weaver}
\affil[1]{IRIT, UMR~5505, Université de Toulouse, CNRS, Toulouse INP, Toulouse, France (\url{jeremy.briant@toulouse-inp.fr}, \url{serge.gratton@toulouse-inp.fr}, \url{ehouarn.simon@toulouse-inp.fr}).}
\affil[2]{Cerfacs, Toulouse, France (\url{mycek@cerfacs.fr}, \url{goux@cerfacs.fr},
\url{gurol@cerfacs.fr},  \url{weaver@cerfacs.fr}).}
\affil[3]{CECI, UMR~5318, Université de Toulouse, Cerfacs, CNRS, IRD, Toulouse, France.}
\affil[4]{Concace, Airbus CR\&T, Cerfacs, Inria, France.}
\affil[5]{CNRM, UMR~3589, Université de Toulouse, Météo-France, CNRS, Toulouse, France (\url{mayeul.destouches@meteo.fr}).}
\affil[6]{ANITI, Toulouse, France.}
\begin{document}

\maketitle

\begin{abstract}
In this paper, we investigate the use of multilevel Monte Carlo (MLMC) methods for estimating the expectation of discretized random fields.
Specifically, we consider a setting in which the input and output vectors of numerical simulators have inconsistent dimensions across the multilevel hierarchy.
This motivates the introduction of grid transfer operators borrowed from multigrid methods.
By adapting mathematical tools from multigrid methods, we perform a theoretical spectral analysis of the MLMC estimator of the expectation of discretized random fields, in the specific case of linear, symmetric and circulant simulators.
We then propose filtered MLMC (F-MLMC) estimators based on a filtering mechanism similar to the smoothing process of multigrid methods, and we show that the filtering operators improve the estimation of both the small- and large-scale components of the variance, resulting in a reduction of the total variance of the estimator.
Next, the conclusions of the spectral analysis are experimentally verified with a one-dimensional illustration.
Finally, the proposed F-MLMC estimator is applied to the problem of estimating the discretized variance field of a diffusion-based covariance operator, which amounts to estimating the expectation of a discretized random field.
The numerical experiments support the conclusions of the theoretical analysis even with non-linear simulators, and demonstrate the improvements brought by the F-MLMC estimator compared to both a crude MC and an unfiltered MLMC estimator.
\end{abstract}

\paragraph{Keywords:}
Multilevel Monte Carlo, multigrid method, random field, spectral analysis, filtering, diffusion operator.

\section{Introduction}

In recent years, multilevel and multifidelity Monte Carlo (MC) methods (see, e.g., \cite{giles_multilevel_2015,peherstorfer_survey_2018} for surveys on these methods) have grown in popularity as a means to accelerate traditional MC methods by combining simulators of different fidelities to leverage the lower computational cost of lower-fidelity simulators while preserving the accuracy of the high-fidelity simulator.
Such lower fidelity numerical simulators can be obtained, e.g., from a coarser spatial and/or temporal discretization as is the case in multigrid methods \cite{Brandt1982_GuideMultigridDevelopment,Hackbusch1985_MultiGridMethodsApplications,trottenberg_multigrid_2000,briggs_multigrid_2000}.
In such instances, the different fidelities are referred to as levels, and the multifidelity method is then referred to as a multilevel method.
Among multilevel methods, the multilevel Monte Carlo (MLMC) method~\cite{heinrich_multilevel_2001, giles_multilevel_2008, giles_multilevel_2015}, which is the focus of the present paper, combines samples (or ensembles) from different levels in such a way that, under certain assumptions, the variance of the resulting multilevel estimator is reduced, while leaving the bias unchanged.
Originally designed for the estimation of the expected value of scalar, real-valued random variables, the MLMC methodology has since been extended to the estimation of higher-order statistical moments~\cite{bierig_convergence_2015, bierig_estimation_2016} and variance-based global sensitivity measures~\cite{mycek_multilevel_2019}.
The analysis of MLMC estimators has also been extended to the estimation of statistics of random variables with values in separable Hilbert spaces \cite{bierig_convergence_2015, bierig_estimation_2016}.

The estimation of covariance matrices is a prominent example where multilevel approaches have started to emerge, especially in the field of data assimilation \cite{hoel_multilevel_2016, destouches_mlblue_2023,Maurais2023_MultiFidelityCovarianceEstimation,Maurais2025_MultifidelityCovarianceEstimation,Destouches2024_MultilevelMonteCarlo}.
Another example, which will constitute the motivating example of this paper, is embedded in the general approach of using a discretized linear differential operator to represent the application of a parametric form of a covariance matrix.
In particular, we focus on an approach that uses a discretized diffusion operator to represent a covariance operator with a parametric kernel from the Mat\'ern family \cite{Stein_1999}. Diffusion operators are commonly used for modelling spatial covariances in ocean data assimilation \cite{weaver_correlation_2001} and are closely related to other techniques for modelling spatial covariances in atmospheric data assimilation \cite{Purser_2003}, geostatistical modelling \cite{lindgren_explicit_2011}, inverse problems \cite{buithanh13} and uncertainty quantification \cite{Drzisga2017_SchedulingMassivelyParallel}. Central to the approach is the need to extract the diagonal elements ({\it intrinsic} variances) of the diffusion-modelled covariance matrix so that they can be used to normalize the matrix, to transform it into a (unit-diagonal) correlation matrix. 
Once the covariance matrix is properly normalized, a desired variance field, different from the intrinsic variance field, can be imposed.
The standard method for estimating the intrinsic variances of the diffusion-based covariance matrix is the randomization method \cite{weaver_correlation_2001,weaver_evaluation_2021}. 
This method relies on the MC estimation of the expectation of a discretized random field, which, in a more abstract formulation, may be viewed as the output of a numerical simulator, whose input is also a discretized random field.

In this paper, we are interested in applying the MLMC methodology to improve the efficiency of estimating the expectation of cell-centered discretized random fields in the abstract setting described above.
The main specificities of this setting are that, first, the considered simulators are based on a hierarchy of cell-centered discretizations on grids of different resolution; second, the input and output of the numerical simulators are discretized fields whose dimensions depend on the level; and third, we focus on the discrete representation (as opposed to a functional representation) of these discretized fields, arising from the discretization of the strong form of the differential operators at hand (e.g., in our example, diffusion operators).
These specificities motivate the introduction of grid transfer operators, namely restriction and prolongation operators, which may be borrowed from multigrid methods \cite{Brandt1982_GuideMultigridDevelopment, Hackbusch1985_MultiGridMethodsApplications, trottenberg_multigrid_2000, briggs_multigrid_2000}.

Using MLMC with simulators whose inputs and/or outputs are discretized fields has been studied before
\cite{cliffe_multilevel_2011,
  Barth2011_MultilevelMonteCarlo,
  Mishra2012_SparseTensorMultilevel,
  Mishra2012_MultilevelMonteCarlo,
  Abdulle2013_MultilevelMonteCarlo,
  Gittelson2013_MultilevelMonteCarlo,
  Teckentrup2013_FurtherAnalysisMultilevel,
  Charrier2013_FiniteElementError,
  Istratuca2025_SmoothedCirculantEmbedding,
  croci_efficient_2018}, typically in a functional framework based on the finite element (FE) or finite volume (FV) method, which prompts the use of natural grid transfer operators, especially prolongation operators, which are then usually only implicitly defined.
Therefore, their impact on the quality of the resulting multilevel estimator is generally not studied.
Nonetheless, the introduction of grid transfer operators raises crucial questions about their effect on the different spatial scales (or, equivalently, frequencies) of the discretized fields (or signals) that are transferred between grids of different resolution.
We note that the MLMC estimator proposed in \cite{croci_efficient_2018} circumvents the need to use restriction operators by explicitly exploiting the joint distribution of the pair of input fields defined on successive levels within each MLMC correction term, which can be exhibited for the specific problem addressed in \cite{croci_efficient_2018}.

The question of the representation of small scales on coarse grids within the MLMC framework has been explored previously by \cite{Teckentrup2013_FurtherAnalysisMultilevel,Istratuca2025_SmoothedCirculantEmbedding} in an FE setting where the input of the considered simulator is a discretized random field.
The authors proposed the idea of smoothing highly oscillatory input signals based on a spectral truncation of their high-frequency components, which is controlled for each MLMC level so that the truncation error matches the discretization error.
In this paper, we investigate these important questions in the abstract setting described above.
Our objective is to gain a better understanding of the impact of grid transfer operators on the quality of the MLMC estimator in order to propose remedies for their negative effects.
This leads us to propose a novel, filtered MLMC (F-MLMC) estimator that mitigates the negative effects of the grid transfer operators on the quality of the multilevel estimator.
In multigrid methods~\cite{Brandt1982_GuideMultigridDevelopment, Hackbusch1985_MultiGridMethodsApplications,trottenberg_multigrid_2000,briggs_multigrid_2000,wienands_practical_2004}, studying the effects of grid transfer operators on the different frequencies of a discrete signal, which is typically achieved through a spectral analysis, is core to determining the effectiveness of the method.
Along the same lines, we present a spectral analysis of the MLMC and F-MLMC estimators in a simplified setting where the numerical simulators are assumed to be linear, symmetric and circulant operators.

The remainder of this paper is organized as follows.
\Cref{sec:mlmc_for_discretized_fields} introduces the MLMC and F-MLMC estimators of the expectation of random fields as adaptations of the framework in \cite{bierig_convergence_2015} to our specific setting, relying on (possibly filtered) grid transfer operators.
A spectral analysis of the MLMC and F-MLMC estimators is then carried out in \cref{sec:spectral_analysis} to investigate the effects of grid transfer on their variance, in the specific case of linear, symmetric and circulant numerical simulators.
In \cref{sec:experiments}, we apply the MLMC and F-MLMC estimators, first, to a one-dimensional illustration that comply with the assumptions of the spectral analysis, and, second, to our target problem of estimating the intrinsic variances of a two-dimensional, heterogeneous, diffusion-based covariance operator.
General conclusions are drawn in \cref{sec:conclusion}, along with prospective avenues for future work.

\section{MLMC estimation of the expectation of discretized random fields}
\label{sec:mlmc_for_discretized_fields}

The MLMC method aims to improve the accuracy of MC estimators by combining samples of different fidelities. 
In favorable cases, large low-fidelity samples (i.e., with many members that are cheap to generate on coarse levels) are used to reduce the sampling error (variance), while smaller samples are required at finer and more expensive levels to correct the bias. 
Under certain assumptions, \cite[Theorem~1]{cliffe_multilevel_2011} ensures that there exists a sample allocation on a finite number of levels such that the computational cost of the MLMC estimator decreases at a faster rate as a function of the mean square error (MSE), than that of the crude MC estimator. 
In practice, MLMC is typically implemented as a sequential algorithm, whose main idea is to start with a limited number of coarse fidelity levels, and add as many finer levels as needed to reach a target MSE, with a prescribed variance/bias balance. 
In the present work, however, we adopt a multilevel approach that is closer to multifidelity methods. 
Specifically, a fine, high-fidelity level is fixed (and thus so is the bias), while coarser, low-fidelity levels are considered to reduce the variance, for a prescribed computational budget. 

In this paper, we focus on the multilevel estimation of the expectation of cell-centered discretized random fields.
We assume that such discretized random fields can be represented as random variables defined on some probability space $(\Omega, \Sigma, \mathbb{P})$, with values in separable Hilbert spaces and having finite second-order moment.
We thus rely on the mathematical framework proposed in \cite{bierig_convergence_2015} for the design and analysis of MC and MLMC estimators in such a setting. 
Specifically, given a separable Hilbert space $H$ equipped with the inner product $\langle \cdot, \cdot \rangle_H$ and induced norm $\|\cdot\|_H$, the space of second-order $H$-valued random variables on  $(\Omega, \Sigma, \mathbb{P})$,
\begin{equation}
    L^2(\Omega, H) \bydef \{ \xi \colon \Omega \to H \mid \textstyle\int_{\Omega} \| \xi(\omega) \|^2_H \, \mathrm{d} \mathbb{P}(\omega) < +\infty \},
\end{equation}
is a Hilbert space when equipped with the inner product $\langle \cdot, \cdot \rangle_{L^2(\Omega,H)} $ defined as
\begin{equation}
    \forall \xi,\eta \in L^2(\Omega, H),
    \quad
    \langle \xi, \eta \rangle_{L^2(\Omega,H)} 
    \bydef \int_{\Omega} \langle \xi(\omega), \eta(\omega) \rangle_H \, \mathrm{d} \mathbb{P}(\omega),
\end{equation}
whose induced norm is denoted by $\| \cdot \|_{L^2(\Omega,H)}$.
More specifically, we are interested in discretized random fields that correspond to the output of a numerical simulator whose input is also a discretized random field.
Formally, we consider an abstract, deterministic numerical simulator as $f \colon H_{\textnormal{in}} \to H_{\textnormal{out}}$, where
$H_{\textnormal{in}}$ and $H_{\textnormal{out}}$ are two separable Hilbert spaces, and we assume that $f$ is such that $f(X) \in L^2(\Omega, H_{\textnormal{out}})$ for any $X \in L^2(\Omega, H_{\textnormal{in}})$.
Then,
given an input $X \in L^2(\Omega, H_{\textnormal{in}})$, the Bochner integral $\Exp[f(X)] \bydef \int_{\Omega} f(X(\omega)) \mathrm{d} \mathbb{P}(\omega)$, which we shall refer to as the expectation of the output $f(X)$, is well-defined and we have $\Exp[f(X)] \in H_{\textnormal{out}}$~\cite{DaPrato2014_StochasticEquationsInfinite, Hytonen2016_AnalysisBanachSpaces}.
From here on, for the sake of exposition and without loss of generality, we restrict ourselves to the case where $H_{\textnormal{in}} = H_{\textnormal{out}} = H$.

We now turn to the multilevel estimation of $\Exp[f(X)]$, which relies on a hierarchy (sequence) of $L+1$ numerical simulators $(f_\ell)_{\ell=0}^{L}$.
First, in \cref{sec:MLMC_consistent}, we describe the setting where the input and ouput Hilbert space $H$ is the same across the simulators and briefly recall the tools introduced in \cite{bierig_convergence_2015} for the analysis of the MLMC expectation estimator in this setting.
Next, in \cref{sec:MLMC_inconsistent}, we examine the more general case where the input and output spaces $H_\ell$ of the simulators vary across the levels of the hierarchy, and introduce transfer operators between these spaces, eventually leading to a multilevel estimator for which the tools of \cite{bierig_convergence_2015} remain relevant.
We then describe in \cref{sec:discrete_representation} the particular choice of spaces $H_\ell$ that will be considered in the remainder of the paper, arising from the discrete approximation of differential operators.
\Cref{sec:fmlmc} presents the proposed F-MLMC estimator, based on the introduction of filtering operators.
Finally, we outline some practical considerations for the remainder of this paper in \cref{sec:pract-cons}.

\subsection{MLMC with consistent input and output space}\label{sec:MLMC_consistent}

Let $X \in L^2(\Omega, H)$ be the $H$-valued random variable corresponding to a discretized random field.
We consider a multilevel hierarchy of $L+1$ abstract numerical models 
$(f_{\ell} \colon H \to H)_{\ell=0}^{L}$
of increasing fidelity.
We are interested in estimating the expectation of the output of the finest (highest-fidelity) model, 
$\Exp[f_L (X)]$.
The MLMC estimator $\muMLMChilb_L$ of $\Exp[f_L (X)]$, using a sequence $( \{X^{(\ell,i)}\}^{M_{\ell}}_{i=1} )_{\ell=0}^L$ of $L+1$ independent samples of $X$, is defined as
\begin{equation}\label{eq:mlmc_estimator} 
    \muMLMChilb_L
    = 
    \frac{1}{M_0} \sum_{i=1}^{M_0} f_{0} ( X^{(0,i)} ) 
    +
    \sum_{\ell=1}^L \frac{1}{M_{\ell}} \sum_{i=1}^{M_{\ell}} 
    \left[ 
        f_{\ell} ( X^{(\ell,i)} ) - f_{\ell-1} ( X^{(\ell,i)} )
    \right]
    \in L^2(\Omega, H).
\end{equation}
The accuracy of this MLMC estimator may be quantified by its normwise MSE with respect to some $\mu \in H$, defined by (see, e.g., \cite{bierig_convergence_2015})
\begin{equation}\label{eq:def_mse_vector}
    \mse( \muMLMChilb_L, \mu)
    \bydef
    \| \muMLMChilb_L - \mu \|^2_{L^2(\Omega, H)}.
\end{equation}
As shown in~\cite[Theorem~3.1]{bierig_convergence_2015}, the MSE admits the decomposition
\begin{equation}\label{eq:mse_vector_decomposition}
    \mse( \muMLMChilb_L, \mu ) 
    = 
    \mathcal{V}(\muMLMChilb_L)
    +
    \| \mathbb{E}[f_L(X)] - \mu \|^2_{H},
\end{equation}
where
\begin{equation}\label{eq:var_decomposition}
    \forall Y \in L^2(\Omega, H),
    \quad
    \mathcal{V}(Y)
    \bydef \| Y - \Exp[Y] \|^2_{L^2(\Omega, H)}
    = \Exp [\|Y\|^2_{H} ] - \| \Exp[Y] \|^2_{H},
\end{equation}
with the shorthand notation $\Exp[\| \cdot \|^2_H]^{1/2} = \| \cdot \|_{L^2(\Omega,H)}$.
The first term in \cref{eq:mse_vector_decomposition} is referred to as the variance of the multilevel estimator, while the second term corresponds to the squared bias.
Furthermore, \cite[Theorem~3.1]{bierig_convergence_2015} shows that the variance $\mathcal{V}(\muMLMChilb_L)$ can be further decomposed level-wise into
\begin{equation}\label{eq:var_EMLMC_vector}
    \mathcal{V}(\muMLMChilb_L) =
    \frac{1}{M_0} \mathcal{V}(f_0(X)) 
    +
    \sum^L_{\ell=1} \frac{1}{M_{\ell}} \mathcal{V}(f_{\ell}(X) - f_{\ell-1}(X)).
\end{equation}%
We note that the multilevel estimator is unbiased with respect to the expectation of the output at the finest level $\mu_L \bydef \Exp[f_L(X)]$, so that $\mse( \muMLMChilb_L, \mu_L) = \mathcal{V}(\muMLMChilb_L)$.%

\subsection{MLMC with inconsistent input and output spaces}\label{sec:MLMC_inconsistent}

In typical applications, especially when the simulators are related to discretized partial differential equations (PDEs), and that their fidelity is dictated by the quality of the discretization, the input and output spaces $H_\ell$ are typically finite-dimensional and generally not the same across the simulators of the hierarchy.
The multilevel hierarchy of numerical simulators then becomes $(\tilde{f}_{\ell} \colon H_\ell \to H_\ell)_{\ell=0}^L$,
and we are interested in the expectation of the highest-fidelity output, $\Exp[\tilde{f}_L(X_L)] \in H_L$, for some input $X_L \in L^2(\Omega, H_L)$.
As a consequence, the MLMC estimator defined in \cref{eq:mlmc_estimator} cannot be used directly, but transfer operators may be introduced to transfer inputs and outputs between the highest-fidelity space $H_L$ and lower-fidelity spaces $H_\ell$ with $\ell = 0,\ldots, L-1$.
Operators that transfer a discretized field from a level $\ell$ to a higher-fidelity level $\ell' > \ell$ will be referred to as prolongation operators.
We denote by 
$P_{\ell}^{\ell'} \colon H_{\ell} \to H_{\ell'}$
the prolongation operator from level $\ell$ to a higher-fidelity level $\ell' > \ell$.
In addition, restriction operators $R_{\ell'}^{\ell} \colon H_{\ell'} \to H_{\ell}$ are used to perform the ``reverse'' operation of transferring  discretized fields defined on level $\ell'$ to a lower-fidelity level $\ell < \ell'$.
We may now define $f_\ell$ from $\tilde{f}_\ell$ as 
\begin{equation}\label{eq:f_simu_restr_prolong_fine}
    f_{\ell} = P_{\ell}^L \circ \tilde{f}_{\ell} \circ R_L^{\ell} \colon H_L \to H_L,
\end{equation}
for $\ell=0, \ldots, L$.

The numerical simulators in \cref{eq:f_simu_restr_prolong_fine} involve transfer operators between the highest-fidelity level $L$ and lower-fidelity levels.
In practice, grid transfer operators between arbitrary levels can be defined as the composition of transfer operators between successive levels, as
\begin{equation}\label{eq:composed_restr_prolong_general}
\begin{split}
    \forall \ell = 0, \ldots, L-1,
    \quad
    \forall \ell' = \ell+1, \ldots, L,
    \quad
    P_{\ell}^{\ell'} & 
    \bydef 
    P^{\ell'}_{\ell'-1} \circ P^{\ell'-1}_{\ell'-2} 
    \circ \cdots \circ 
    P^{\ell+2}_{\ell+1} \circ P^{\ell+1}_{\ell}, \\
    R^{\ell}_{\ell'} &
    \bydef 
    R^{\ell}_{\ell+1} \circ R^{\ell+1}_{\ell+2} 
    \circ \cdots \circ
    R^{\ell'-2}_{\ell'-1} \circ R^{\ell'-1}_{\ell'},
\end{split}
\end{equation}
and, for $\ell=0,\ldots,L$, 
$R_\ell^\ell = P_\ell^\ell = \id_\ell$, 
where $\id_\ell \colon H_\ell \to H_\ell$
is the identity operator on $H_\ell$.
We further assume that, for any $X \in L^2(\Omega, H_\ell)$,
\begin{itemize}
    \item $\tilde{f}_\ell(X) \in L^2(\Omega, H_\ell)$, for $\ell=0,\ldots,L$;
    \item $P_{\ell}^{\ell+1}(X) \in L^2(\Omega, H_{\ell+1})$, for $\ell=0,\ldots,L-1$;
    \item $R_{\ell}^{\ell-1}(X) \in L^2(\Omega, H_{\ell-1})$, for $\ell=1,\ldots,L$.
\end{itemize}
It follows that, for any $X_L \in L^2(\Omega, H_L)$ and for $\ell=0,\ldots,L$, $f_\ell(X_L) \in L^2(\Omega, H_L)$.
Consequently, the MLMC expression \cref{eq:mlmc_estimator} can now be used with $H = H_L$ and $X=X_L \in L^2(\Omega, H_L)$.
The resulting MLMC formulation, i.e., based on transferred versions $f_\ell$ of the simulators $\tilde{f}_\ell$ as defined in \cref{eq:f_simu_restr_prolong_fine} and composed transfer operators as in \cref{eq:composed_restr_prolong_general}, constitutes the core of the proposed methodology, and will be the focus of the remainder of this paper, in a discrete, finite-dimensional setting described in the next section.

\subsection{Discrete finite-dimensional spaces}\label{sec:discrete_representation}

In this paper, we focus on numerical simulators that arise from the discrete approximation of the strong form of elliptic PDEs, i.e., through the direct discretization of the differential operators involved in such PDEs.
This is the case, for instance, in the finite difference (FD) or finite volume (FV) discretization of a PDE.
This choice is motivated by the considered application, presented in \cref{sec:problem_presentation}, and more importantly by the target ocean numerical model~\cite{Madec2023_NEMOOceanEngine}, which relies on an FD-like discretization.
Consequently, input and ouput discretized fields of such simulators may only be evaluated at a finite number of discrete locations, and are thus represented as finite-dimensional real vectors, whose size depends on the number of discretization points in the spatial domain of interest $\mathcal{D}$.
Formally, we consider a polytopal tessellation $\mathcal{T}$ of a polytopal domain $\mathcal{D}\subset \R^d$, with $d \in \{1,2,3\}$.
We refer to elements of $\mathcal{T}$ as cells, and we denote by $\lvert \mathcal{T} \vert$ the cardinality, or size, of $\mathcal{T}$, i.e., its number of cells.

We now describe the MLMC hierarchy of simulators (and corresponding Hilbert spaces) for this discretization setting.
Let $(\mathcal{T}_{\ell})^L_{\ell=0}$ be a sequence of $L+1$ tessellations of $\mathcal{D}$ of increasing size $n_\ell \bydef \lvert \mathcal{T}_{\ell} \rvert$, for $\ell=0,\ldots, L$.
These tessellations of different sizes define the geometric hierarchy of levels of the MLMC approach.
Specifically, $\mathcal{T}_0$ is the coarsest tessellation corresponding to the lowest-fidelity level, while $\mathcal{T}_L$ is the finest tessellation corresponding to the highest-fidelity level. 
To each level $\ell=0,\ldots,L$, we associate the symmetric positive definite (SPD) Gram matrix 
$\bW_{\ell} 
\in \R^{n_\ell \times n_\ell}$, which encodes the structural information related to the discrete approximation of scalar fields as vectors of $\mathbb{R}^{n_{\ell}}$ on the corresponding cell-centered discretization on $\mathcal{T}_\ell$.
We then define
the weighted inner product $\langle \cdot , \cdot \rangle_{\bW_{\ell}}$ between elements of $\R^{n_\ell}$ as
\begin{equation}\label{eq:def_inner_prod}
    \forall \mathbf{u}, \mathbf{v} \in \mathbb{R}^{n_{\ell}},
    \quad
    \langle \mathbf{u}, \mathbf{v} \rangle_{\bW_{\ell}}
    = \mathbf{u}^{\transp} \bW_{\ell} \mathbf{v} 
    = \langle \bV_{\ell}^{\transp}\mathbf{u}, \bV_{\ell}^{\transp}\mathbf{v} \rangle_{\bI_{n_\ell}},
\end{equation}
where 
$\bV_{\ell} \in \R^{n_\ell \times n_\ell}$ is an invertible matrix such that $\bW_{\ell} = \bV_{\ell} \bV_{\ell}^{\transp}$, which exists since $\bW_{\ell}$ is SPD, and where
$\langle \cdot , \cdot \rangle_{\bI_{n_\ell}}$ denotes the canonical (Euclidean) dot product between vectors of $\R^{n_\ell}$.
The inner product space $H_\ell \bydef (\R^{n_\ell}, \langle \cdot , \cdot \rangle_{\bW_{\ell}})$ is a separable Hilbert space, so that the multilevel hierarchy of simulators $(\tilde{f}_\ell \colon H_\ell \to H_\ell)_{\ell=0}^L$ fits in the framework described in \cref{sec:MLMC_inconsistent}.
For clarity, we shall from here on simply write $\R^{n_\ell}$ for $(\R^{n_\ell}, \langle \cdot , \cdot \rangle_{\bW_{\ell}})$, and, unless explicitly stated otherwise, use $\langle \cdot , \cdot \rangle_{\bW_{\ell}}$ as the default inner product between vectors of $\R^{n_\ell}$.
Furthermore, we shall denote by $\| \cdot \|_{\bW_{\ell}}$ the norm induced by $\langle \cdot , \cdot \rangle_{\bW_{\ell}}$.
In this discrete setting, the MLMC estimator defined in \cref{eq:mlmc_estimator} becomes
\begin{equation}\label{eq:mlmc_estimator_discrete} 
    \muMLMC_L
    = 
    \frac{1}{M_0} \sum_{i=1}^{M_0} f_{0} ( \bX_L^{(0,i)} ) 
    +
    \sum_{\ell=1}^L 
    \frac{1}{M_{\ell}} 
    \sum_{i=1}^{M_{\ell}} 
    \left[ f_{\ell} ( \bX_L^{(\ell,i)} ) - f_{\ell-1} ( \bX_L^{(\ell,i)} ) \right],
\end{equation}
with the grid transferred version of the simulators defined as in \cref{eq:f_simu_restr_prolong_fine}.
It should be remarked that the $L+1$ independent samples $(\{\bX_L^{(\ell,i)}\}_{i=1}^{M_\ell})_{\ell=0}^L$ of $\bX_L$ are all generated on the finest grid.

In the functional framework that is typically employed for simulators whose inputs and/or outputs are discretized fields
  \cite{cliffe_multilevel_2011,
  Barth2011_MultilevelMonteCarlo,
  Mishra2012_SparseTensorMultilevel,
  Mishra2012_MultilevelMonteCarlo,
  Abdulle2013_MultilevelMonteCarlo,
  Gittelson2013_MultilevelMonteCarlo,
  Teckentrup2013_FurtherAnalysisMultilevel,
  Charrier2013_FiniteElementError,
  Istratuca2025_SmoothedCirculantEmbedding},
grid transfer operators are implicitly employed, but usually not explicitly defined or even mentioned.
This is especially the case for the prolongation operator, since discretized fields are defined everywhere in the spatial domain through their functional representation. 
Nevertheless, the practical computation of the multilevel correction terms in \cref{eq:mlmc_estimator_discrete} implicitly requires the introduction of prolongation operators.
By focusing on the discrete framework, we explicitly exhibit the grid transfer operators and study their effect on the quality of the multilevel estimator.
We shall see that the operators that seem the most natural in the functional framework, such as the canonical injection for the prolongation operator in settings with nested approximation spaces, actually yield multilevel estimators with poor properties.
This is particularly true for cell-centered discretizations, which are the focus of this paper (see \cref{sec:pract-cons}), and which are also known in the multigrid community for causing similar difficulties~\cite{Wesseling1988_CellcenteredMultigridInterface, Hemker1990_OrderProlongationsRestrictions, khalil_vertex-centered_1992, Mohr2004_CellcentredMultigridRevisited}.

\subsection{The filtered MLMC method}\label{sec:fmlmc}

In this paper, we are particularly concerned with the impact of the grid transfer operators on the representation of the different scales (or frequencies) of the multilevel correction fields.
While it is clear that, owing to the telescoping correction mechanism, the bias of the multilevel estimator \eqref{eq:mlmc_estimator_discrete} is unaffected, the impact on its variance, and hence on the MSE, remains to be investigated.
As will be illustrated in \cref{sec:spectral_analysis,sec:experiments}, the introduction of grid transfer operators induces a deterioration of the multilevel estimator's variance in the high frequencies. 
Inspired by multigrid methods~\cite{trottenberg_multigrid_2000, briggs_multigrid_2000, wienands_practical_2004}, we propose an improvement of the MLMC estimator for discretized random fields by adding pre- and post-filtering (or, in multigrid terminology, smoothing). 
The objective is to filter out the smaller scales, which cannot be represented on the coarse grids, before using a restriction operator and after using a prolongation operator.
The idea of filtering highly oscillatory input fields in an MLMC framework has been proposed in \cite{Teckentrup2013_FurtherAnalysisMultilevel, Istratuca2025_SmoothedCirculantEmbedding}, for simulators with scalar outputs.
This filtering is achieved through spectral truncation in the Karhunen-Lo\`eve~\cite{Kac1947_ExplicitRepresentationStationary,LeMaitre2010_SpectralMethodsUncertainty,Betz2014_NumericalMethodsDiscretization} or circulant embedding~\cite{Graham2011_QuasiMonteCarloMethods,Graham2018_AnalysisCirculantEmbedding} framework.

In this paper, we propose a general filtering formalism for the abstract, grid-transferred MLMC framework \eqref{eq:mlmc_estimator_discrete}.
Because the outputs of the simulators considered here are discretized fields, we also apply the filtering process to the outputs of coarser levels after they have been prolongated to the fine grid.
In the proposed formalism, filtering the small-scale components out of a signal defined on level $\ell$ is achieved using a low-pass pre- or post-filtering operator $S^{\pre/\post}_{\ell} \colon \mathbb{R}^{n_{\ell}} \to \mathbb{R}^{n_{\ell}}$.
These filters can then be composed with the grid transfer operators between successive levels to obtain pre-filtered restriction operators and post-filtered prolongation operators,
\begin{equation}
  \label{eq:filtered_restr_prolong}
  \forall \ell=1,\ldots,L,
  \qquad
  \bar{R}_{\ell}^{\ell-1} \bydef R_{\ell}^{\ell-1} \circ S^{\pre}_{\ell},
  \mbox{ and }
  \bar{P}_{\ell-1}^{\ell} \bydef S^{\post}_{\ell} \circ P_{\ell-1}^{\ell}.
\end{equation}
Similar to \cref{eq:composed_restr_prolong_general}, filtered grid transfer operators between arbitrary levels can be defined as the composition of transfer operators between successive levels, as
\begin{equation}\label{eq:composed_filt_restr_prolong_general}
\begin{split}
    \forall \ell = 0, \ldots, L-1,
    \quad
    \forall \ell' = \ell+1, \ldots, L,
    \quad
    \bar{P}_{\ell}^{\ell'} & 
    \bydef 
    \bar{P}^{\ell'}_{\ell'-1} \circ \bar{P}^{\ell'-1}_{\ell'-2} 
    \circ \cdots \circ 
    \bar{P}^{\ell+2}_{\ell+1} \circ \bar{P}^{\ell+1}_{\ell}, \\
    \bar{R}^{\ell}_{\ell'} &
    \bydef 
    \bar{R}^{\ell}_{\ell+1} \circ \bar{R}^{\ell+1}_{\ell+2} 
    \circ \cdots \circ
    \bar{R}^{\ell'-2}_{\ell'-1} \circ \bar{R}^{\ell'-1}_{\ell'},
\end{split}
\end{equation}
and, for $\ell=0,\ldots,L$, 
$\bar{R}_\ell^\ell = \bar{P}_\ell^\ell = \id_\ell$.
A new (filtered) simulator $\bar{f}_{\ell}$ is then defined on each level $\ell$ as
\begin{equation}
  \label{eq:f_simu_filt_trans}
  \bar{f}_{\ell} = \bar{P}_{\ell}^L \circ \tilde{f}_{\ell} \circ \bar{R}_L^{\ell},
\end{equation}
so that the filtered MLMC (F-MLMC) estimator is obtained by replacing $f_\ell$ with $\bar{f}_\ell$ in  \cref{eq:mlmc_estimator_discrete}:
\begin{equation}\label{eq:fmlmc_estimator} 
    \muFMLMC_L
    = 
    \frac{1}{M_0} \sum_{i=1}^{M_0} \bar{f}_{0} ( \bX_L^{(0,i)} ) 
    +
    \sum_{\ell=1}^L 
    \frac{1}{M_{\ell}} 
    \sum_{i=1}^{M_{\ell}} 
    \left[ \bar{f}_{\ell} ( \bX_L^{(\ell,i)} ) - \bar{f}_{\ell-1} ( \bX_L^{(\ell,i)} ) \right].
\end{equation}

\subsection{Practical considerations}\label{sec:pract-cons}

While the proposed methodology solely relies on \cref{eq:f_simu_restr_prolong_fine,eq:mlmc_estimator_discrete} for the unfiltered MLMC estimator, and on \cref{eq:filtered_restr_prolong,eq:composed_filt_restr_prolong_general,eq:f_simu_filt_trans,eq:fmlmc_estimator} for the F-MLMC estimator, we describe in this section further practical considerations that will be assumed for the remainder of this paper.

\subsubsection{Discrete setting and MLMC hierarchy}

In what follows, we consider cell-centered discretizations, meaning that the discretization points are located at the center of the cells of a tessellation $\mathcal{T}$.
It should be noted that functional representations can be deduced \emph{a posteriori} from such discretized fields.
In particular, if $\bu \in \R^n$ is a cell-centered discretized field on a tessellation $\mathcal{T} \bydef \{T_i\}_{i=1}^n$, a natural functional representation is a piecewise constant field $u \in \mathbb{P}^0$, where $\mathbb{P}^0 \bydef \Span(\{\indic_{T_i}\}_{i=1}^n)$ and $\indic_T \colon \mathcal{D} \to \{0,1\}$ denotes the indicator function associated with the subset $T \subseteq \mathcal{D}$. 
The piecewise constant representation can then be defined through $\bu \in \R^n$ as $u=\bu^{\transp} \bm{\phi}$, where $\bm{\phi} \bydef [\indic_{T_1}, \ldots, \indic_{T_n}]^{\transp}$.

In the remainder of this paper, we prescribe a fixed finest level $L$.
This is consistent with the fact that, in complex operational settings, the high-fidelity level is typically dictated by the application.
We further assume that the tessellations of the grid hierarchy $(\mathcal{T}_\ell)_{\ell=0}^L$ are nested, i.e., for $\ell=1,\ldots,L$, for any $T \in \mathcal{T}_\ell$, there is a unique $T' \in \mathcal{T}_{\ell-1}$ such that $T \subset T'$.
We note, however, that the sets of discretization points, i.e., the sets of cell centers, are not nested.
It should be stressed that the nestedness of the tessellations is not strictly required, but facilitates the design of transfer operators.
Non-nested grid hierarchies would call for the design of more complex, and possibly more expensive, transfer operators.

\subsubsection{Optimal sample allocation}
The sample sizes $M_\ell$ are optimally allocated across levels such that, for a given computational budget $\mathcal{C}$, the variance of the  multilevel estimator is minimal~\cite{mycek_multilevel_2019},
\begin{equation}\label{eq:optimal_sample_alloc}
    M_{\ell} 
    =
    \left\lfloor 
        \frac{\mathcal{C}}{\mathcal{S}_L} 
        \sqrt{\frac{\mathcal{V}_{\ell}}{\mathcal{C}_{\ell} + \mathcal{C}_{\ell-1}}} 
    \right\rfloor^+,
    \qquad
    \mathcal{V}_{\ell} 
    \bydef 
    \mathcal{V}( \bar{f}_{\ell}(\bX_L) - \bar{f}_{\ell-1}(\bX_L) ),
    \qquad
    \mathcal{S}_L 
    \bydef 
    \sum_{\ell=0}^L \sqrt{\mathcal{V}_{\ell} (\mathcal{C}_{\ell} + \mathcal{C}_{\ell-1})},
\end{equation}
where $\mathcal{C}_{\ell}$ denotes the mean computational cost of evaluating $\bar{f}_\ell(\bX_L)$ 
and $\lfloor \cdot \rfloor^+ \bydef \max(1, \lfloor \cdot \rfloor)$, where $\lfloor \cdot \rfloor$ denotes the floor function.
By convention, the quantities indexed by $\ell=-1$ vanish, so that $\mathcal{C}_{-1} = 0$ and $\mathcal{V}_{0} = \mathcal{V}(\bar{f}_0(\bX_L))$.
Furthermore, for conciseness, we have used the same notation $\bar{f}_\ell$ to refer either to the filtered version of $f_\ell$ (i.e., $\bar{f}_\ell$ as defined in \cref{sec:fmlmc}) when considering F-MLMC, or to its unfiltered counterpart (i.e., simply $f_\ell$) when dealing with unfiltered MLMC.
As a matter of fact, the latter is a special case of the former, with identity operators as filters.
For a fixed finest level $L$, it is easy to see from the optimal sample allocation \eqref{eq:optimal_sample_alloc} (ignoring the rounding of $M_\ell$) that the variance of the resulting multilevel estimator is $\mathcal{S}_L^2 / \mathcal{C}$, while the variance of the single-level, high-fidelity MC estimator at equivalent cost is $\mathcal{C}_L\mathcal{V}(f_L(\bX_L)) / \mathcal{C}$.
This shows that, while the variance of the multilevel estimator is reduced (regardless of the budget) provided that $\mathcal{S}_L^2 < \mathcal{C}_L\mathcal{V}(f_L(\bX_L))$, the convergence rate remains unchanged compared to a standard single-level MC estimator, which is typical of multifidelity methods where $L$ is fixed (see, e.g., \cite{ElAmri2024_MultilevelSurrogatebasedControl}).
It should be noted that, because of the introduction of grid transfer operations, the level-wise cost $\mathcal{C}_\ell$ can be decomposed into two contributions, specifically, the cost $\tilde{\mathcal{C}}_\ell$ of evaluating the simulator $\tilde{f}_\ell$
on the one hand, and the cost $\mathcal{C}_\ell^{\transf}$ of applying the (possibly filtered) grid transfer operators on the other hand.

\subsubsection{Unfiltered grid transfer operators}

In practical applications, the transfer operators introduced in \cref{sec:MLMC_inconsistent} are typically chosen to be linear.
A linear prolongation operator $P_{\ell}^{\ell'}$ may be identified with the matrix $\bP_{\ell}^{\ell'} \in \mathbb{R}^{n_{\ell'} \times n_{\ell}}$ such that $P_{\ell}^{\ell'}(\mathbf{x}_{\ell}) = \bP_{\ell}^{\ell'} \mathbf{x}_{\ell}$ for any $\mathbf{x}_{\ell} \in \mathbb{R}^{n_{\ell}}$,
and, likewise, a linear restriction operator $R_{\ell'}^{\ell}$ may be identified with the appropriate matrix $\bR_{\ell'}^{\ell} \in \mathbb{R}^{n_{\ell} \times n_{\ell'}}$.
In the remainder of this paper, we shall only consider linear transfer operators, and hence employ the associated matrix notation.
Then, \cref{eq:composed_restr_prolong_general} implies
\begin{equation}\label{eq:composed_restr_prolong}
    \forall \ell = 0, \ldots, L-1,
    \quad
    \bP_{\ell}^L \bydef \bP^L_{L-1} \cdots \bP^{\ell+2}_{\ell+1} \bP^{\ell+1}_{\ell}
    \mbox{ and }
    \bR^{\ell}_L \bydef \bR^{\ell}_{\ell+1} \cdots \bR^{L-2}_{L-1} \bR^{L-1}_L,
  \end{equation}
and $\bR_L^L = \bP_L^L = \bI_{n_L}$.
Consequently, the MLMC estimator \eqref{eq:mlmc_estimator_discrete} can be written as
\begin{align}\label{eq:mlmc_estimator_transfer}
    \muMLMC_L
    & = 
    \frac{1}{M_0} \sum_{i=1}^{M_0} 
    \bP_0^L \tilde{f}_{0} ( \bR_L^0 \bX_L^{(0,i)} ) 
    +
    \sum_{\ell=1}^L \frac{1}{M_{\ell}} \sum_{i=1}^{M_{\ell}} 
    \left[ 
        \bP_\ell^L \tilde{f}_{\ell} ( \bR_L^{\ell} \bX_L^{(\ell,i)} )
        - 
        \bP_{\ell-1}^L \tilde{f}_{\ell-1} ( \bR_L^{\ell-1} \bX_L^{(\ell,i)} )
    \right],\\ \label{eq:mlmc_estimator_transfer_v2}
    & = 
    \frac{1}{M_0} \bP_0^L 
    \sum_{i=1}^{M_0} \tilde{f}_{0} ( \bR_L^0 \bX_L^{(0,i)} ) 
    +
    \sum_{\ell=1}^L \frac{1}{M_{\ell}} \bP_{\ell}^L
    \sum_{i=1}^{M_{\ell}} \left[ 
        \tilde{f}_{\ell} ( \bR_L^{\ell} \bX_L^{(\ell,i)} )
        - 
        \bP_{\ell-1}^{\ell} \tilde{f}_{\ell-1} ( \bR_L^{\ell-1} \bX_L^{(\ell,i)} )
    \right],
\end{align}
where the second identity is obtained by exploiting the linearity of the prolongation operators and the composition definition in \cref{eq:composed_restr_prolong}.
The latter is to be preferred in practical implementations, as it involves fewer applications of the prolongation operators.
Finally, in the remainder of this paper, we shall consider prolongation operators satisfying
\begin{equation}\label{eq:prop_restr_prolong}
    \forall \ell = 0, \ldots, L-1,
    \quad
    (\bP_{\ell}^{\ell+1})^{\transp} \bW_{\ell+1}  \bP_{\ell}^{\ell+1}  
    = 
    \bW_{\ell}.
\end{equation}
A major consequence of \cref{eq:prop_restr_prolong} is that
\begin{equation}
    \forall \ell=0, \ldots, L-1,
    \;
    \forall \bx_{\ell} \in \R^{n_{\ell}},
    \quad
    \|\bP_{\ell}^{\ell+1} \bx_{\ell}\|_{\bW_{\ell+1}}
    =
    \bx_{\ell}^{\transp} (\bP_{\ell}^{\ell+1})^{\transp} \bW_{\ell+1} \bP_{\ell}^{\ell+1} \bx_{\ell} 
    =
    \bx_{\ell}^{\transp} \bW_{\ell} \bx_{\ell}
    = 
    \|\bx_{\ell}\|_{\bW_{\ell}},
\end{equation}
indicating that the prolongation operator is norm-preserving.
We note that, by \cref{eq:composed_restr_prolong}, similar properties then hold for transfer operators between arbitrary levels.

\subsubsection{Filtering operators and filtered grid transfer operators}

Without loss of generality, we shall use the same operator $S_{\ell} = S^{\pre}_\ell=S^{\post}_\ell$ for both pre- and post-filtering on each level.
Moreover, we shall use linear filtering operators $S_\ell$, which can thus be identified with matrices $\bS_\ell$, so that the filtered grid transfer operators remain linear.
The resulting filtered grid transfer operators $\bar{P}_{\ell}^L$ and $\bar{R}_L^{\ell}$ are then defined through their corresponding matrices $\bar{\bP}_{\ell}^L$ and $\bar{\bR}^{\ell}_L$ by
\begin{equation}\label{eq:composed_restr_prolong_fmlmc}
    \forall \ell = 0, \ldots, L-1,
    \quad
    \bar{\bP}_{\ell}^L \bydef \bar{\bP}^L_{L-1} \cdots \bar{\bP}^{\ell+2}_{\ell+1} \bar{\bP}^{\ell+1}_{\ell}
    \mbox{ and }
    \bar{\bR}^{\ell}_L \bydef \bar{\bR}^{\ell}_{\ell+1} \cdots \bar{\bR}^{L-2}_{L-1} \bar{\bR}^{L-1}_L,
\end{equation}
where
\begin{equation}
  \forall \ell = 1, \ldots, L,
  \quad
  \bar{\bR}_{\ell}^{\ell-1} 
  \bydef \bR_{\ell}^{\ell-1} \bS_{\ell},
  \quad \text{and} \quad
  \bar{\bP}_{\ell-1}^{\ell}
  \bydef \bS_{\ell} \bP_{\ell-1}^{\ell},
\end{equation}
and $\bar{\bR}_L^L = \bar{\bP}_L^L = \bI_{n_L}$.
The F-MLMC estimator then reads
\begin{align}\label{eq:fmlmc_estimator_transfer}
    \muFMLMC_L
    & = 
    \frac{1}{M_0} \sum_{i=1}^{M_0} 
    \bar{\bP}_0^L \tilde{f}_{0} ( \bar{\bR}_L^0 \bX_L^{(0,i)} ) 
    +
    \sum_{\ell=1}^L \frac{1}{M_{\ell}} \sum_{i=1}^{M_{\ell}} 
    \left[ 
        \bar{\bP}_\ell^L \tilde{f}_{\ell} ( \bar{\bR}_L^{\ell} \bX_L^{(\ell,i)} )
        - 
        \bar{\bP}_{\ell-1}^L \tilde{f}_{\ell-1} ( \bar{\bR}_L^{\ell-1} \bX_L^{(\ell,i)} )
    \right],\\ \label{eq:fmlmc_estimator_transfer_v2}
    & = 
    \frac{1}{M_0} \bar{\bP}_0^L 
    \sum_{i=1}^{M_0} \tilde{f}_{0} ( \bar{\bR}_L^0 \bX_L^{(0,i)} ) 
    +
    \sum_{\ell=1}^L \frac{1}{M_{\ell}} \bar{\bP}_{\ell}^L
    \sum_{i=1}^{M_{\ell}} \left[ 
        \tilde{f}_{\ell} ( \bar{\bR}_L^{\ell} \bX_L^{(\ell,i)} )
        - 
        \bar{\bP}_{\ell-1}^{\ell} \tilde{f}_{\ell-1} ( \bar{\bR}_L^{\ell-1} \bX_L^{(\ell,i)} )
    \right].
\end{align}
Again, we note that the unfiltered MLMC estimator corresponds to a special case of the filtered MLMC estimator with identity filtering operators.
However, we note that, in general, the filtered transfer operators defined in \cref{sec:fmlmc} no longer satisfy the norm-preserving property of \cref{eq:prop_restr_prolong}.
It should be stressed that the proposed methodology remains valid with non-linear (filtered or unfiltered) transfer operators; in particular, the telescoping sum property still guarantees the unbiasedness of the multilevel estimators in \cref{eq:mlmc_estimator_discrete,eq:fmlmc_estimator}.

\section{Spectral analysis}\label{sec:spectral_analysis}

In this section, we conduct a spectral analysis of the MLMC estimator \eqref{eq:mlmc_estimator_transfer} and of the F-MLMC estimator \eqref{eq:fmlmc_estimator_transfer} to study more closely the effects of grid transfer operators on their variance at different spatial scales.
The analysis is performed in a one-dimensional spatial domain, with cell-centered discretized input and output fields on uniform, nested grids.
The setting is further simplified to the case where the numerical simulators are linear, and that their associated matrices are symmetric and circulant.

\subsection{Linear simulators}\label{sec:var_exp_mlmc}

We start by considering linear simulators of the form $\tilde{f}_\ell \colon \bx_\ell \mapsto \tilde{\bF}_\ell \bx_\ell$, for $\ell=0,\ldots,L$, where $\tilde{\bF}_{\ell} \in \mathbb{R}^{n_{\ell} \times n_{\ell}}$, and, consistently with \cref{eq:f_simu_restr_prolong_fine,eq:mlmc_estimator_transfer}, we define $\bF_\ell \bydef \bP^L_{\ell} \tilde{\bF}_{\ell} \bR^{\ell}_L \in \R^{n_L \times n_L}$,
with $\bP^L_L = \bR^L_L = \bI_{n_L}$.
For now, we make no further assumption on $\tilde{\bF}_{\ell}$.
Then, by linearity of the expectation operator,
the variance $\mathcal{V}( \muMLMC_L)$ of the multilevel estimator \eqref{eq:mlmc_estimator_transfer}, given by \cref{eq:var_EMLMC_vector}, becomes
\begin{equation}\label{eq:variance_mlmc_linear_models}
    \mathcal{V}( \muMLMC_L)
    = 
    \frac{1}{M_0} \Exp[ \| \bF_0 \dot{\bX}_L \|^2_{\bW_L} ] 
    + 
    \sum^L_{\ell=1} \frac{1}{M_{\ell}} 
    \Exp[ \| 
        ( \bF_{\ell} - \bF_{\ell-1} ) \dot{\bX}_L 
    \|^2_{\bW_L} ],
\end{equation}
where $\dot{\bX}_L \bydef \bX_L - \Exp[\bX_L]$.
Now, for any matrix $\bF\in\R^{n_L \times n_L}$,
\begin{equation}
\begin{split}
    \Exp[ \| \bF\dot{\bX}_L \|^2_{\bW_L}]
    & = \Exp[ (\bF\dot{\bX}_L)^{\transp} \bW_L (\bF\dot{\bX}_L) ]
    = \tr \Exp[ \dot{\bX}_L^{\transp} \bF^{\transp} \bW_L \bF\dot{\bX}_L ]
    = \Exp[ \tr( \dot{\bX}_L^{\transp} \bF^{\transp} \bW_L \bF\dot{\bX}_L ) ] \\
    & = \Exp[ \tr( \bF^{\transp} \bW_L \bF \dot{\bX}_L \dot{\bX}_L^{\transp} ) ]
    = \tr( \bF^{\transp} \bW_L \bF \Exp[\dot{\bX}_L \dot{\bX}_L^{\transp}] )
    = \tr( \bF^{\transp} \bW_L \bF \bG) \\
    & = \tr( (\bF \bG^{1/2})^{\transp} \bW_L (\bF \bG^{1/2}) )
    = \| \bF \bG^{1/2} \|^2_{F,\bW_L}, 
\end{split}
\end{equation}
where $\tr(\cdot)$ denotes the matrix trace operator, $\| \cdot \|_{F,\bW} \colon \bF \mapsto \operatorname{tr}(\bF^{\transp} \bW \bF)^{1/2}$ denotes the $\bW$-weighted Frobenius norm for any SPD weighting matrix $\bW$, and $\bG \bydef \Exp[ \dot{\bX}_L \dot{\bX}_L^{\transp}] = \bG^{1/2}(\bG^{1/2})^{\transp}$ denotes the covariance matrix of $\bX_L$.
Thus, \cref{eq:variance_mlmc_linear_models} reduces to
\begin{equation}\label{eq:variance_mlmc_linear_models_frob}
    \mathcal{V}( \muMLMC_L)
    = 
    \frac{1}{M_0} \| \bF_0 \bG^{1/2} \|^2_{F,\bW_L}  
    + 
    \sum^L_{\ell=1} \frac{1}{M_{\ell}} 
    \| 
        ( \bF_{\ell} - \bF_{\ell-1} ) \bG^{1/2} 
    \|^2_{F,\bW_L},
\end{equation}
which emphasizes that the variance reduction is closely related to the similarity of successive fidelity models.
Furthermore, for any orthogonal matrix $\bQ\in\R^{n_L \times n_L}$ with respect to the weighted inner product $\langle \cdot, \cdot\rangle_{\bW_{L}}$, i.e., such that $\bQ \bW_{L} \bQ^{\transp}= \bQ^{\transp} \bW_{L} \bQ =\bI_{n_L}$, we have, for any $\bx\in\R^{n_L}$,
\begin{equation}\label{eq:prop_norms_orthog_proj}
    \| \bQ^{\transp} \bV^{\transp} \bx \|_{\bW_L} = \| \bQ \bV^{\transp} \bx \|_{\bW_L} = \| \bx \|_{\bW_L},
    \quad \mbox{and }
    \| \bQ^{\transp} \bx \|_{\bW_L} = \| \bQ \bx \|_{\bW_L} = \| \bx \|_{\bI_{n_L}},
\end{equation}
and, similarly, for any $\bF\in\R^{n_L \times n_L}$,
\begin{equation}\label{eq:prop_norms_orthog_proj_frob}
    \| \bQ^{\transp} \bV^{\transp} \bF \|_{F,\bW_L} = \| \bQ \bV^{\transp} \bF \|_{F,\bW_L} = \| \bF \|_{F,\bW_L},
    \quad \mbox{and }
    \| \bQ^{\transp} \bF \|_{F, \bW_L} = \| \bQ \bF \|_{F, \bW_L} = \| \bF \|_{F, \bI_{n_L}}.
\end{equation}

\subsection{One-dimensional spatial setting}\label{sec:spectral_analysis_1d_setting}

The analysis is conducted in a one-dimensional (1D) setting with periodic, cell-centered discretized input and output random fields defined on a unit spatial domain $\mathcal{D} \bydef [0,1) \subset \R$. 
Specifically, we consider a hierarchy of uniform grids $(\mathcal{T}_\ell)_{\ell=0}^L$, where $\mathcal{T}_{\ell} \bydef \{T_{\ell, i}\}_{i=1}^{n_\ell}$ and $T_{\ell,i} \bydef [(i-1)/n_\ell, i/n_\ell)$ on level $\ell$,
along with the associated Gram matrices $\bW_{\ell} = n_{\ell}^{-1} \bI_{n_{\ell}}$.
It follows immediately that $\bV_{\ell} = n_{\ell}^{-1/2} \bI_{n_{\ell}} = \bW_{\ell}^{1/2}$. 
For $\ell = 0, \dots, L-1$, we set a constant refinement factor $n_{\ell+1}/n_{\ell} = 2$, so that $n_{\ell} = 2^{\ell-L} n_L$.
The inter-level transfer operators are defined as
\begin{equation}\label{eq:def_prolongation_restriction}
    \forall \ell = 0, \ldots, L-1,
    \qquad
    \bP_{\ell}^{\ell+1} \bydef
    \begin{bmatrix}
        1 & & \\
        1 & & \\
        & 1 & \\
        & 1 & \\
        & & \ddots
    \end{bmatrix} \in \mathbb{R}^{n_{\ell+1} \times n_{\ell}},
    \mbox{ and }
    \bR_{\ell+1}^{\ell}= (\bP_{\ell}^{\ell+1})^{\transp}
    \in \mathbb{R}^{n_{\ell} \times n_{\ell+1}}.
\end{equation}
Note that the prolongation operator in \cref{eq:def_prolongation_restriction} induces the canonical injection for the functional, piecewise constant representation mentioned in \cref{sec:discrete_representation}.
Indeed, letting $\mathbb{P}^0_{\ell} \bydef \Span(\{\indic_{T_{\ell,i}}\}_{i=1}^{n_\ell})$ denote the space of piecewise constant functions on $\mathcal{T}_\ell$ and letting $\imath_{\ell}^{\ell+1} \colon \mathbb{P}^0_{\ell}  \hookrightarrow \mathbb{P}^0_{\ell+1}$ denote the canonical injection from $\mathbb{P}^0_{\ell}$ to $\mathbb{P}^0_{\ell+1}$, then, for any $u_\ell = \bx^{\transp} \bm{\phi}_{\ell} \in \mathbb{P}^0_{\ell}$ with $\bm{\phi}_{\ell} \bydef [\indic_{T_{\ell, 1}}, \ldots, \indic_{T_{\ell, n_\ell}}]^{\transp}$, $\imath_{\ell}^{\ell+1}(u_\ell) = \bx_{\ell+1}^{\transp} \bm{\phi}_{\ell+1}$ with $\bx_{\ell+1} = \bP_{\ell}^{\ell+1}\bx_\ell$.
This prolongation operator satisfies \cref{eq:prop_restr_prolong}, which, in this setting, amounts to 
$(\bP_{\ell}^{\ell+1})^{\transp} \bP_{\ell}^{\ell+1} 
= 2 \bI_{n_{\ell}}$,
for $\ell=0,\ldots,L-1$.
Then, grid transfer operators between arbitrary levels are defined by composition of successive inter-level operators as in \cref{eq:composed_restr_prolong}.

\subsection{The Hartley basis}\label{sec:hartley-basis}

The spectral analysis conducted below relies on the discrete Hartley basis~\cite{Hartley1942_MoreSymmetricalFourier, Bracewell1983_DiscreteHartleyTransform, bini_matrix_1993}, adapted here to cell-centered discretized fields.
The Hartley basis is a Fourier-like basis, commonly used in circulant embedding techniques for generating stationary Gaussian random fields with prescribed covariance structure \cite{Graham2011_QuasiMonteCarloMethods,Graham2018_AnalysisCirculantEmbedding,Istratuca2025_SmoothedCirculantEmbedding}.
Its main advantage is that it consists of purely real basis vectors, as opposed to the Fourier basis, whose basis vectors are complex, thus easing interpretation and visualization.
On level $\ell$, the $n_\ell$ cell-centered Hartley basis vectors  
$\{\bh^{\ell}_k\}_{k=0}^{n_\ell-1}$
correspond to the columns of the Hartley matrix $\bH_\ell$ with entries
\begin{equation}\label{eq:Hartley_basis}
    (\bH_{\ell})_{j,k} 
    \bydef 
    \cos \mleft( \frac{2(j+\frac{1}{2})k\pi}{n_{\ell}} \mright)
    + 
    \sin \mleft( \frac{2(j+\frac{1}{2})k\pi}{n_{\ell}} \mright), 
    \quad \forall j,k = 0,\dots,n_{\ell} -1.
\end{equation}%
The matrices $\bH_{\ell}$ are orthogonal with respect to their associated inner product $\langle \cdot, \cdot\rangle_{\bW_{\ell}}$, i.e., $\bH_{\ell}^{\transp} \bH_{\ell} = \bH_{\ell} \bH_{\ell}^{\transp} = n_{\ell} \bI_{n_{\ell}}$, for $\ell=0,\dots,L$ (see \cref{app:ortho_hartley}).
\Cref{fig:hartley_basis} depicts the basis vectors $\bh^{1}_k$ and $\bh^{0}_k$ of fine- and coarse-grid Hartley basis vectors, discretized on grids with $n_{1}=16$ and $n_{0}=8$ cells, respectively.
These plots highlight that, because of aliasing, the basis vectors exhibit a discrete frequency that is different from their continuous counterpart.
Specifically, for $\ell\in \{0,1\}$, the vectors indexed by $k$ close to 0 or $n_\ell-1$ are discrete signals with low frequency, while their frequency increases as $k$ tends to $n_\ell/2$.

\begin{figure}[!ht]
    \centering
    \includegraphics[width=\linewidth]{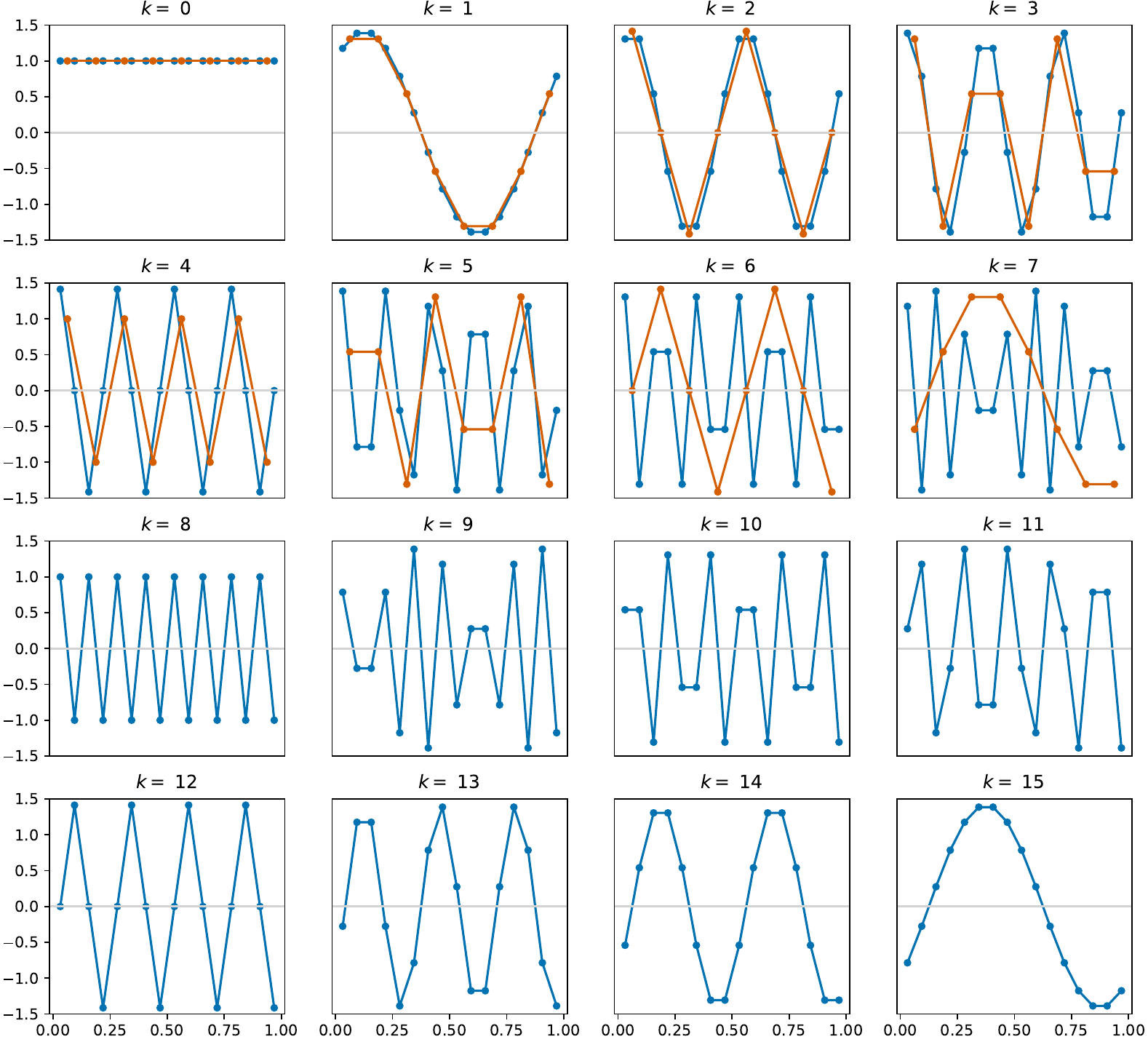}
    \caption{The vectors $\bh^{0}_k$ (orange) of a coarse-grid Hartley basis on level $\ell=0$, with $n_{0}=8$, and the vectors $\bh^{1}_k$ (blue) of a fine-grid Hartley basis on level $\ell=1$, with $n_{1}=16$.
    For $\ell\in \{0,1\}$ and for each vector $k=0,\ldots,n_{\ell}-1$, the entries of the vector $[\bh^{\ell}_k]_j$ are plotted against $x_j \bydef (2j+1)/(2 n_{\ell}) \in (0,1)$, for $j=0,\ldots,n_{\ell}-1$.}
    \label{fig:hartley_basis}
\end{figure}

We now examine the effects of the inter-level grid transfer operators between two successive levels on Hartley basis vectors.
Such effects have been studied extensively for multigrid methods~\cite{briggs_multigrid_2000, wienands_practical_2004, trottenberg_multigrid_2000} using different bases.
We succinctly present here results for the specific, cell-centered Hartley basis \cref{eq:Hartley_basis}.
For the sake of using lighter notations, we denote by $(0, 1)$ the considered pair of successive levels, but the analysis and its conclusions naturally hold for any pair $(\ell, \ell+1)$.
With the prolongation operator $\bP \bydef \bP_0^1$ defined in \cref{eq:def_prolongation_restriction}, we have (see \cref{app:proof_prolong_hartley})
\begin{equation}\label{eq:prolong_hartley}
    \bP \bh^0_k
    =
    c_k \bh^1_k - c_{n_0+k} \bh^1_{n_0+k},
    \quad \forall k=0,\dots,n_0-1,
\end{equation}
where the coefficients $c_{k} \bydef \cos(k\pi/n_1) = \cos(k\pi/(2n_0))$ are strictly decreasing with $k=0,\dots,2n_0-1$, i.e., 
\begin{equation}
   1 = c_0 > c_1 > \dots > (c_{n_0} = 0) > \dots > c_{2n_0-1} > -1.
\end{equation}
Prolongating a coarse-grid basis vector $\bh^0_k$, with $k=0,\dots,n_0-1$, produces a (fine-grid) vector consisting of a linear combination of two fine-grid basis vectors $\bh^1_k$ and $\bh^1_{n_0+k}$.
\Cref{fig:hartley_basis} shows that, for $k \leq n_0/2$, the fine-grid signal $\bh^1_k$ has the same frequency as the original, coarse-grid signal $\bh^0_k$, while $\bh^1_{n_0+k}$ has higher frequency.
Conversely, for $k > n_0/2$, the fine-grid signal $\bh^1_{n_0+k}$ has the same frequency as the coarse-grid signal $\bh^0_k$, while $\bh^1_k$ has higher frequency.
In both cases, the prolongation of a coarse-grid signal introduces spurious high-frequency (i.e., small-scale) components to the prolongated signal.
Fortunately, both $\bh^1_k$ and $\bh^1_{n_0+k}$ are damped by a factor which is closer to zero for the spurious, high-frequency signals than for the consistent, low-frequency signals.
Specifically, $c_{k}$ tends to $1$ and $c_{n_0+k}$ tends to $0$ as $k$ tends to $0$, thus damping  more severely the spurious, high-frequency signals $\bh^1_{n_0+k}$ than the consistent, low-frequency signals $\bh^1_{k}$.
Conversely, $c_{k}$ tends to $0$ and $c_{n_0+k}$ tends to $1$ as $k$ tends to $n_0$, thus damping more severely the spurious, high-frequency signals $\bh^1_{k}$ than the consistent, low-frequency signals $\bh^1_{n_0+k}$.

For the restriction operator $\bR \bydef \bR_1^0$ defined in \cref{eq:def_prolongation_restriction}, we have (see \cref{app:proof_restrict_hartley})
\begin{equation}\label{eq:restrict_hartley}
    \bR \bh^1_k
    = 2 c_k \bh^0_k,
    \quad 
    \mbox{and }
    \bR \bh^1_{n_0+k}
    = -2 c_{n_0+k} \bh^0_k,
    \quad \forall k=0,\dots,n_0-1.
\end{equation}
Restricting a fine-grid basis vector $\bh^1_k$, with $k=0,\dots,n_0-1$, produces a (coarse-grid) vector proportional to the corresponding coarse-grid basis vector $\bh^0_k$, specifically by a factor $2c_k \leq 2$.
For $n_0/2 < k < n_0$, the restricted signal has lower frequency than the original, fine-grid signal, as illustrates \cref{fig:hartley_basis}.
Similarly, restricting a fine-grid basis vector $\bh^1_{k}$, with $k=n_0,\dots,2n_0-1$, produces a (coarse-grid) vector proportional to the complementary coarse-grid basis vector $\bh^0_{k-n_0}$, specifically by a factor $-2c_{k} < 2$.
Again, high-frequency fine-grid signals $\bh^1_{k}$ corresponding to $n_0 \leq k < 3n_0/2$ are restricted to a signal with lower frequency.
In conclusion, high-frequency fine-grid basis vectors that cannot be represented on the coarse grid are thus restricted to lower-frequency signals.
Fortunately, such signals are the most damped, since they correspond to ranges of $k$ where $c_k$ is closer to $0$.

\begin{rmk}\label{rmk:mat_ident_restr_prol_hartley}
Denoting
$\bC \bydef 
    \begin{bmatrix}
    \Diag(\{c_k\}_{k=0}^{n_0-1}) & \Diag(\{-c_{n_0+k}\}_{k=0}^{n_0-1})
    \end{bmatrix}
    \in \R^{n_0 \times 2n_0}
$,
the identities in \cref{eq:prolong_hartley,eq:restrict_hartley} may be compactly recast as
\begin{align}\label{eq:mat_ident_restr_prol_hartley}
    \bP \bH_0 & = \bH_1 \bC^{\transp},
    & \bR\bH_1 & = 2\bH_0 \bC,
    &\bH_1^{\transp} \bP & = 2\bC^{\transp} \bH_0^{\transp}
    & \bH_0^{\transp} \bR &= \bC \bH_1^{\transp},
\end{align}
where the last two identities follow from the first two by exploiting the orthogonality of $\bH_0$ and $\bH_1$.
\end{rmk}

\subsection{Two-level MLMC with linear, symmetric, circulant simulators}

We now assume that the operators $\tilde{\bF}_{\ell}$ are symmetric, circulant matrices. 
Such matrices can be diagonalized in the Hartley basis~\cite{bini_matrix_1993} (see \cref{app:symcirc_diag_hartley} for the proof with the cell-centered basis), i.e.,
\begin{equation}\label{eq:F_diag_Hartley}
    \tilde{\bF}_{\ell} = \bH_{\ell} \bLambda_{\ell} \bH_{\ell}^{\transp},
    \quad
    \bLambda_{\ell} \bydef \Diag(\{\lambda^{\ell}_k\}_{k=0}^{n_\ell-1}),
    \quad
    \mbox{for } \ell \in \{0,1\}.
\end{equation} 
This property, along with the identities in
\cref{eq:mat_ident_restr_prol_hartley},
allows 
$\bF_0 \bydef \bP \tilde{\bF}_0 \bR 
= \bP \bH_0 \bLambda_0 \bH_0^{\transp} \bR$
to be decomposed as ${\bF_0 = \bH_1 \bM \bH_1^{\transp}}$,
where 
$\bM \bydef 
\bC^{\transp} \bLambda_0 \bC
= 
\begin{bmatrix}\label{eq:matrix_M}
    \bM_{11} & \bM_{12} \\
    \bM_{21} & \bM_{22}
\end{bmatrix}$,
with
\begin{align}
    \bM_{11} &\bydef \Diag(\{c_k^2 \lambda_k^0\}_{k=0}^{n_0-1} ),
    \label{eq:MLMC_M11}\\
    \bM_{22} &\bydef \Diag(\{c_{n_0+k}^2 \lambda_k^0\}_{k=0}^{n_0-1} ),
    \label{eq:MLMC_M22}\\
    \bM_{12} &\bydef \Diag(\{-c_k c_{n_0+k} \lambda_k^0\}_{k=0}^{n_0-1} )  = \bM_{21}.
    \label{eq:MLMC_M12_M21}
\end{align}
As a consequence, \cref{eq:variance_mlmc_linear_models_frob} becomes
\begin{equation}
    \mathcal{V}( \muMLMC_1)
    = \frac{1}{M_0} \| \bH_1 \bM \bH_1^{\transp} \bG^{1/2} \|^2_{F, \bW_{L}} 
    + \frac{1}{M_1} \| \bH_1 (\bLambda_1 - \bM) \bH_1^{\transp} \bG^{1/2} \|^2_{F, \bW_{L}},
\end{equation}
which, owing to the orthogonality of the Hartley matrix, and more particularly \cref{eq:prop_norms_orthog_proj_frob}, can be recast as
\begin{equation}\label{eq:variance_mlmc_circulant_models}
    \mathcal{V}( \muMLMC_1)
    = \frac{1}{M_0} \| \bM \bH_1^{\transp} \bG^{1/2} \|^2_{F, \bI_{n_L}} 
    + \frac{1}{M_1} \| (\bLambda_1 - \bM) \bH_1^{\transp} \bG^{1/2} \|^2_{F, \bI_{n_L}}.
\end{equation}
To reduce the variance of the correction term in \cref{eq:variance_mlmc_circulant_models}, the difference between $\bM$ and $\bLambda_1$ needs to be as small as possible. 
First, we note that the two off-diagonal blocks $\bM_{12}$ and $\bM_{21}$, which are themselves diagonal matrices, contribute to increasing this difference. 
On the main diagonal, i.e., in the diagonal blocks $\bM_{11}$ and $\bM_{22}$, scaled eigenvalues of $\tilde{\bF}_0$ appear twice.
To compare these diagonal blocks to the eigenvalues of $\tilde{\bF}_1$ in $\bLambda_1$, further assumptions on $\tilde{\bF}_0$ are required.
We thus introduce the Galerkin coarse-grid operator, which is an algebraic way of constructing the coarse-grid operator $\tilde{\bF}_0$ from the fine-grid operator $\tilde{\bF}_1$, and which is widely used in multigrid methods and their analysis~\cite{trottenberg_multigrid_2000}.
Specifically, the Galerkin operator is defined as
\begin{equation}\label{eq:def_galerkin}
    \tilde{\bF}_0 
    \bydef 
    \frac{1}{4} \bR \tilde{\bF}_1 \bP \in \mathbb{R}^{n_0 \times n_0}.
\end{equation}
As shown in \cref{sec:annex_galerkin}, this operator is the optimal operator in terms of minimizing
$\| \tilde{\bF}_1 - \bP \tilde{\bF}_0 \bR \|_{F,\bW_1}^2$ 
for $\bW_1 = n_1^{-1} \bI_{n_1}$ and the grid transfer operators defined by \cref{eq:def_prolongation_restriction}. 
It follows from \cref{eq:def_galerkin,eq:mat_ident_restr_prol_hartley} that the Galerkin operator $\tilde{\bF}_0$ can be diagonalized in the Hartley basis as $\tilde{\bF}_0 = \bH_0 \bLambda_0 \bH_0^{\transp}$ with 
$\bLambda_0 = \bC \bLambda_1 \bC^{\transp}$.
In other words,
the eigenvalues $\{\lambda_k^0\}_{k=0}^{n_0-1}$ of the Galerkin operator $\tilde{\bF}_0$ can be expressed from the eigenvalues $\{\lambda_k^1\}_{k=0}^{n_1-1}$ of $\tilde{\bF}_1$,
\begin{equation}\label{eq:eigenvalues_galerkin}
    \lambda_k^0 = 
    c_k^2 \lambda_k^1 
    + 
    c_{n_0+k}^2 \lambda_{n_0+k}^1,
    \quad \forall k=0,\dots,n_0-1.
\end{equation}
The resulting blocks of $\bM$ then read 
\begin{align}
    \bM_{11} &= \Diag(\{ c_k^4 \lambda_k^1 + c_k^2 c_{n_0+k}^2 \lambda_{n_0+k}^1 \}_{k=0}^{n_0-1} ), 
    \label{eq:MLMC_M11_gal}\\
    \bM_{22} &= \Diag(\{ c_{n_0+k}^4 \lambda_{n_0+k}^1 +  c_k^2 c_{n_0+k}^2 \lambda_k^1 \}_{k=0}^{n_0-1} )
    = \Diag(\{ c_{k}^4 \lambda_{k}^1 +  c_{k-n_0}^2 c_{k}^2 \lambda_{k-n_0}^1 \}_{k=n_0}^{2n_0-1} ), 
    \label{eq:MLMC_M22_gal}\\
    \bM_{12} &= \Diag(\{ -c_k^3 c_{n_0+k} \lambda_k^1 - c_k c_{n_0+k}^3 \lambda_{n_0+k}^1 \}_{k=0}^{n_0-1} ) = \bM_{21}.
    \label{eq:MLMC_M12_M21_gal}
\end{align}
Using elementary trigonometric identities, we remark that 
$c_{k-n_0} = c_{n_0+k-2n_0} = -c_{n_0+k}$. 
Therefore, the main diagonal of $\bM$ deviates from $\bLambda_1$ by a multiplicative damping factor $c_k^4 \leq 1$ for ${k=0,\ldots,2n_0-1}$, on the one hand, and by the addition of a spurious, complementary eigenvalue, though also damped by $c_k^2c_{n_0+k}^2 < 1$ for ${k=0,\ldots,2n_0-1}$.
The off-diagonal blocks $\bM_{12}$ and $\bM_{21}$ introduce spurious terms that contribute to the difference ${\bLambda_1 - \bM}$ and thus increase the variance of the 2-level estimator.
Note that, because these terms are to be compared with $0$, the comparison with the eigenvalues of the fine-grid operator $\tilde{\bF}_1$ is of little interest, which is why we simply consider the damping factors $-c_k c_{n_0+k}$ with respect to eigenvalues $\lambda^0_k$ of the coarse-grid operator $\tilde{\bF}_0$, for ${k=0, \ldots, n_0-1}$, given by \cref{eq:MLMC_M12_M21}.

The evolution with $k=0,\ldots,n_1-1$ of the four damping factors is presented in \cref{fig:coeff_M_unfiltered}.
We observe that the eigenvalues $\lambda^1_k$ associated with low-frequency vectors of the fine Hartley basis, i.e., for $k$ close to $0$ and $n_1-1$ (see \cref{fig:hartley_basis}), are well-represented on the main diagonal of $\bM$, since $c_k^4 \approx 1$.
At the same time, the damping factors $c_k^2c_{n_0+k}^2$ corresponding to spurious components are close to $0$, resulting in small values of the first and last few diagonal entries of the difference $\bLambda_1 - \bM$.
In particular, we note that the first entry of $\bM_{11}$ exactly matches that of $\bLambda_1$, i.e., $\lambda^1_0$, because $c_0=1$ and $c_{n_0} = 0$.
On the other hand, eigenvalues $\lambda^1_k$ associated with medium- to high-frequency vectors of the fine Hartley basis are severely damped since $c_k^4$ quickly decreases to $0$ as $k$ approaches $n_0=n_1/2$.
Spurious diagonal components are also somewhat damped by a factor $c_k^2c_{n_0+k}^2$, which is maximal for $k\in\{n_1/4, 3n_1/4\}$.
Finally, the spurious, off-diagonal components are damped by factors $-c_k^3 c_{n_0+k}$ and $-c_k c_{n_0+k}^3$, whose magnitudes are maximal for $k\in\{n_1/8+1, 7n_1/8-1\}$ and $k\in\{3n_1/8-1,5n_1/8+1\}$.

\begin{figure}[!ht]
    \centering%
    \subfigure[Damping factors in {$\bM$}.\label{fig:coeff_M_unfiltered}]{\includegraphics[width=.45\linewidth]{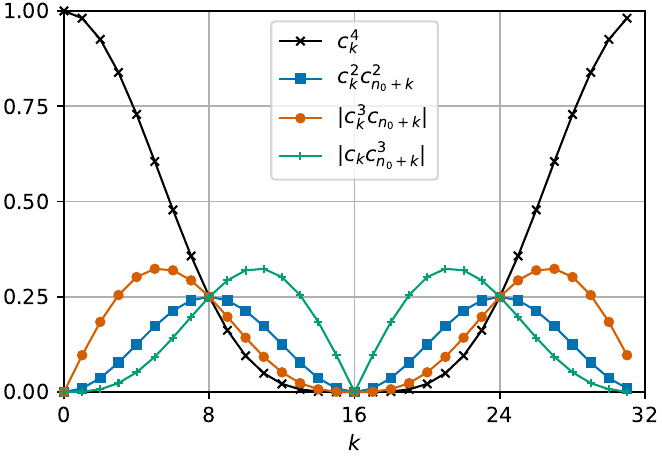}}\quad%
    \subfigure[Damping factors in {$\bar{\bM}$}.\label{fig:coeff_M_filtered}]{\includegraphics[width=.45\linewidth]{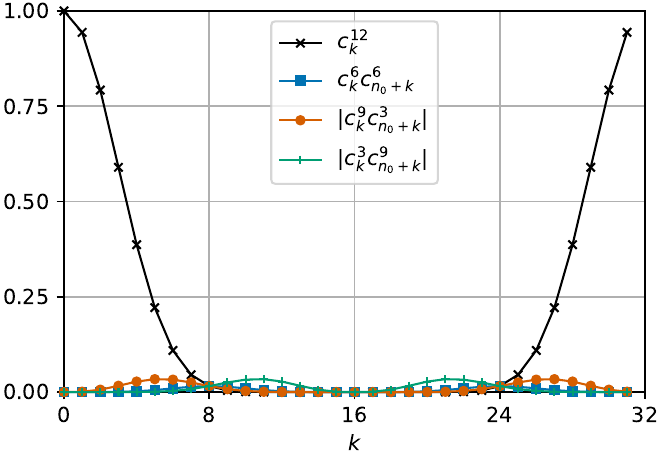}}
    \caption{Damping factors of the eigenvalues in $\bM$ and $\bar{\bM}$ as functions of $k = 0, \dots, n_1-1$ with $n_1=2n_0=32$, when using the Galerkin operator \cref{eq:def_galerkin}.
    The black curves correspond to the factors of the correct eigenvalues on the main diagonal, while blue curves represent the factors of the spurious eigenvalues.
    The orange and green curves correspond to the factors of the off-diagonal blocks.}
    \label{fig:coeff_M}
\end{figure}

\subsection{Two-level F-MLMC with linear, symmetric, circulant simulators}

We now turn to the F-MLMC estimator based on the second-order Shapiro filter \cite{shapiro_smoothing_1970, falissard_genuinely_2013, falissard_uneven-order_2015} defined as
\begin{equation}\label{eq:def_filtering_operator}
    \bS_{\ell} \bydef \frac{1}{4}
    \begin{bmatrix}
        2 & 1 & & & 1 \\
        1 & 2 & 1 & & \\
        & \ddots & \ddots & \ddots & \\
        & & 1 & 2 & 1 \\
        1 & & & 1 & 2
    \end{bmatrix} \in \R^{n_\ell \times n_\ell}.
\end{equation}
With the specific grid transfer operators defined in \cref{eq:def_prolongation_restriction}, the operator $\bar{\bP}_{\ell-1}^{\ell}$ corresponds to the linear interpolation operator between the levels $\ell-1$ and $\ell$.
We again restrict ourselves to a two-level analysis and define the filtered grid transfer operators as $\bar{\bP} = \bS_1 \bP$ and $\bar{\bR} = \bR \bS_1$, where $\bP \bydef \bP_0^1$ and $\bR \bydef \bR_1^0$ as in \cref{sec:hartley-basis}, and where $\bS_1$ denotes the second-order Shapiro filter defined in \cref{eq:def_filtering_operator}.
Similarly to \cref{eq:prolong_hartley,eq:restrict_hartley}, it is possible to study the effect the filtered transfer operators on the Hartley basis vectors (see \cref{app:proof_filtered_prol_restr_hartley}).
For the prolongation $\bar{\bP}$, we have
\begin{equation}\label{eq:filtered_prolong_hartley}
    \bar{\bP} \bh^0_k
    = c_k^3 \bh^1_k - c_{n_0+k}^3 \bh^1_{n_0+k},
    \quad \forall k=0,\dots,n_0-1.
\end{equation}%
The addition of the Shapiro filter raises the damping factors $c_k$ to the power 3. The prolongated Hartley basis vectors are thus more severely damped than they were without filtering.
Again, the most damped fine-grid basis vectors $\bh^1_k$ are those corresponding to $k$ close to $n_0=n_1/2$, i.e., high-frequency signals.
For the restriction operator we have
\begin{equation}\label{eq:filtered_restrict_hartley}
    \bar{\bR} \bh^1_k
    = 2 c_k^3 \bh^0_k,
    \quad 
    \mbox{and }
    \bar{\bR} \bh^1_{n_0+k}
    = - 2 c_{n_0+k}^3 \bh^0_k,
    \quad \forall k=0,\dots,n_0-1.
\end{equation}%
Similar conclusions can be drawn as for the unfiltered case, but with the damping factors raised to the power of 3, thus increasing their effect, which still affects more strongly the fine-grid, high-frequency signals that cannot be represented on the coarse grid.
Similarly to the unfiltered case (see \cref{rmk:mat_ident_restr_prol_hartley}), identities \cref{eq:filtered_prolong_hartley,eq:filtered_restrict_hartley} can be recast as
\begin{align}\label{eq:mat_ident_restr_prol_hartley_filtered}
    \bar{\bP} \bH_0 & = \bH_1 \bC_3^{\transp},
    & \bar{\bR}\bH_1 & = 2\bH_0 \bC_3,
    &\bH_1^{\transp} \bar{\bP} & = 2\bC_3^{\transp} \bH_0^{\transp}
    & \bH_0^{\transp} \bar{\bR} &= \bC_3 \bH_1^{\transp},
\end{align}
where 
$\bC_3 \bydef 
    \begin{bmatrix}
    \Diag(\{c_k^3\}_{k=0}^{n_0-1}) & \Diag(\{-c_{n_0+k}^3\}_{k=0}^{n_0-1})
    \end{bmatrix}
    \in \R^{n_0 \times 2n_0}
$.%

The impact of filtering on the total variance of the MLMC estimator is now assessed, considering a 2-level MLMC estimator and assuming that $\tilde{\bF}_1$ and $\tilde{\bF}_0$ are symmetric, circulant matrices.
From \cref{eq:F_diag_Hartley,eq:mat_ident_restr_prol_hartley_filtered}, we deduce the decomposition 
$\bar{\bF}_0 \bydef \bar{\bP} \tilde{\bF}_0 \bar{\bR} = \bH_1 \bar{\bM} \bH_1^{\transp}$, 
where 
$
\bar{\bM} 
\bydef
\begin{bmatrix}
    \bar{\bM}_{11} & \bar{\bM}_{12} \\
    \bar{\bM}_{21} & \bar{\bM}_{22}
\end{bmatrix}
$,
with 
\begin{align}
    \bar{\bM}_{11} &= \Diag(\{ c_k^6 \lambda_k^0 \}_{k=0}^{n_0-1} ),\\
    \bar{\bM}_{22} &= \Diag(\{ c_{n_0+k}^6 \lambda_k^0 \}_{k=0}^{n_0-1} ),\\
    \bar{\bM}_{12} &= \Diag( \{ -c_k^3 c_{n_0+k}^3 \lambda_k^0 \}_{k=0}^{n_0-1} ) = \bar{\bM}_{21}.
\end{align}%
Upon replacing $\bF_0$ with its filtered counterpart $\bar{\bF}_0$, \cref{eq:variance_mlmc_linear_models_frob} can be written as in \cref{eq:variance_mlmc_circulant_models}, but with $\bM$ replaced by $\bar{\bM}$.
The sparsity pattern of $\bar{\bM}$ is identical to that of $\bM$, and its entries are similar, but with damping factors raised to increased powers.
We remark that the off-diagonal blocks $\bar{\bM}_{12}$ and $\bar{\bM}_{21}$ have entries that are more strongly damped than those of their unfiltered counterparts, $\bM_{12}$ and $\bM_{21}$,
as can be visualized in \cref{fig:coeff_M} (orange plots),
which contributes to reducing the off-diagonal entries of $\bLambda_1 - \bar{\bM}$ compared to those of $\bLambda_1 - \bM$.

To study more closely the diagonal entries of $\bar{\bM}$ and compare them to  the eigenvalues of $\tilde{\bF}_1$ in $\bLambda_1$ we resort to the Galerkin operator defined by $\tilde{\bF}_0 \bydef \frac{1}{4} \bar{\bR} \tilde{\bF}_1 \bar{\bP}$.
This definition is inspired by the form of \cref{eq:def_galerkin}, although there is no guarantee of its optimality.
Then, from \cref{eq:F_diag_Hartley,eq:mat_ident_restr_prol_hartley_filtered}, it follows that the Galerkin operator $\tilde{\bF}_0$ can be diagonalized in the Hartley basis as $\tilde{\bF}_0 = \bH_0 \bLambda_0 \bH_0^{\transp}$ with 
$\bLambda_0 = \bC_3 \bLambda_1 \bC_3^{\transp}$,
or, equivalently,
\begin{equation}\label{eq:eigenvalues_galerkin_filtered}
    \lambda_k^0 = 
        c_k^6 \lambda_k^1 
        + 
        c_{n_0+k}^6 \lambda_{n_0+k}^1 ,
    \quad \forall k=0,\dots,n_0-1.
\end{equation}%
The diagonal blocks of $\bar{\bM}$ thus become
\begin{align}
    \bar{\bM}_{11}
    &= \Diag(\{ c_k^{12} \lambda_k^1 + c_{n_0+k}^6 c_k^6 \lambda_{n_0+k}^1 \}_{k=0}^{n_0-1} ), \\
    \bar{\bM}_{22}
    &= \Diag(\{ c_{n_0+k}^{12} \lambda_{n_0+k}^1 + c_{n_0+k}^6 c_k^6 \lambda_k^1 \}_{k=0}^{n_0-1} )
    = \Diag(\{ c_{k}^{12} \lambda_{k}^1 +  c_{k-n_0}^6 c_{k}^6 \lambda_{k-n_0}^1 \}_{k=n_0}^{2n_0-1} ), \\
    \bar{\bM}_{12}
    &= \Diag(\{ -c_k^9 c_{n_0+k}^3 \lambda_k^1 - c_k^3 c_{n_0+k}^9 \lambda_{n_0+k}^1 \}_{k=0}^{n_0-1} ) = \bar{\bM}_{21}.
\end{align}
The conclusions are the same as for the diagonal blocks of $\bM$, but with damping factors raised to higher powers.
Consequently, the spurious eigenvalues on the main diagonal are damped more strongly than in the unfiltered case, as can be visualized in \cref{fig:coeff_M} (blue plots).
Moreover, the factors $c_k^{12}$ also damp more strongly the consistent eigenvalues compared to the factors $c_k^4$ of $\bM$, as can be visualized in \cref{fig:coeff_M} (black plots).
Although the addition of filters induces a certain loss of (mainly high-frequency) information, the spurious signals introduced by the grid transfer operators are significantly reduced.
These results suggest that, for filtering to be beneficial, a tradeoff needs to be found between reducing the detrimental effects induced by the grid transfer operators and degrading of the information of the original signal.
The second-order Shapiro filters considered in this paper seem to offer a good compromise.
Further endeavors to study and improve the filtering process may be pursued in future work.

\section{Numerical experiments}%
\label{sec:experiments}%

As discussed in the introduction, one area where the estimation of the expectation of discretized random fields arises is covariance modelling, specifically when estimating the intrinsic variances of a diffusion-based covariance operator using a randomization method~\cite{weaver_correlation_2001, weaver_evaluation_2021}.
This particular problem, which is briefly outlined below, will be the focus of our numerical experiments.

\subsection{Problem  description}\label{sec:problem_presentation}
Let $u \colon \mathcal{D} \to \R$ and $b \colon \mathcal{D} \to \R$ be
square-integrable functions on the domain $\mathcal{D} \subset \mathbb{R}^d$
where $d \in \{1,2,3\}$ is the spatial dimension. 
We consider numerical solutions of the following elliptic equation, subject to 
application-dependent boundary conditions (BCs):
\begin{equation}\label{eq:elliptic}
    ( I - \nabla \cdot \bK \nabla)^m u = b,
\end{equation}
where $m$ is a positive integer, $I$ is the identity operator,
and $\bK \colon \mathcal{D} \to \R^{d \times d}$ 
is a symmetric, positive-definite (SPD) tensor field with entries $[K_{ij}]_{i,j=1,\ldots,d}$.
If $\bK$ is constant, the integral solution on $\R^d$ corresponds to the application of a covariance operator, whose kernel is a covariance function from the Mat\'ern class, to $b$ \cite{whittle63,guttorp06}.

We assume that the operator in \cref{eq:elliptic} is discretized in space on a (not necessarily structured) 
grid of $n$ cells.  
We can then deduce the covariance matrix associated with the numerical solution of \cref{eq:elliptic} as
\begin{equation}\label{eq:diffusion_op}
    \bL \bydef ( \bI - \bDelta )^{-m}
    \bW^{-1},
\end{equation}
where $\boldsymbol{\Delta}$ is the matrix representing a spatial discretization of the differential operator ${\nabla \cdot \bK \nabla}$,
and ${\bW \in \mathbb{R}^{n \times n}}$ is an SPD Gram matrix that encodes the geometrical and structural information related to the discrete approximation of the diffusive term on the grid.
Specifically, $\bW$ is such that $\bDelta$ is self-adjoint with respect to
$\langle \cdot, \cdot \rangle_{\bW}$,
i.e., $\bW\bDelta = \bDelta^\transp \bW$. 
Consequently, the matrix $\bL$ is self-adjoint (symmetric) with respect to the canonical inner product.
In the experiments, we consider only a diagonal diffusivity tensor $K_{ij} = K_{ij}\delta_{ij}$ where
$\delta_{ij}$ is the Kronecker delta. Specifically, we define the diagonal elements according to the relation
${K_{ii}(\bm{x}) = (2m-d-2)^{-1} (D_{ii}(\bm{x}))^2}$ where the elements
$[D_{ii}(\bm{x})]_{i=1,\ldots,d}$ correspond to the directional correlation length-scales at the spatial location $\bm{x}$, and $m > d/2 + 1$~\cite[section~3]{weaver_diffusion_2013}.

The matrix $\bL$ is SPD but does not define a covariance matrix with meaningful variances for applications like data assimilation. 
As such, $\bL$ must be normalized by its diagonal so that the desired variances can be applied. Thus, we define the covariance matrix of interest as
$\bB = \bSigma \bGamma \bL \bGamma \bSigma$,
where $\bGamma = \Diag(\diag(\bL))^{-1/2}$ is a normalizing diagonal matrix 
such that $\Diag(\diag(\bGamma\bL\bGamma)) = \bI_n$,
and
$\bSigma^2 = \bSigma \bSigma$ is the diagonal matrix with entries corresponding to the desired variances,
i.e., $\diag(\bB) = \diag(\bSigma^2) = (\sigma_1^2, \ldots, \sigma_n^2)$.
In these expressions, the operator $\diag(\cdot)$ maps a matrix to the vector consisting of the diagonal elements of that matrix, while the operator $\Diag(\cdot)$ maps a vector to the diagonal matrix whose diagonal consists of the entries of that vector.

The main computational challenge lies in computing the diagonal $\btheta \bydef \diag(\bL) = (\theta_1, \ldots, \theta_n)$.
Indeed, for large-scale problems, the matrix $\bL$ is not assembled, and only applications of $\bL$ to vectors are accessible.
Thus, its diagonal entries are not explicitly stored and need to be determined differently~\cite{weaver_evaluation_2021}.
A direct way would be to recover these by applying $\bL$ to the canonical basis vectors of $\R^n$, i.e., $\theta_k = (\bL \be_k)_k$, for $k=1,\ldots,n$.
For large $n$, this approach is not computationally tractable.
Here, we focus on an alternative strategy, used in operational systems, consisting of approximating $\btheta$ by randomization~\cite{weaver_evaluation_2021}.
Taking $m=2q$, $\bL$ can be subsequently factored (see \cref{app:facto_L}) as
$\bL = \bA \bW \bA^\transp$, where
\begin{equation}\label{eq:sqrt_diffusion_op}%
    \bA 
    \bydef ( \bI - \boldsymbol{\Delta} )^{-q} \bW^{-1},
\end{equation}%
which, as for $\bL$ itself, cannot be explicitly assembled in large-scale applications.
This decomposition of $\bL$ implies that, for any random vector $\bX$ with $\Exp[\bX] = \bzero_{n}$ and 
$\Exp[\bX\bX^\transp] = \bW$,
\begin{equation}\label{eq:theta}
    \btheta
    =
    \diag(\bL)
    =
    \diag(\bA \Exp[\bX \bX^\transp] \bA^\transp)
    =
    \diag(\Cov[\bA\bX])
    =
    \Var[\bA\bX]
    =
    \Exp[\bA\bX \odot \bA\bX],
\end{equation}
where 
$\Exp[\cdot]$ and $\Var[\cdot]$ denote the element-wise expectation and variance of a random vector, $\Cov[\cdot]$ denotes the covariance matrix of a random vector,
and where
$\odot$ denotes the Schur product (a.k.a.\ the Hadamard or element-wise product) between two vectors or matrices of the same size.
One way to construct such a vector $\bX$ is to define $\bX = \bV \bZ$, where $\bV$ arises from the factorization ${\bW = \bV \bV^{\transp}}$, and where $\bZ$ is a random vector with $\Exp[\bZ] = \bzero_{n}$ and $\Exp[\bZ\bZ^\transp] = \bI_n$.
In the rest of this paper, we consider normal random vectors $\bX \sim \mathcal{N}(\bzero_{n}, \bW)$, which can be constructed by $\bX= \bV\bZ$, with $\bZ \sim \mathcal{N}(\bzero_{n}, \bI_{n})$.
The expectation $\btheta$ of the $\R^n$-valued random vector ${\bY \bydef \bA\bX \odot \bA\bX}$ can then be estimated using MC sampling. 
Given a random $M$-sample $\{\bX^{(i)}\}_{i=0}^M$ of $\bX$, an unbiased estimator $\hat{\btheta}$ of $\btheta$ is the sample mean
\begin{equation}\label{eq:mc_diffusion}
    \hat{\btheta} = \frac{1}{M} \sum_{i=1}^{M} (\bA \bX^{(i)}) \odot (\bA \bX^{(i)}).
\end{equation}
We remark that the estimator $\hat{\btheta}$ only requires $M$ applications of $\bA$ to a vector, typically with ${M \ll n}$ in large-scale applications.
Furthermore, by construction, the MC estimator defined by \cref{eq:mc_diffusion} yields non-negative estimates, which is a fundamental requirement for the problem under consideration. 
Alternative MC estimators have been proposed in the literature for estimating the diagonal of a general (not necessarily symmetric) matrix \cite{Bekas2007_EstimatorDiagonalMatrix,hallman_monte_2023}, which do not employ a factored form of the matrix and hence do not guarantee non-negative estimates.
Here, we investigate the use of the MLMC and F-MLMC estimators described in \cref{sec:mlmc_for_discretized_fields,sec:fmlmc} to improve (in terms of RMSE) the estimation of $\btheta$ and hence the efficiency of determining accurate normalization coefficients $\bGamma$.
It should be noted that the proposed {(F)-}MLMC construction does not guarantee non-negative estimates either, although negative estimates were never encountered in our experiments.

In practice, the matrix-vector product $\bA \bx$ can be computed by solving the sequence of SPD systems of linear equations $(\bW (\bI - \bDelta) \by^{(j)}= \bz^{(j)})_{j=1}^q$, with $\bz^{(1)} \bydef \bx$ and $\bz^{(j)} \bydef \bW\by^{(j-1)}$ for $j=2,\ldots,q$, so that $\by^{(q)} = \bA \bx$.
In our experiments, the numerical solving of these systems is achieved by precomputing a Cholesky decomposition of $\bW (\bI - \bDelta)$.
However, for large-scale problems, it is not reasonably possible to compute or store such a decomposition, and sparse iterative methods are typically used instead~\cite{Weaver2016_CorrelationOperatorsBased,Chrust2025_ImpactEnsemblebasedHybrid}.

\subsection{1D illustration}\label{sec:1d_illustration}

Before turning to the (F-)MLMC estimation of $\btheta$, we briefly consider a simpler experiment to verify numerically the conclusions of the spectral analysis of \cref{sec:spectral_analysis}.
Specifically, we consider a hierarchy of simulators $(f_\ell)_{\ell=0}^L$ defined through \cref{eq:f_simu_restr_prolong_fine} by $\tilde{f}_{\ell} \colon \bx_\ell \mapsto \bA_\ell \bx_\ell$, where $\bA_\ell$ is the factor in \cref{eq:sqrt_diffusion_op} arising from a cell-centered discretization on the 1D domain $\mathcal{D} := [0,1]$, with periodic boundary conditions. 
Specifically, we consider the same setting and grid transfer operators as in \cref{sec:spectral_analysis_1d_setting}, and
we define hierarchies of different depths (i.e., numbers of levels), corresponding to $L \in \{0, \ldots, 5\}$, with a fixed finest discretization corresponding to $n_L = 512$.
The diffusion tensor field $\bK$, which reduces here to a scalar field, is taken to be constant by setting ${D_{11}(\bm{x}) = D \in \R}$ for all ${\bm{x} \in \mathcal{D}}$.
The scalar value $D$ will be referred to as the length-scale.
We note that in this setting, the matrices $\bA_\ell$ are symmetric and circulant~\cite{Goux2024_ImpactCorrelatedObservation}.

The statistical parameter of interest is $\Exp[f_L(\bX_L)] = \Exp[\bA_L \bX_L]$, with $\bX_L \sim \mathcal{N}(\bzero_{n_L}, \bW_{L})$.
It trivially follows that $\Exp[f_L(\bX_L)] = \bzero_{n_L}$.
The MLMC estimator $\muMLMC_L$ is defined as in \cref{eq:mlmc_estimator_transfer,eq:mlmc_estimator_transfer_v2}, where the transfer operators are defined as in \cref{eq:def_prolongation_restriction}.
With this choice, it follows from \cref{eq:composed_restr_prolong,eq:prop_restr_prolong} that the MLMC estimator $\muMLMC_L$
involves, at each correction level, input random vectors that are such that 
$\bR_L^{0} \bX_L \sim \mathcal{N}(\bzero_{n_0}, \bW_0)$ and
$[\begin{matrix}(\bR_L^{\ell} \bX_L)^{\transp} & (\bR_L^{\ell-1} \bX_L)^{\transp}\end{matrix}]^{\transp}
\sim
\mathcal{N}(\bzero_{n_\ell + n_{\ell-1}}, \tilde{\bW}_\ell)$ for $\ell=1,\ldots,L$, with
\begin{equation}\label{eq:mlmc_coupling}
    \tilde{\bW}_\ell
    \bydef
    \begin{bmatrix}
        \bW_{\ell} & \bW_{\ell}\bP_{\ell-1}^{\ell}  \\
        \bR_{\ell}^{\ell-1}\bW_{\ell} & \bW_{\ell-1}
    \end{bmatrix} \in \R^{(n_\ell + n_{\ell-1}) \times (n_\ell + n_{\ell-1})},
\end{equation}
which has the same structure as \cite[eq.~{(3.12)}]{croci_efficient_2018}.
In fact, \cref{eq:mlmc_coupling} and \cite[eq.~{(3.12)}]{croci_efficient_2018} are equivalent under the functional, piecewise constant representation of the input fields.
The filtered estimator $\muFMLMC_L$ is defined as in \cref{eq:fmlmc_estimator}, with the same transfer operators and the 2nd-order Shapiro filters defined in \cref{eq:def_filtering_operator}.

We now describe the cost model that will be prescribed in the remainder of this paper.
  First, the cost $\tilde{\mathcal{C}}_\ell$ of evaluating $\tilde{f}_\ell$ is assumed to be proportional to the number of cells $n_\ell$ on level $\ell$, i.e., $\tilde{\mathcal{C}}_\ell = \alpha n_\ell$.
  This is consistent with a fixed number $q \times T$ of matrix-free applications of $\bA_\ell$, where $q$ represents the number of diffusion steps in \cref{eq:sqrt_diffusion_op}, i.e., the number of linear systems to be solved, and $T$ the fixed number of iterations used to solve each of these systems.
  Second, the cost $\mathcal{C}_\ell^{\transf}$ of applying the (possibly filtered) grid transfer operators is modeled as $\mathcal{C}_\ell^{\transf} = \beta \sum_{k=\ell+1}^L n_k$, where $\beta$ denotes the constant (level-independent) number of floating-point operations per cell on level $k$ for applying the (possibly filtered) restriction operator from level $k$ to level $k-1$.
  This model is consistent with the fact that the restriction from the finest grid is defined as the composition of restriction operators between successive levels (see \cref{eq:composed_restr_prolong,eq:composed_restr_prolong_fmlmc}).
  Exploiting the nestedness of the tessellations, and assuming a constant refinement factor $s \bydef n_\ell / n_{\ell-1} > 1$ between two successive grids, the restriction cost reduces to $\mathcal{C}_\ell^{\transf} = \frac{s}{s-1} \beta (s^{L-\ell}-1) n_\ell$.
  Eventually, the cost $\mathcal{C}_\ell = \tilde{\mathcal{C}}_\ell + \mathcal{C}_\ell^{\transf}$ of evaluating $f_\ell$ is given by
\begin{equation}
  \label{eq:cost_model}
  \mathcal{C}_{\ell} = s^{\ell-L} \left[ 1+\gamma \left( s^{L-\ell}-1 \right) \right] \mathcal{C}_L,
  \qquad
  \mbox{ with }
  \gamma \bydef \dfrac{s}{s-1} \dfrac{\beta}{\alpha} >0.
\end{equation}
It is then easy to see from \cref{eq:cost_model} that $\mathcal{C}_{\ell} / \mathcal{C}_{\ell-1} \to s$ as $\gamma \to 0$.

  In the target application, Chebyshev iterations (see, e.g., \cite{Golub1961_ChebyshevSemiiterativeMethods,Golub1961_ChebyshevSemiiterativeMethodsa,Golub2013_MatrixComputations}) are typically used~\cite{Chrust2025_ImpactEnsemblebasedHybrid}.
  We note that each iteration involves one application of the operator $(\bI_{n_\ell} - \bDelta_\ell)$, which, for a second-order discretization of the 1D Laplace operator represented by a 3-point stencil, requires 3 floating-point operations per cell, hence $\alpha = 3qT$.
  In what follows, we choose $m=2q=10$ and $T=20$, which are typical values~\cite{Chrust2025_ImpactEnsemblebasedHybrid}, so that $\alpha = 300$.
  For the unfiltered MLMC, the 1D restriction, as defined in \cref{eq:def_prolongation_restriction}, from a given fine level $k$ to the immediately coarser level $k-1$ requires $s=2$ operations per coarse cell, i.e., a total of $sn_{k-1}=n_k$ operations, hence $\beta=1$.
  The 1D pre-filtering defined in \cref{eq:def_filtering_operator}, represented by a 3-point stencil, requires 3 additional operations on level $k$, so that $\beta=4$ for the filtered MLMC.
\Cref{tab:cost_model_1d} summarizes the normalized costs $\mathcal{C}_{\ell} / \mathcal{C}_{L}$ and cost reduction factors $\mathcal{C}_{\ell}/ \mathcal{C}_{\ell-1}$ of the simulators $f_\ell$ (MLMC) and $\bar{f}_\ell$ (F-MLMC) for this 1D example.

\begin{table}[!ht]%
  \centering%
  {\sisetup{round-mode=places, round-precision=2, round-pad=true}%
    \begin{tabular}{ccccc}
      \toprule
      & \multicolumn{2}{c}{\textbf{MLMC}} & \multicolumn{2}{c}{\textbf{F-MLMC}}\\
      & \multicolumn{2}{c}{($\gamma = 1/150$)} & \multicolumn{2}{c}{($\gamma = 4/150$)}\\
      \cmidrule(lr){2-3}\cmidrule(lr){4-5}
      {$\ell$} & {$\mathcal{C}_{\ell} / \mathcal{C}_{L}$} & {$\mathcal{C}_{\ell}/ \mathcal{C}_{\ell-1}$}
      & {$\mathcal{C}_{\ell} / \mathcal{C}_{L}$} & {$\mathcal{C}_{\ell}/ \mathcal{C}_{\ell-1}$}\\
      \midrule
      $L$   & \num{1}             & \num{1.98675496689} & \num{1}             & \num{1.94805194805} \\
      $L-1$ & \num{0.50333333333} & \num{1.97385620915} & \num{0.51333333333} & \num{1.9012345679} \\
      $L-2$ & \num{0.255}         & \num{1.94904458599} & \num{0.27}          & \num{1.8202247191}\\
      $L-3$ & \num{0.13083333333} & \num{1.90303030303} & \num{0.14833333333} & \num{1.69523809524}\\
      $L-4$ & \num{0.06875}       & \num{1.82320441989} & \num{0.0875}        & \num{1.53284671533}\\
      $L-5$ & \num{0.03770833333} & N/A                 & \num{0.05708333333} & N/A \\
      \bottomrule
    \end{tabular}%
  }%
  \caption{Normalized costs $\mathcal{C}_{\ell} / \mathcal{C}_{L}$ and cost reduction factors $\mathcal{C}_{\ell}/ \mathcal{C}_{\ell-1}$ of the simulators $f_\ell$ (MLMC) and $\bar{f}_\ell$ (F-MLMC) corresponding to the 1D example of \cref{sec:1d_illustration}.}
  \label{tab:cost_model_1d}
\end{table}

For all the experiments conducted in the remainder of this paper, the optimal multilevel sample allocation is given by \cref{eq:optimal_sample_alloc} for a fixed computational budget $\mathcal{C} = 100 \mathcal{C}_L$.
The variances $\mathcal{V}_{\ell}$ required for the computation of \cref{eq:optimal_sample_alloc} are estimated in a preprocessing stage from a pilot sample of size \num{1000}.

In what follows, $\hat{\bmu}_L$ denotes an unbiased estimator of $\bmu_L \bydef \Exp[f_L(\bX_L)]$, typically, the plain MC estimator $\muMC_L$ on the finest level $L$, the MLMC estimator $\muMLMC_L$ or the F-MLMC estimator $\muFMLMC_L$.
We recall that the unbiasedness of $\hat{\bmu}_L$ implies that $\mse(\hat{\bmu}_L, \bmu_L) = \mathcal{V}(\hat{\bmu}_L)$.
In order to examine the different scales of the proposed estimators, we decompose the variance $\mathcal{V}(\hat{\bmu}_L)$ into contributions of the individual Hartley modes.
Specifically, exploiting the orthogonality of $\bH_L$, it follows from \cref{eq:prop_norms_orthog_proj} that
\begin{equation}\label{eq:mse_hartley_decomposition}
    \mathcal{V}(\hat{\bmu}_L) 
    = \Exp [ \| \hat{\bmu}_L - \bmu_L \|_{\bW_{L}}^2 ]
    = \Exp [ \| \bH_L^{\transp} \bV_L^{\transp} (\hat{\bmu}_L - \bmu_L) \|_{\bW_{L}}^2 ],
\end{equation}
which, owing to the linearity of the expectation operator and the fact that $\bW_L = n_L^{-1}\bI_{n_L}$, may be recast as
\begin{equation}
    \mathcal{V}(\hat{\bmu}_L) 
    = \|\bnu\|_1
    \bydef \sum_{k=0}^{n_L-1} \nu_k,
    \qquad
    \nu_k 
    \bydef 
    n_L^{-2} \Exp[ ((\bh^L_k)^{\transp}(\hat{\bmu}_L - \Exp[f_L(\bX_L)]))^2 ]
    =
    n_L^{-2} \Var[ (\bh^L_k)^{\transp} \hat{\bmu}_L ],
\end{equation}
where $\bnu = (\nu_k)_{k=0}^{n_L-1}$ will be referred to as the spectral variance.
Furthermore, we define the cumulative spectral variance $\bnucum = (\nucum_k)_{k=0}^{n_L-1}$ such that $\nucum_k = \sum_{k'=0}^k \nu_{k'}$, implying that the total variance is given by $\nucum_{n_L-1} = \|\bnu\|_1$.
For better visualization and interpretation, the columns of the 1D Hartley matrix $\bH_L$ used in \cref{eq:mse_hartley_decomposition} are actually reordered so that they are sorted by increasing representable frequency on the corresponding discrete grid (see \cref{fig:hartley_basis}, which depicts the Hartley basis vectors without reordering).
Specifically, the new matrix with reordered columns is obtained as $\bH_L \bPi$, where $\bPi = (\Pi_{j,k})_{j,k=0}^{n_L-1}$ is the permutation matrix defined by $\Pi_{j, 2k} = \delta_{j,k}$ and $\Pi_{j, 2k+1} = \delta_{j, n_L-k-1}$, for $j=0,\ldots,n_L-1$ and $k=0,\ldots, n_L/2-1$ (assuming $n_L$ is even), and where $\delta_{i,j}$ denotes the Kronecker delta.

\begin{figure}[!ht]
    \centering%
    \subfigure[Spectral variance {$\bnu$} of MC and MLMC estimators.\label{fig:illustration_spectral_006}]{\includegraphics[width=.45\linewidth]{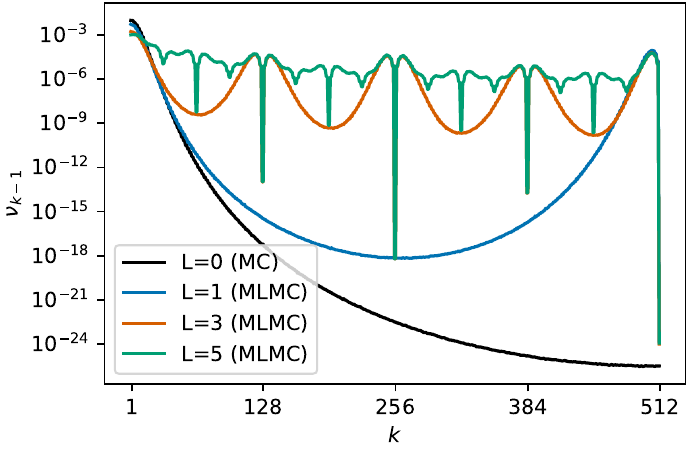}}\quad%
    \subfigure[Spectral variance {$\bnu$} of MC and F-MLMC estimators.\label{fig:illustration_filtered_spectral_006}]{\includegraphics[width=.45\linewidth]{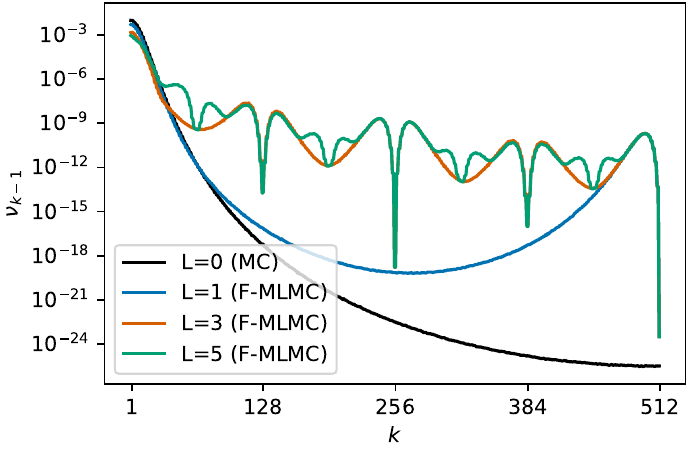}}\\
    \subfigure[Cumulative spectral variance {$\bnucum$} of MC and MLMC estimators.\label{fig:illustration_cml_006}]{\includegraphics[width=.44\linewidth]{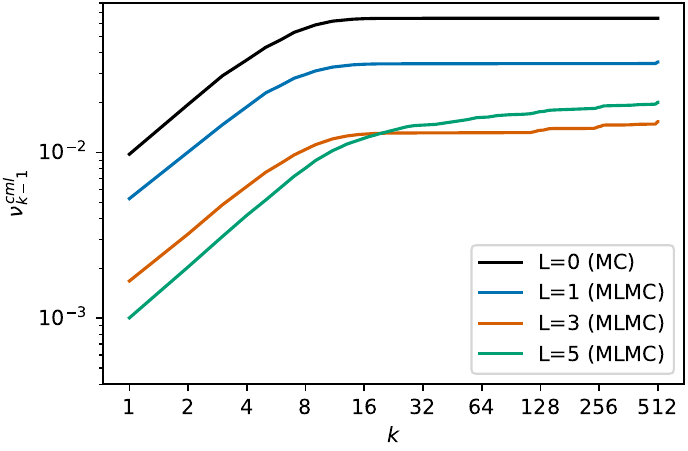}}\quad%
    \subfigure[Cumulative spectral variance {$\bnucum$} of MC and F-MLMC estimators.\label{fig:illustration_filtered_cml_006}]{\includegraphics[width=.44\linewidth]{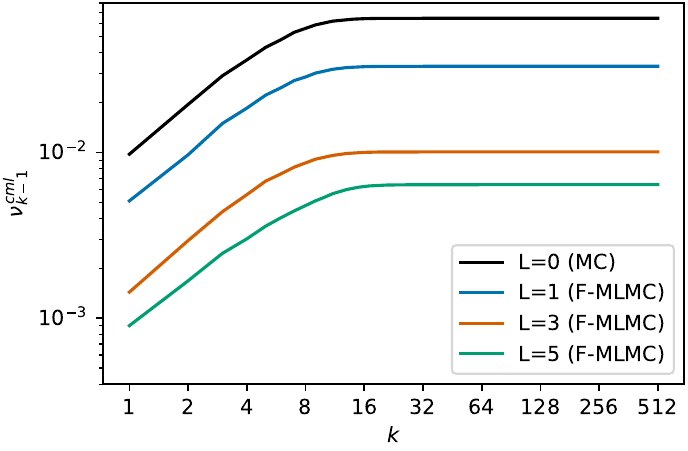}}
    \caption{Spectral variance (top) and cumulative spectral variance (bottom) of the MC estimator ($L=0$) and different MLMC (left) and F-MLMC (right) estimators ($L \in \{1,3,5\}$), for the illustrative 1D estimation problem described in \cref{sec:1d_illustration}, with length-scale $D=0.06$. 
    The finest level $L$ always corresponds to a discretization with $n_L=512$ cells, and the total budget is set to $\mathcal{C}=100\mathcal{C}_L$.
    The variance is estimated from \num{1000} estimators.}%
    \label{fig:illustration_spectral_cml_006}
\end{figure}

\Cref{fig:illustration_spectral_cml_006} reports the spectral variance $\bnu$, as well as the corresponding cumulative variance $\bnucum$, associated with the 2-, 4- and 6-level MLMC and F-MLMC estimators ($L\in\{1,3,5\}$) and with the single-level MC estimator ($L=0$), for the estimation of $\Exp[f_L(\bX_L)] = \bzero_{n_L}$, where the simulators correspond to length-scale $D=0.06$.
\Cref{fig:illustration_spectral_006,fig:illustration_filtered_spectral_006} show that, for the single-level MC estimator, most of the error arises from the first Hartley modes, which are associated with the large scales, or low frequencies, of the discretized field. 
This is confirmed in \cref{fig:illustration_cml_006,fig:illustration_filtered_cml_006}, where the cumulative variance of the MC estimator rapidly increases before reaching a plateau, showing that the variance is concentrated on the first few modes. 
For the 2-level MLMC estimator ($L=1$), the spectral variance plotted in \cref{fig:illustration_spectral_006} starts with a similar decay as that of the MC estimator in the low frequencies, before increasing again in the high frequencies.
Nevertheless, the variance is still concentrated on the first few modes, and the low-frequency components of the spectral variance are lower than those of the single-level MC estimator.
This translates into a lower plateau reached by the cumulative spectral variance, as shown in \cref{fig:illustration_cml_006}.
Moreover, the variance increase in the high-frequencies translates into a noticeable increase in the cumulative variance in the last few Hartley modes, which, in turn, results in a non-negligible increase in the total variance.
As more levels are added, the spectral variance significantly deteriorates in the high-frequencies.
While for the 4-level MLMC estimator ($L=3$), this deterioration is compensated by a lower variance in the low frequencies, this is no longer the case for the 6-level MLMC estimator ($L=5$), whose cumulative spectral variance eventually gets larger than that of the 4-level MLMC estimator, thus resulting in a larger total variance.

\Cref{fig:illustration_filtered_spectral_006,fig:illustration_filtered_cml_006} show the spectral variance $\bnu$ and the cumulative variance $\bnucum$ of the F-MLMC estimators with different grid hierarchies, corresponding to $L\in\{1,3,5\}$.
The effects of the filters are especially visible on the spectral variance plotted in \cref{fig:illustration_filtered_spectral_006}, which is significantly reduced, not only in the high frequencies, but also in the lowest ones (i.e., corresponding to the first few Hartley modes), compared to that of the unfiltered MLMC plotted in \cref{fig:illustration_spectral_006}.
The reduced error in the high frequencies (i.e., small scales) prevents the cumulative spectral variance, plotted in \cref{fig:illustration_filtered_cml_006}, from being significantly impacted in the last few Hartley modes, as opposed to that of the unfiltered MLMC, plotted in \cref{fig:illustration_cml_006}.
Furthermore, the reduced error in the low frequencies (i.e., large scales) translates into a lower plateau of the cumulative variance, hence a lower total variance, than for the unfiltered MLMC.
This is well visible in \cref{fig:illustration_total_variance_006}, which summarizes the total variance of the MC estimator and the different MLMC and F-MLMC estimators for $L\in\{1,\ldots,5\}$.
Specifically, the addition of filters leads to a 90\% reduction in total variance of the F-MLMC estimator compared to the single-level MC estimator, and to a 55\% reduction compared to the best, 5-level unfiltered MLMC estimator.

\begin{figure}[ht]
    \centering%
    \subfigure[{$D=0.06$}.\label{fig:illustration_total_variance_006}]{\includegraphics[width=.45\linewidth]{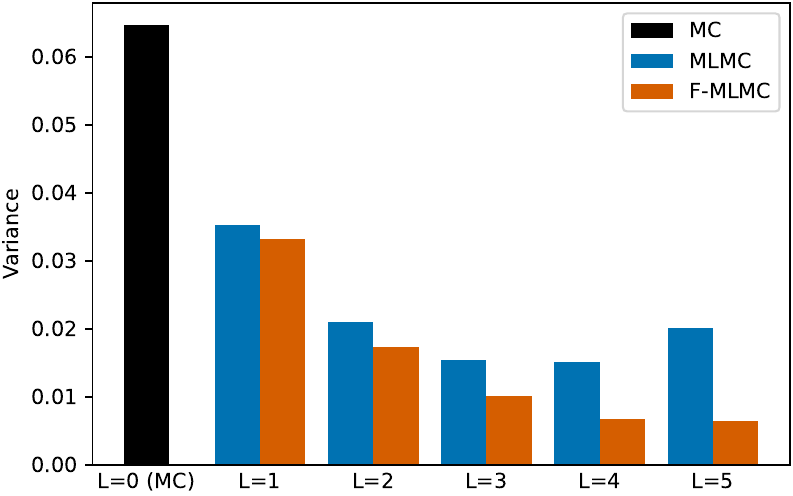}}\quad%
    \subfigure[{$D=0.01$}.\label{fig:illustration_total_variance_001}]{\includegraphics[width=.45\linewidth]{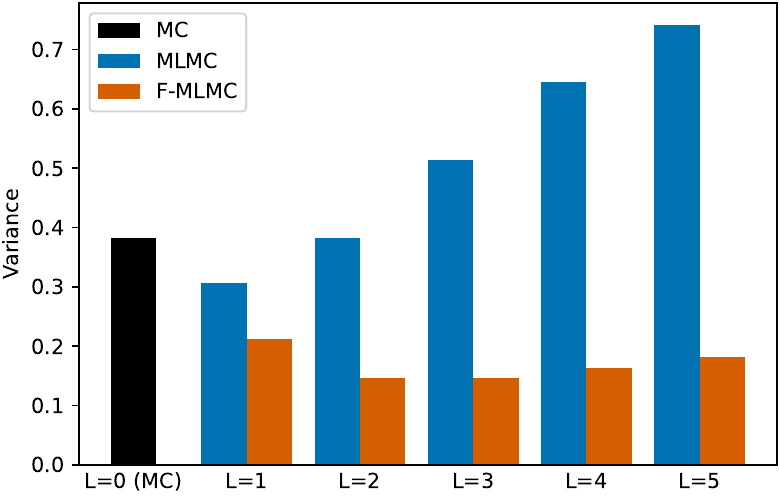}}
    \caption{Total variance of the MC estimator ($L=0$) and different MLMC and F-MLMC estimators ($L \in \{1,\ldots,5\}$), for the illustrative 1D estimation problem described in \cref{sec:1d_illustration}, with length-scale $D=0.06$ (left) and $D=0.01$ (right). 
    The finest level $L$ always corresponds to a discretization with $n_L=512$ cells, and the total budget is set to $\mathcal{C}=100\mathcal{C}_L$.
    The variance is estimated from \num{1000} estimators.}
    \label{fig:illustration_total_variance}
\end{figure}

\Cref{fig:illustration_pre_post} compares F-MLMC estimators with pre-filtering only, post-filtering only, and both pre- and post-filtering, along with the (unfiltered) MLMC and MC estimators, on the experiment with $D=0.06$.
The computational cost model for these estimators is the same as in \cref{eq:cost_model}, with $\beta=4$ for the F-MLMC estimator with pre-filtering only, and $\beta=1$ for the F-MLMC estimator with post-filtering only.
The total variances reported on \cref{fig:illustration_total_var_pre_post} show that both the pre-filtering and the post-filtering operations contribute to reducing the variance of the estimator.
The cumulative variance of the 6-level estimators ($L=5$) presented in \cref{fig:illustration_cml_pre_post} explicitly shows the effects of the pre- and post-filtering on the different frequencies of the variance.
Using an estimator with post-filtering alone leads to a reduction of the variance in the high frequencies, since the spurious high-frequencies components introduced by the prolongation operator are damped.
In addition, we observe that the post-filtering also improves the estimation of the low frequencies.
On the other hand, the pre-filtering reduces the frequencies that cannot be represented on the coarse grids and that lead to spurious low-frequency components, thus improving the variance in those low frequencies, corresponding to the very first entries of the cumulative variance vector $\bnucum$.
The joint use of pre- and post-filtering combines both benefits, further reducing the variance over the entire range of frequencies.

\begin{figure}[!ht]
    \centering%
    \subfigure[{Total variance}.\label{fig:illustration_total_var_pre_post}]{\includegraphics[width=.46\linewidth]{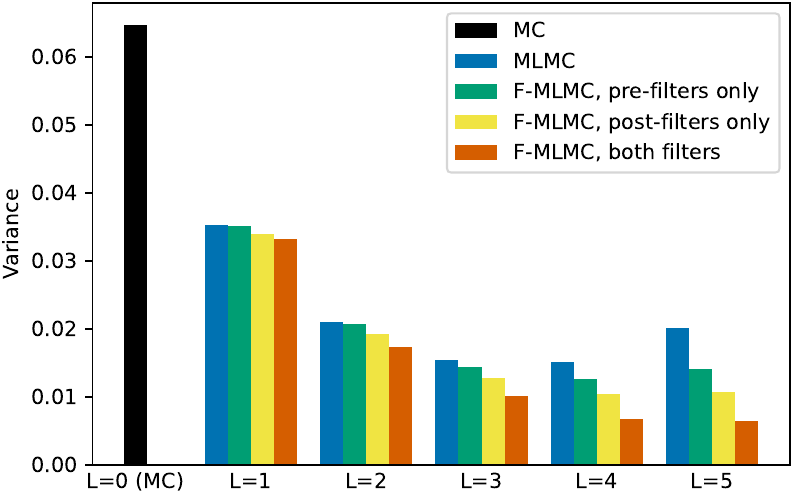}}\quad%
    \subfigure[{Cumulative variance}.\label{fig:illustration_cml_pre_post}]{\includegraphics[width=.44\linewidth]{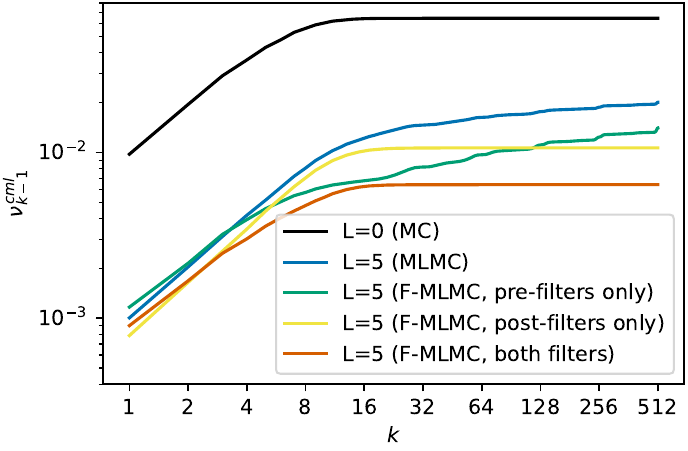}}
    \caption{Total variance (left) of the MC estimator ($L=0$) and different MLMC and F-MLMC estimators,  ($L \in \{1,\ldots,5\}$), and cumulative spectral variance (right) of the different estimators with $L=5$.
    Both figures are for the illustrative 1D estimation problem described in \cref{sec:1d_illustration}, with length-scale $D=0.06$.
    The finest level $L$ always corresponds to a discretization with $n_L=512$ cells, and the total budget is set to $\mathcal{C}=100\mathcal{C}_L$.
    The variance is estimated from \num{1000} estimators.}
    \label{fig:illustration_pre_post}
\end{figure}

It should be noted that the length-scale $D=0.06$ used for the experiments presented above induces output fields mostly composed of large scales (low frequencies). 
Decreasing its value increases the frequencies of the output field, thus introducing smaller scales. 
\Cref{fig:illustration_spectral_cml_001} shows the spectral and cumulative variance of the MC, MLMC and F-MLMC estimators for a smaller length-scale $D=0.01 \approx 5 n_L^{-1}$.
We observe that the decay of the spectral variance of the MC estimator as the frequency increases is slower than for $D=0.06$, so that the variance is concentrated on a wider low-frequency range.
For the MLMC estimators, we see that the significant deterioration of the variance in the high-frequencies is no longer compensated by a better estimation in the low frequencies, except for the 2-level estimator, which remains slightly better than the single-level estimator in terms of total variance. Again, the addition of filters improves the multilevel estimation in both the low and the high frequencies, which in turn benefits the cumulative and thus the total variance.
Filtering is here even more beneficial than for $D=0.06$, in the sense that the F-MLMC estimator has a significantly lower variance than the single-level MC estimator, as shown in \cref{fig:illustration_total_variance_001}, which the unfiltered MLMC estimator fails to achieve.
Specifically, the variance of the 4-level F-MLMC estimator is reduced by about 60\% compared to the single-level MC estimator, and by more than 50\% compared to the best, 2-level unfiltered MLMC estimator.

\begin{figure}[!ht]
    \centering%
    \subfigure[Spectral variance {$\bnu$} of MC and MLMC estimators.]{\includegraphics[width=.45\linewidth]{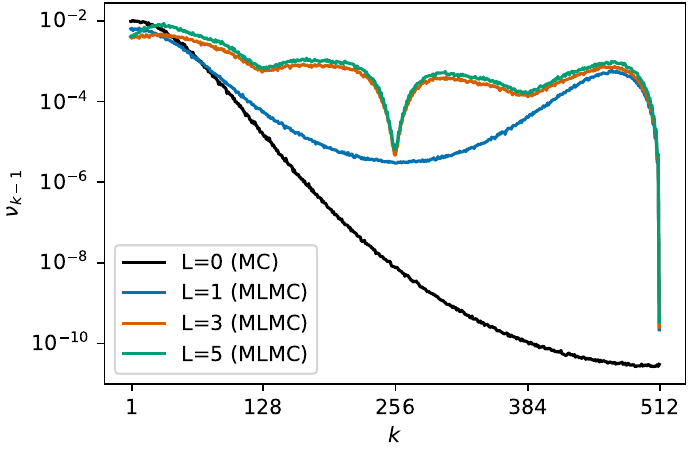}}\quad%
    \subfigure[Spectral variance {$\bnu$} of MC and F-MLMC estimators.]{\includegraphics[width=.45\linewidth]{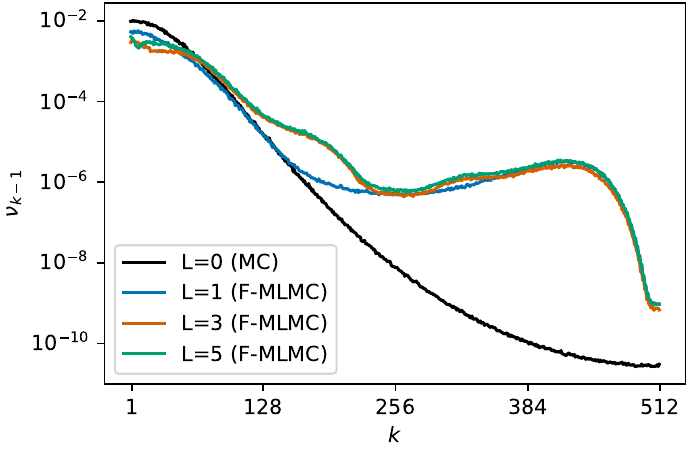}}\\
    \subfigure[Cumulative spectral variance {$\bnucum$} of MC and MLMC estimators.]{\includegraphics[width=.44\linewidth]{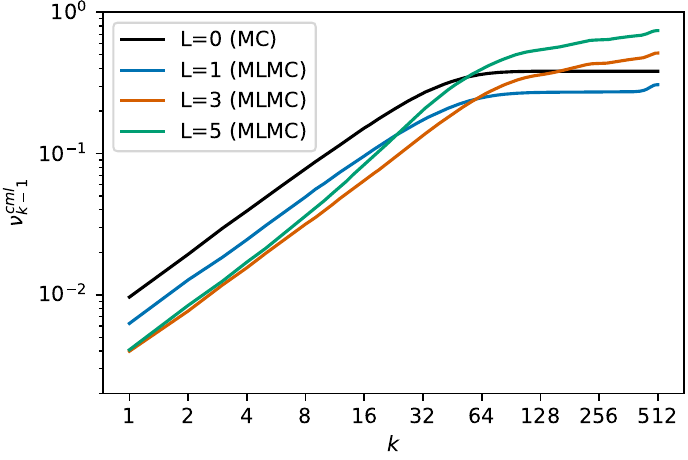}}\quad%
    \subfigure[Cumulative spectral variance {$\bnucum$} of MC and F-MLMC estimators.]{\includegraphics[width=.44\linewidth]{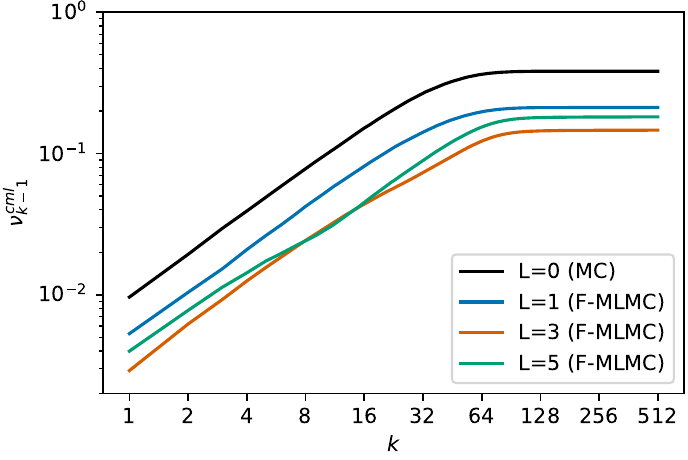}}
    \caption{Same as \cref{fig:illustration_spectral_cml_006} but with length-scale $D=0.01$.}
    \label{fig:illustration_spectral_cml_001}
\end{figure}

These experiments highlight that the effect of spurious high frequencies caused by grid transfer operations is detrimental to the MLMC estimator.
The addition of pre- and post-filtering operations is necessary to mitigate these effects, so that the multilevel estimator can reach its full potential.

\subsection{2D application}\label{sec:2D_application}

We now apply the MLMC and F-MLMC methodology to the variance estimation problem described in \cref{sec:problem_presentation}, i.e., the estimation of $\btheta$ in \cref{eq:theta}, in a 2D setting with more complex diffusion operators than those used in the 1D illustration of the previous section.
Specifically, the diffusivity field $\bK$ is now specified to be non-uniform, making the variance field also non-uniform. 
The considered domain is 
$\mathcal{D} = (0,2) \times (0,1) \subset \mathbb{R}^2$.
The boundary conditions are chosen periodic along both directions.
The 2D diffusivity tensor field $\bK$ is chosen to be diagonal and heterogeneous
\begin{equation}\label{eq:2d_diffusion_tensor_field}
    \bK =
    \frac{1}{2m-4}
    \begin{bmatrix}
        D_{11}^2 & 0 \\
        0 & D_{22}^2
    \end{bmatrix},
\end{equation}
where, for $i=1,2$, $D_{ii} \colon \mathcal{D} \to \mathbb{R}$ represents a length-scale field in the $i$-th direction. 
As previously, we let $m=2q=10$.
We model $D_{11}=\zeta(\omega_1)$ and $D_{22}=\zeta(\omega_2)$ as 
two different realizations of a 2D, periodic Gaussian random field $\zeta$ over $\mathcal{D}$ of uniform mean $\mu_\zeta$, and of quasi-Gaussian covariance structure with uniform variance $\sigma_\zeta^2 = (\mu_\zeta/5)^2 = 0.04 \mu_\zeta^2$ and uniform length-scale $D_\zeta$.
In the following experiments, two sets of parameters are considered for $\zeta$, namely $(\mu_\zeta = 0.12,D_\zeta = 0.2)$ and $(\mu_\zeta = 0.02,D_\zeta = 0.04)$.
The corresponding realizations used in the subsequent experiments are depicted in \cref{fig:Heterogeneous_D}.

\begin{figure}[!ht]
    \centering%
    \subfigure[{$\mu_\zeta = 0.12$} and {$D_\zeta = 0.2$}.\label{fig:Heterogeneous_D_012}]{\includegraphics[width=.45\linewidth]{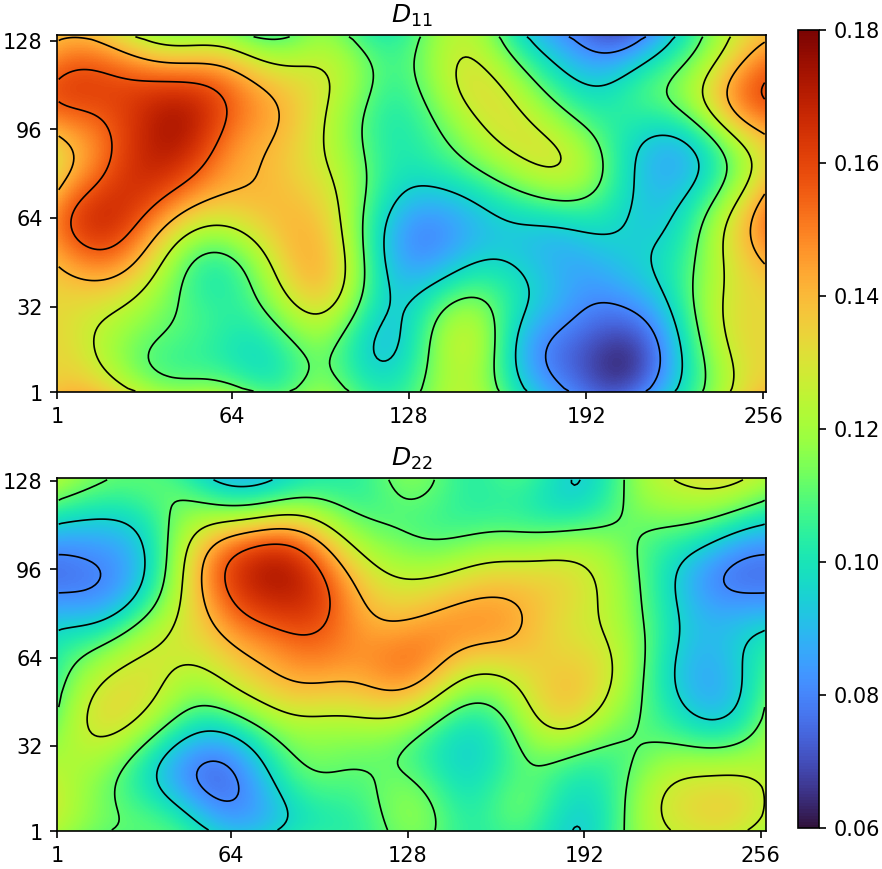}}\quad%
    \subfigure[{$\mu_\zeta = 0.02$} and {$D_\zeta = 0.04$}.\label{fig:Heterogeneous_D_002}]{\includegraphics[width=.46\linewidth]{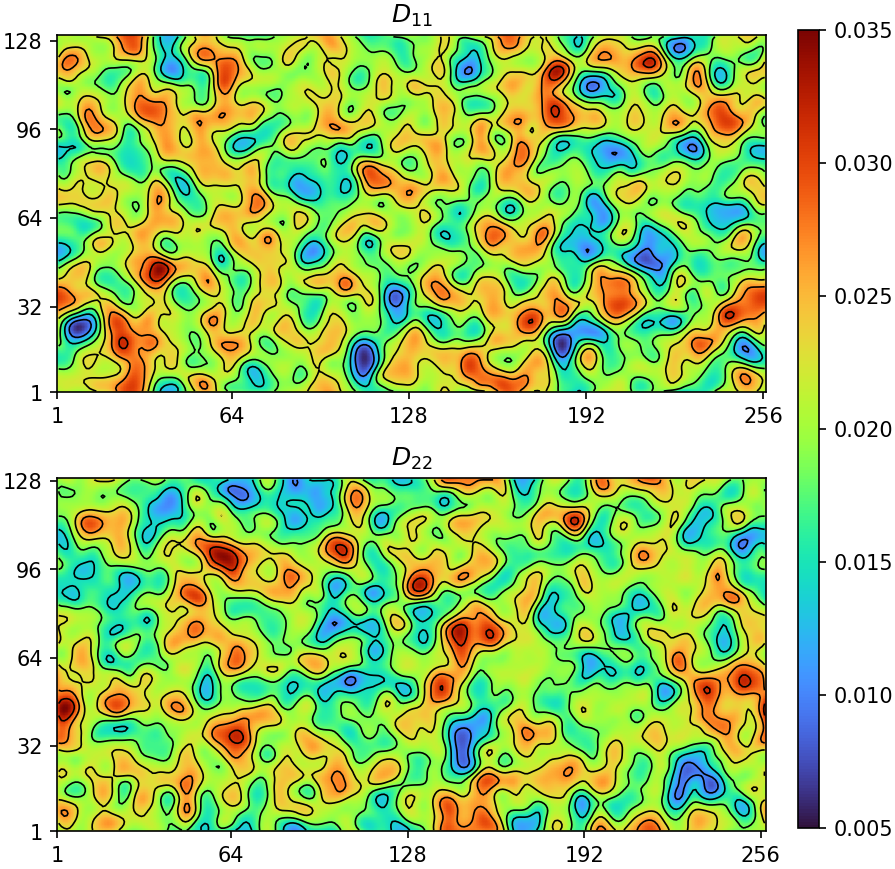}}
    \caption{Length-scale fields $D_{11}$ (top) and $D_{22}$ (bottom) used in the reported experiments for the definition of $\bK$ in \cref{eq:2d_diffusion_tensor_field}, with $\mu_\zeta = 0.12$ and $D_\zeta = 0.2$ (left), and with $\mu_\zeta = 0.02$ and $D_\zeta = 0.04$ (right).
    }
    \label{fig:Heterogeneous_D}
\end{figure}

The diffusion operator in \cref{eq:elliptic} is discretized on Cartesian grids of size $n_\ell = n_\ell^x \times n_\ell^y$, with the length-scale fields $D_{11}$ and $D_{22}$ discretized at edge centers, and the solution discretized at cell centers, as depicted in \cref{fig:2d_discr}.
The coarse operators $\bA_{\ell}$ are based on restrictions of $D_{11}$ and $D_{22}$, obtained by recursively averaging, at each point of a given coarse level, the values of the two nearest points of the immediately finer length-scale field, from the finest level $L$ down to the desired level $\ell < L$.
The finest grid considered here is composed of $n_L = 256\times 128$ cells. 
Three coarser grids are used with a uniform coarsening factor of 4, i.e., $n_{L-1} = 128\times64$, $n_{L-2} = 64\times32$ and $n_{L-3} = 32\times16$.
The resulting Gram matrix on level $\ell$ is given by 
$\bW_\ell = 2 n_\ell^{-1} \bI_{n_\ell}$.
The discrete operators $\bA_\ell \in \R^{n_\ell \times n_\ell}$ are designed to apply to and return vectors of size $n_\ell$ whose entries are associated with cell centers that are sorted by increasing $x$-coordinate first, then by increasing $y$-coordinate.
In other words, the entry indexed by $k=jn_\ell^x+i$ in such vectors is associated with a cell center located at $(x_i, y_j) \in \R^2$, where $x_i \bydef 2(i + 1/2) / n_\ell^x$ and $y_j \bydef (j + 1/2) / n_\ell^y$, for $i=0,\ldots,n_\ell^x-1$ and $j=0,\ldots,n_\ell^y-1$.

\begin{figure}[!ht]\centering%
\newcommand{\xedgecolor}{orange}%
\newcommand{\yedgecolor}{teal}%
\newcommand{\centercolor}{black}%
\DeclareRobustCommand\mytikzdot{\tikz{\node[fill=\centercolor,circle,inner sep=.48ex] {};}}%
\DeclareRobustCommand\mytikzdiam{\tikz{\node[fill=\yedgecolor,diamond,inner sep=.4ex] {};}}%
\DeclareRobustCommand\mytikzrect{\tikz{\node[fill=\xedgecolor,rectangle,inner sep=.6ex] {};}}%
\DeclareRobustCommand\mytikzdotc{\tikz{\node[draw=\centercolor,fill=none,circle,inner sep=.48ex] {};}}%
\DeclareRobustCommand\mytikzdiamc{\tikz{\node[draw=\yedgecolor,fill=none,diamond,inner sep=.4ex] {};}}%
\DeclareRobustCommand\mytikzrectc{\tikz{\node[draw=\xedgecolor,fill=none,rectangle,inner sep=.6ex] {};}}%
    \resizebox{0.7\linewidth}{!}{%
    \begin{tikzpicture}[scale=1.4,font=\footnotesize]%
    \draw[step=1,semithick,gray,dotted] (1.75,1.75) grid (6.25,4.25);
    \draw[step=2,semithick,darkgray] (1.75,1.75) grid (6.25,4.25);
    \foreach \x in {2.5,...,5.5}{
        \foreach \y in {2.5,...,3.5}{
            \node[fill=\centercolor,circle,inner sep=2pt] at (\x,\y) {};
        }
    }
    \foreach \x in {3,5}{
        \foreach \y in {3}{
            \node[draw=\centercolor,semithick,circle,inner sep=2pt] at (\x,\y) {};
        }
    }
    \foreach \x in {2,...,6}{
        \foreach \y in {2.5,...,3.5}{
            \node[fill=\xedgecolor,rectangle,inner sep=2.6pt] at (\x,\y) {};
        }
    }
    \foreach \x in {2,4,6}{
        \foreach \y in {3}{
            \node[draw=\xedgecolor,semithick,rectangle,inner sep=2.6pt] at (\x,\y) {};
        }
    }
    \foreach \x in {2.5,...,5.5}{
        \foreach \y in {2,...,4}{
            \node[fill=\yedgecolor,diamond,inner sep=1.8pt] at (\x,\y) {};
        }
    }
    \foreach \x in {3,5}{
        \foreach \y in {2,4}{
            \node[draw=\yedgecolor,semithick,diamond,inner sep=1.8pt] at (\x,\y) {};
        }
    }
    %
    \coordinate (leg) at (6.75,3);
    \node[fill=\yedgecolor,diamond,inner sep=1.8pt, anchor=center, above=1ex of leg, 
        label={right:{Fine $D_{22}$ points}}] (fdy) {};
    \node[fill=\xedgecolor,rectangle,inner sep=2.6pt, anchor=center, above=1.5ex of fdy, 
        label={right:{Fine $D_{11}$ points}}] (fdx) {};
    \node[fill=\centercolor,circle,inner sep=2pt, anchor=center, above=1.5ex of fdx,
        label={right:{Fine solution points}}] (fx) {};
    \node[draw=\centercolor,semithick,circle,inner sep=2pt, anchor=center, below=1ex of leg,
        label={right:{Coarse solution points}}] (cx) {};
    \node[draw=\xedgecolor,semithick,rectangle,inner sep=2.6pt, anchor=center, below=1.5ex of cx,
        label={right:{Coarse $D_{11}$ points}}] (cdx) {};
    \node[draw=\yedgecolor,semithick,diamond,inner sep=1.8pt, anchor=center, below=1.5ex of cdx,
        label={right:{Coarse $D_{22}$ points}}] (cdy) {};
\end{tikzpicture}
    }
    \caption{Illustration of the 2D discretization of the differential operator ${\nabla \cdot \bK \nabla}$ in \cref{eq:elliptic} on a coarse uniform grid, represented by solid lines, whose sub-tessellation into a finer, geometrically nested, uniform grid is represented by dotted lines.
    The discretized solution points on the fine grid (respectively, on the coarse grid) are located at the center of the fine (respectively, coarse) cells and represented by~\mytikzdot{} (respectively, \mytikzdotc{}).
    The fine (respectively, coarse) discretized scalar field $D_{11}$ is defined at points located on the fine (respectively, coarse) vertical edges and represented by~\mytikzrect{} (respectively, \mytikzrectc{}).
    The fine (respectively, coarse) discretized scalar field $D_{22}$ is defined at points located on the fine (respectively, coarse) horizontal edges and represented by~\mytikzdiam{} (respectively, \mytikzdiamc{}).}
    \label{fig:2d_discr}
\end{figure}

The hierarchy of simulators $(f_\ell)_{\ell=0}^L$ is defined through \cref{eq:f_simu_restr_prolong_fine} by $\tilde{f}_{\ell} \colon \bx_\ell \mapsto (\bA_\ell \bx_\ell) \odot (\bA_\ell \bx_\ell)$, and we are interested in the multilevel estimation of the fine discretized field $\btheta_L \bydef \Exp[f_L(\bX_L)]$.
With the Cartesian ordering of the unknowns described above, the 2D prolongation and restriction operators may be constructed as the Kronecker product of their 1D counterpart, defined in \cref{eq:def_prolongation_restriction}, in the $x$ and $y$ directions.
The 2D grid transfer operators defined in this manner satisfy \cref{eq:prop_restr_prolong}.
Likewise, the second-order 2D Shapiro filter is defined as the Kronecker product of two 1D, second-order Shapiro filters defined in \cref{eq:def_filtering_operator}.
The cost model is still defined as in \cref{eq:cost_model}, with the 2D refinement factor $s=4$.
In two dimensions, the application of $(\bI_{n_\ell} - \bDelta_\ell)$ now requires 5 floating-point operations per cell on level $\ell$, consistent with a 5-point stencil discretization, so that $\alpha=500$.
The unfiltered 2D restriction still corresponds to $\beta=1$, while 6 additional operations are required to apply the 2D pre-filtering operator, consistent with two sweeps (one horizontal and one vertical) of 1D filters, each represented by a 3-point stencil, hence $\beta=7$ for F-MLMC.
\Cref{tab:cost_model_2d} summarizes the normalized costs $\mathcal{C}_{\ell} / \mathcal{C}_{L}$ and cost reduction factors $\mathcal{C}_{\ell}/ \mathcal{C}_{\ell-1}$ of the simulators $f_\ell$ (MLMC) and $\bar{f}_\ell$ (F-MLMC) for this 2D application.
The baseline for the following experiments is a crude, single-level MC ($L=0$) computed with a sample of size 100; hence the total budget is $\mathcal{C} = 100 \mathcal{C}_L$.

\begin{table}[!ht]%
  \centering%
  {\sisetup{round-mode=places, round-precision=2, round-pad=true}%
    \begin{tabular}{ccccc}
      \toprule
      & \multicolumn{2}{c}{\textbf{MLMC}} & \multicolumn{2}{c}{\textbf{F-MLMC}}\\
      & \multicolumn{2}{c}{($\gamma = 1/375$)} & \multicolumn{2}{c}{($\gamma = 7/375$)}\\
      \cmidrule(lr){2-3}\cmidrule(lr){4-5}
      {$\ell$} & {$\mathcal{C}_{\ell} / \mathcal{C}_{L}$} & {$\mathcal{C}_{\ell}/ \mathcal{C}_{\ell-1}$}
      & {$\mathcal{C}_{\ell} / \mathcal{C}_{L}$} & {$\mathcal{C}_{\ell}/ \mathcal{C}_{\ell-1}$}\\
      \midrule
      $L$   & \num{1}       & \num{3.96825396825} & \num{1}      & \num{3.787878787879} \\
      $L-1$ & \num{0.252}   & \num{3.87692307692} & \num{0.264}  & \num{3.3} \\
      $L-2$ & \num{0.065}   & \num{3.56164383562} & \num{0.08}   & \num{2.352941176471}\\
      $L-3$ & \num{0.01825} & N/A                 & \num{0.034}  & N/A \\
      \bottomrule
    \end{tabular}%
  }%
  \caption{Normalized costs $\mathcal{C}_{\ell} / \mathcal{C}_{L}$ and cost reduction factors $\mathcal{C}_{\ell}/ \mathcal{C}_{\ell-1}$ of the simulators $f_\ell$ (MLMC) and $\bar{f}_\ell$ (F-MLMC) corresponding to the 2D example of \cref{sec:2D_application}.}
  \label{tab:cost_model_2d}
\end{table}

\begin{figure}[!ht]
    \centering
    \includegraphics[width=.85\linewidth]{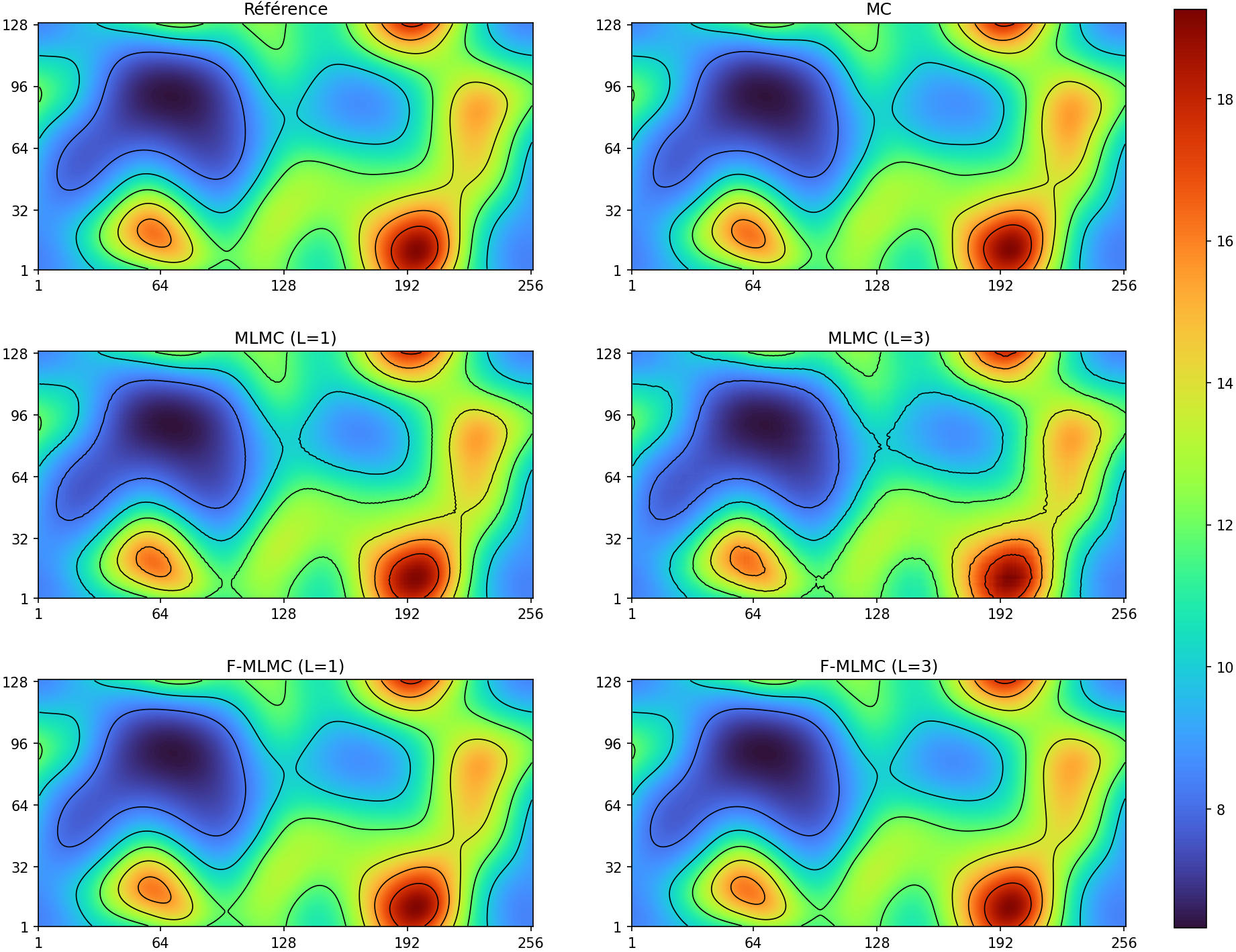}
    \caption{The top-left sub-figure shows the exact discretized field $\btheta$ on level $L$, while the other sub-figures depict the expectation of the single-level MC estimator (top-right), the expectation of the 2- and 4-level MLMC estimators (middle-left and middle-right), and the expectation of the 2- and 4-level F-MLMC estimators (bottom-left and bottom-right).
    The tensor field $\bK$ corresponds to $\mu_\zeta=0.12$ and $D_\zeta=0.2$ (\cref{fig:Heterogeneous_D_012}).
    The expectation is approximated from 500 estimators, each constructed with a computational budget $\mathcal{C} = 100 \mathcal{C}_L$.}
    \label{fig:2D_estimators_expectation}
\end{figure}

As discussed in \cref{sec:problem_presentation}, the $k$-th entry $(\btheta_L)_k$ of the exact discretized field $\btheta_L$ can be computed explicitly as 
$(\btheta_L)_k = (\bL_L \be_k)_k$, for $k=1,\ldots,n_L$,
where $\bL_L$ denotes the discrete diffusion operator defined in \cref{eq:diffusion_op} on the finest level $L$ and $\be_k$ denotes the $k$-th canonical basis vector of $\R^{n_L}$ (i.e., the $k$-th column of $\bI_{n_L}$).
The top-left sub-figure of \cref{fig:2D_estimators_expectation} shows the variance field $\btheta_L$ for the tensor field $\bK$ obtained by \cref{eq:2d_diffusion_tensor_field} from the length-scale fields $D_{11}$ and $D_{22}$ with parameters $\mu_\zeta=0.12$ and $D_\zeta=0.2$ depicted in \cref{fig:Heterogeneous_D_012}.
The other sub-figures represent the expectation of the compared estimators, namely the single-level MC estimator, and the 2- and 4-level (F-)MLMC estimators.
The expectation is approximated from 500 estimators, each constructed with a computational budget $\mathcal{C} = 100 \mathcal{C}_L$.
These figures confirm a key property of the MC and (F-)MLMC estimators, namely that they are unbiased.
Indeed, their expectations visually coincide (up to statistical error due to the estimation) with the reference, i.e., $\Exp[\hat{\btheta}] = \btheta$.
Consequently, the MSE solely consists of the variance of the estimators.

\begin{figure}[!ht]
    \centering
    \includegraphics[width=.85\linewidth]{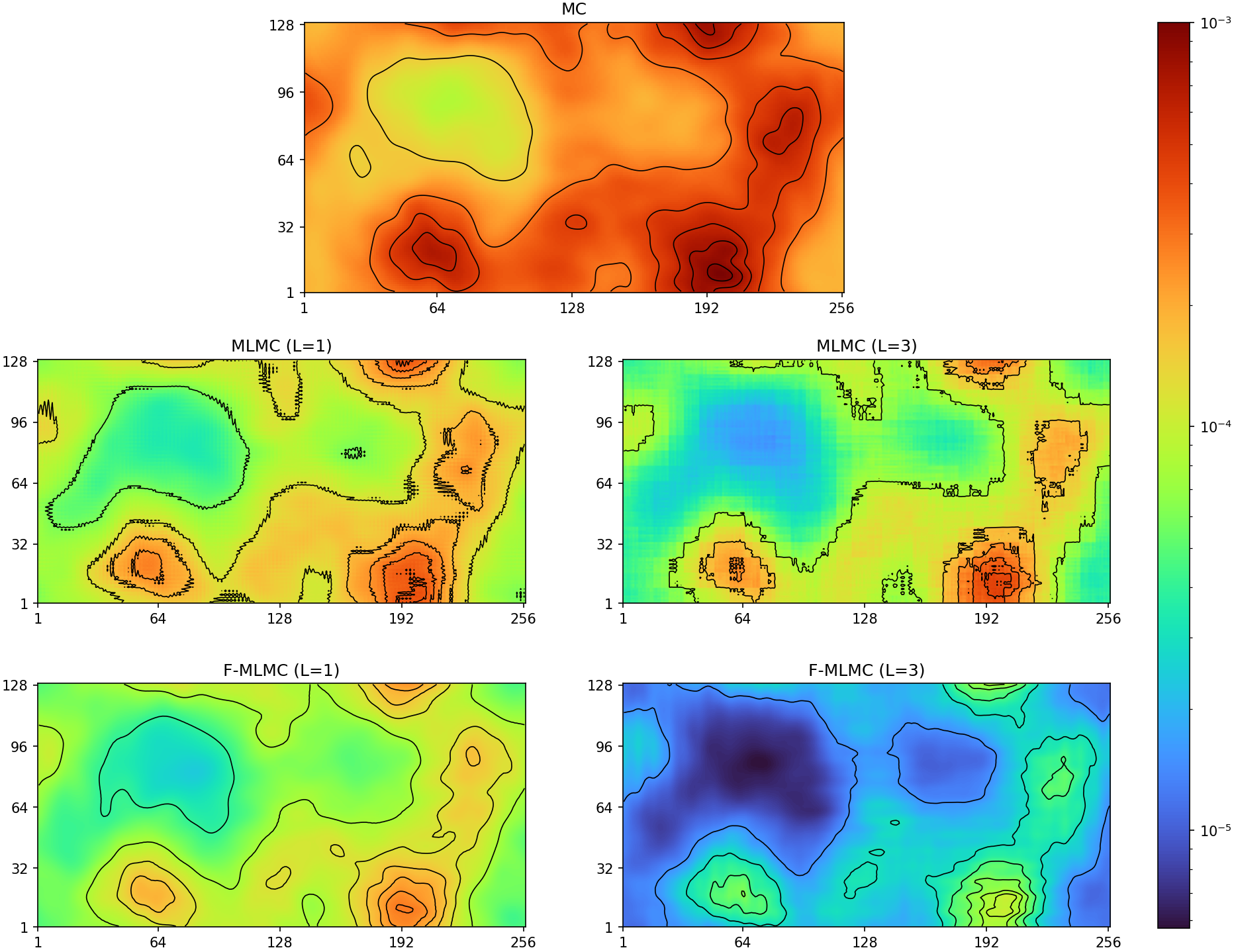}
    \caption{Variance of the MC estimator (top), of the 2- and 4-level MLMC estimators (middle-left and middle-right), and of the 2- and 4-level F-MLMC estimators (bottom-left and bottom-right). 
    The tensor field $\bK$ corresponds to $\mu_\zeta=0.12$ and $D_\zeta=0.2$ (\cref{fig:Heterogeneous_D_012}).
    The variance is approximated from 500 estimators, each constructed with a computational budget $\mathcal{C} = 100 \mathcal{C}_L$.}
    \label{fig:2D_estimators_variance}
\end{figure}

\Cref{fig:2D_estimators_variance} shows the variance of the considered estimators.
Although the variance of the MLMC estimators (middle row) visually is lower than that of the MC estimator (top row), we clearly observe high-frequency fluctuations in the variance field, both for the 2-level (left) and the 4-level (right) unfiltered MLMC estimators.
These high-frequency components of the variance may have significant consequences on individual estimations of $\btheta$.
Indeed, while the MLMC estimations will, on average, match the desired field (owing to the unbiasedness of the estimators), individual estimations will be polluted by high-frequency error components because of the large high-frequency components of the variance.
The bottom row of \cref{fig:2D_estimators_variance} demonstrates, at least visually, that the addition of filtering effectively damps these high-frequency components and reduced the variance.
These observations are confirmed by \cref{fig:2D_estimators_spectral_variance}, which shows the spectral decomposition of the variance in the Hartley space, allowing for the visualization of the contribution of each scale (or frequency) to the variance.
The Hartley matrix $\bH_L$ used to project the 2D variance (discretized) fields onto the Hartley spectral space is defined as the Kronecker product of two 1D Hartley matrices defined in \cref{eq:Hartley_basis}.
The 2D spectral variance of an unbiased estimator $\hat{\btheta}_L$ of $\btheta_L$, representing either the MC estimator on the finest level $L$, or the MLMC or F-MLMC estimator, is defined as $\bnu = (\nu_k)_{k=0}^{n_L-1} \in \R^{n_L}$, with
\begin{equation}\label{eq:2D_spectral_var_field}
    \forall  k = 0, \dots, n_L-1,
    \quad
    \nu_k 
    \bydef 4 n_L^{-2} \Exp[ ((\bh^L_k)^{\transp} (\hat{\btheta}_L - \btheta_L))^2 ]
    =
    4 n_L^{-2} \Var[ (\bh^L_k)^{\transp} \hat{\btheta}_L ],
\end{equation}
where $\bh^L_k$ denotes the $k$-th column of $\bH_L$.
Again, the columns of $\bH_L$ are re-ordered so that, in \cref{fig:2D_estimators_spectral_variance},
the frequencies in the $x$ and $y$ directions increase along the associated axes, starting from the lower-left corner, corresponding to low frequencies (large scales).
Specifically, the 2D Hartley matrix with reordered columns is constructed as the Kronecker product of two 1D Hartley matrices, each with reordered columns.
We observe that the variance of the unfiltered MLMC estimators (middle row) clearly exhibits larger high-frequency components than the single-level MC estimator (top row), with values of the same order of magnitude as the low-frequency components.
This not only translates into noticeable high-frequency fluctuations of the variance field, as evidenced in \cref{fig:2D_estimators_variance}, but it may also deteriorate the overall variance, as was the case in the 1D illustration (see \cref{sec:1d_illustration}).

\begin{figure}[!ht]
    \centering
    \includegraphics[width=.85\linewidth]{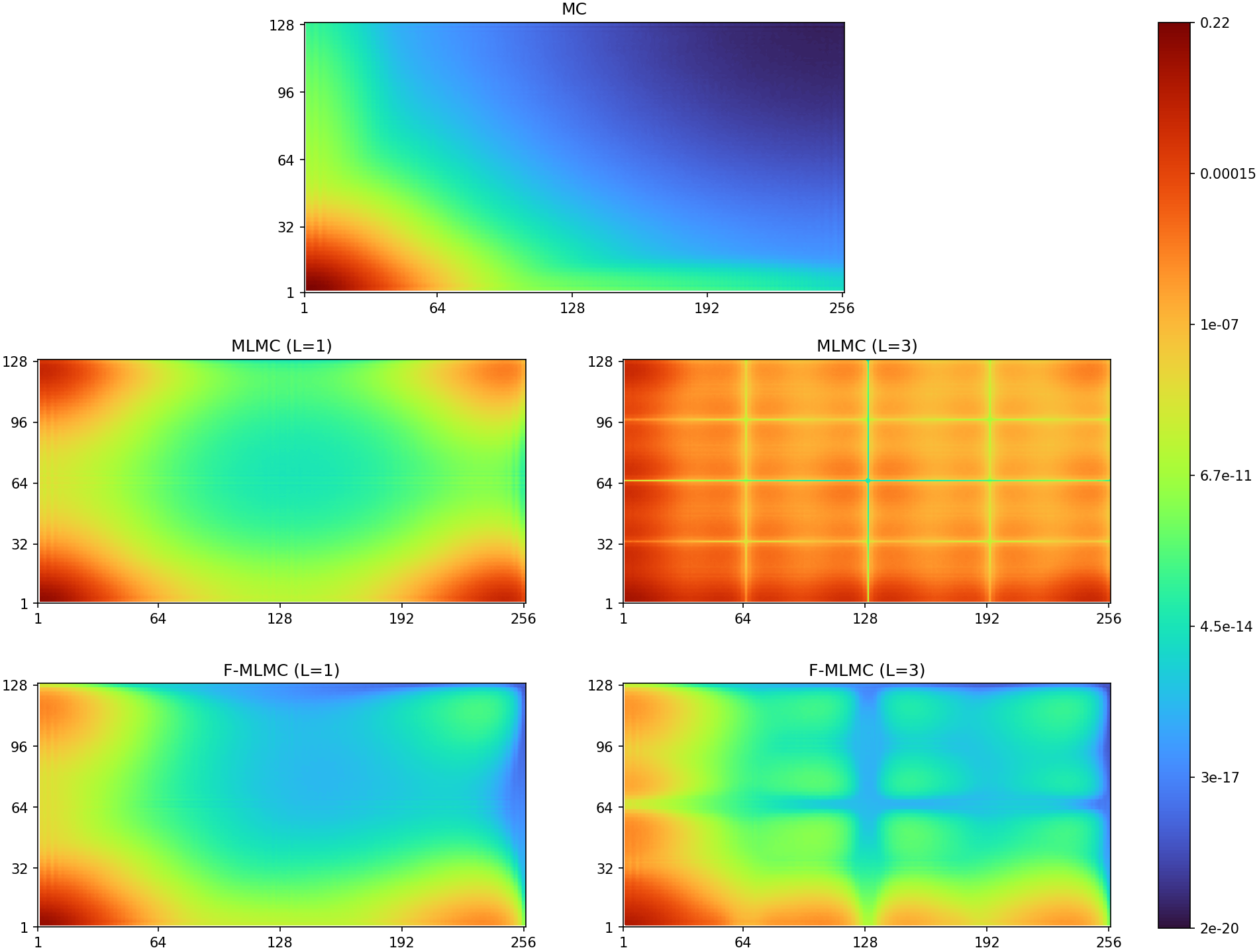}
    \caption{Spectral variance $\bnu$ of the MC estimator (top), of the 2- and 4-level MLMC estimators (middle-left and middle-right), and of the 2- and 4-level F-MLMC estimators (bottom-left and bottom-right). 
    The tensor field $\bK$ corresponds to $\mu_\zeta=0.12$ and $D_\zeta=0.2$ (\cref{fig:Heterogeneous_D_012}).
    The variance is approximated from 500 estimators, each constructed with a computational budget $\mathcal{C} = 100 \mathcal{C}_L$.}
    \label{fig:2D_estimators_spectral_variance}
\end{figure}

\begin{figure}[!ht]
    \centering%
    \subfigure[{$\bK$} with {$\mu_\zeta=0.12$} and {$D_\zeta=0.2$} (\cref{fig:Heterogeneous_D_012}).\label{fig:2D_cumulative_var_012}]{
        \parbox{\linewidth}{%
            \includegraphics[width=.45\linewidth]{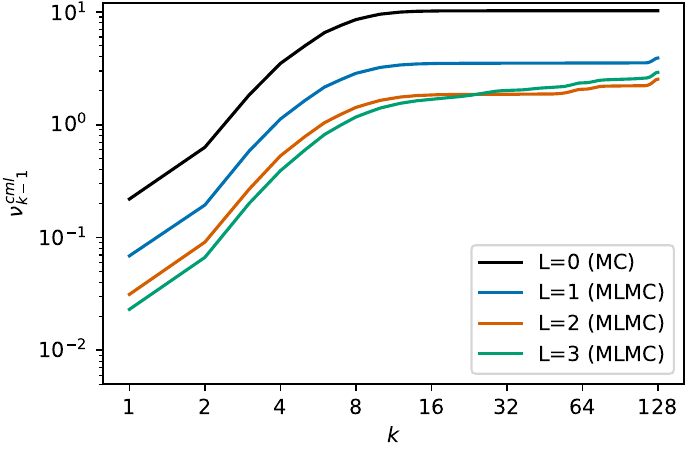}\quad%
            \includegraphics[width=.45\linewidth]{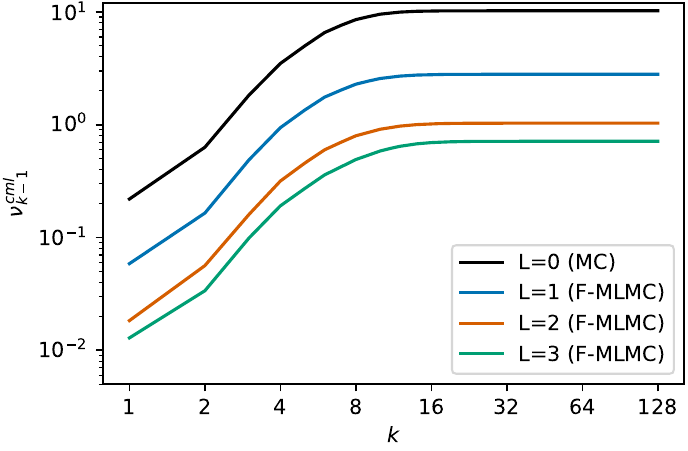}%
        }%
    }\\%
    \subfigure[{$\bK$} with {$\mu_\zeta=0.02$} and {$D_\zeta=0.04$} (\cref{fig:Heterogeneous_D_002}).\label{fig:2D_cumulative_var_002}]{
        \parbox{\linewidth}{%
            \includegraphics[width=.45\linewidth]{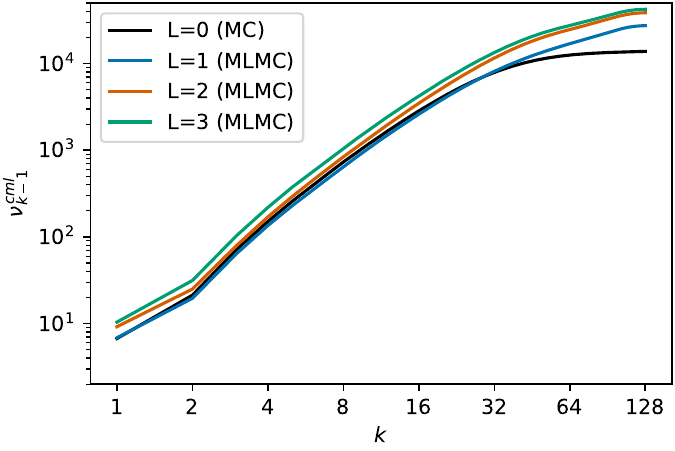}\quad%
            \includegraphics[width=.45\linewidth]{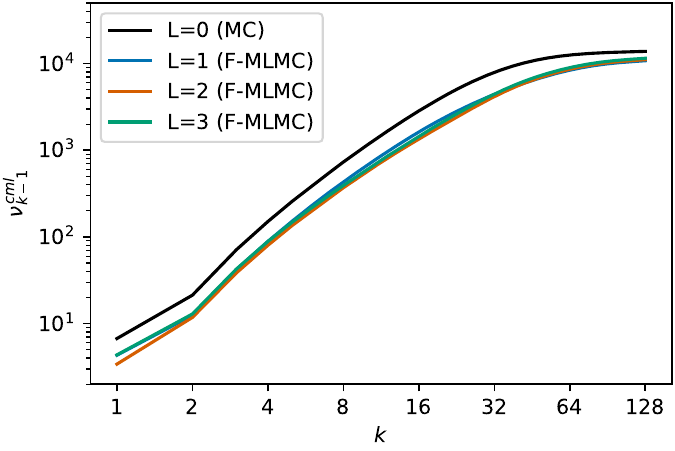}%
        }%
    }%
    \caption{Cumulative variance of the MC estimator ($L=0$) and of different MLMC estimators (left) and F-MLMC estimators (right) with $L \in \{1,2,3\}$. The tensor field $\bK$ corresponds to $\mu_\zeta=0.12$ and $D_\zeta=0.2$ (top), and $\mu_\zeta=0.02$ and $D_\zeta=0.04$ (bottom).
    The variance is approximated from 500 estimators, each constructed with a computational budget $\eta = 100 \mathcal{C}_L$.}
    \label{fig:2D_cumulative_var}
\end{figure}

With the Cartesian indexing and the re-ordering of the Hartley basis vectors described earlier, the 2D cumulative variance $\bnucum = (\nucum_k)_{k=0}^{n_L^y-1} \in \R^{n_L^y}$ is computed by adding the components of the 2D spectral variance \cref{eq:2D_spectral_var_field} shown in \cref{fig:2D_estimators_spectral_variance} in rectangle patterns 
starting with the bottom-left corner.
Specifically,
\begin{equation}
    \nucum_k
    = \sum_{j=0}^k \sum_{i=0}^{2k+1} \nu_{j n_L^x + i},
    \quad
    k = 0, \dots, n_L^y-1.
\end{equation}
As such, the first components of $\bnucum$ represent the cumulative variance associated with the low frequencies (large scales), while the last component $\nucum_{n_L^y-1}$ coincides with the total variance of the estimator.
\Cref{fig:2D_cumulative_var} presents the cumulative variance of the MLMC and F-MLMC estimators for $\bK$ corresponding to $(\mu_\zeta = 0.12,D_\zeta = 0.2)$ and $(\mu_\zeta = 0.02,D_\zeta = 0.04)$.
In the first case (\cref{fig:2D_cumulative_var_012}), for both the MC, the MLMC and the F-MLMC estimators, most of the variance is concentrated in the lower frequencies.
Although the low-frequency contribution to the variance is significantly reduced by the MLMC estimator, a non-negligible contribution of the high frequencies to the variance is noticeable.
Adding filters reduces the error in the high frequencies, as was observed previously in the spectral decomposition of the variance (\cref{fig:2D_estimators_spectral_variance}), but it also reduces the error on the lower frequencies, thus leading to lower total variance.
In the second case (\cref{fig:2D_cumulative_var_002}), corresponding to a tensor field $\bK$ with smaller scales (see \cref{fig:Heterogeneous_D_002}), the MLMC estimators deteriorate the variance compared to the crude MC estimator.
The addition of a coarser grid ($L=1$) and the corresponding grid transfer operators induce significant variance in the high-frequency components, leading to an increased total variance.
The degradation is all the more pronounced as coarser grids are added ($L=2$ and $L=3$).
We observe that the addition of filters mitigates these effects and allows the F-MLMC estimators to produce an effective reduction of the variance.
The addition of coarser grids ($L=2$ and $L=3$) does not bring further improvement, but does not deteriorate the total variance either.
The low-frequency (large-scale) components are already well-captured by the two finer grids.
This can be explained by the fact that the spectral content of the samples is shifted to higher frequencies.
In such a case, a different grid hierarchy, starting from a finer grid, would be more appropriate, although finer discretizations may not always be available or affordable in an operational context.
These results are summarized in terms of total variance in \cref{fig:2D_total_var}.

\begin{figure}[!ht]
    \subfigure[$\bK$ with $\mu_\zeta=0.12$ and $D_\zeta=0.2$ (\cref{fig:Heterogeneous_D_012}).\label{fig:2D_total_var_012}]{\includegraphics[width=.44\linewidth]{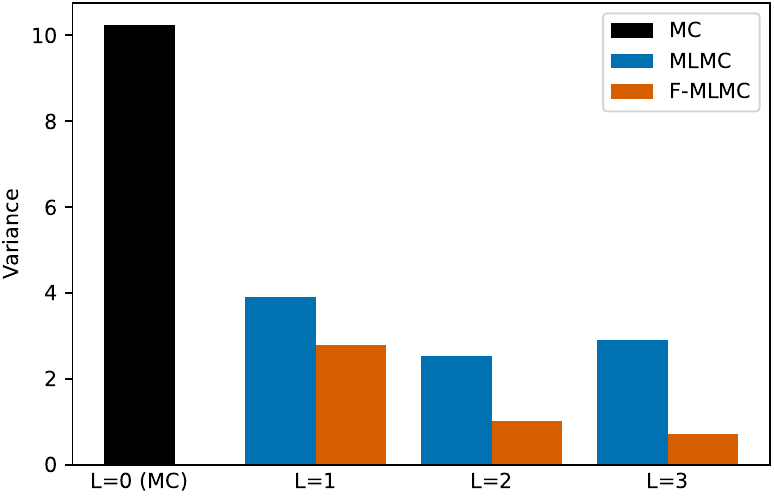}}\quad%
    \subfigure[$\bK$ with $\mu_\zeta=0.02$ and $D_\zeta=0.04$ (\cref{fig:Heterogeneous_D_002}).\label{fig:2D_total_var_002}]{\includegraphics[width=.45\linewidth]{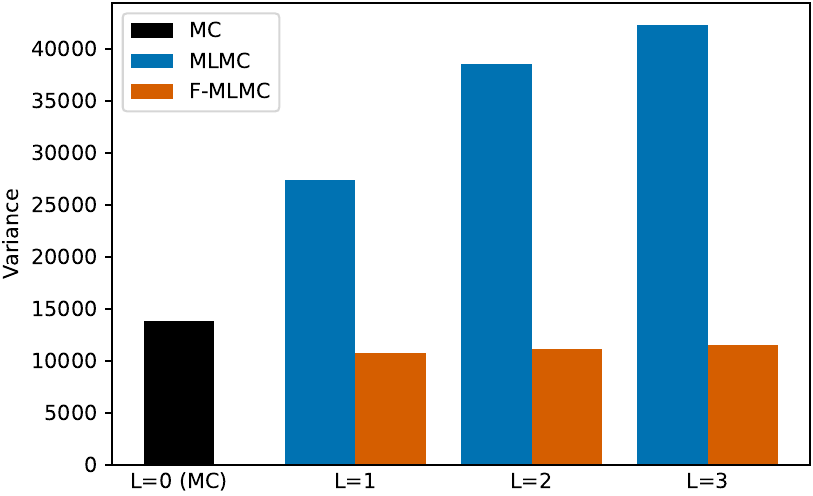}}
    \caption{Total variance of the MC estimator ($L=0$) and of different MLMC estimators and F-MLMC estimators with $L \in \{1,2,3\}$. The tensor field $\bK$ corresponds to $\mu_\zeta=0.12$ and $D_\zeta=0.2$ (left), and $\mu_\zeta=0.02$ and $D_\zeta=0.04$ (right).
    The variance is approximated from 500 estimators, each constructed with a computational budget $\mathcal{C} = 100 \mathcal{C}_L$.}
    \label{fig:2D_total_var}
\end{figure}

\section{Conclusion}\label{sec:conclusion}

In this paper, we focused on the estimation of the expectation of a discretized field using a multilevel, MLMC-like estimator.
The different fidelity levels considered are grids of different resolutions, which prompted us to introduce grid transfer operators in the estimator.
The resulting MLMC estimator can then be used to reduce the variance of the estimation compared to a crude MC estimator, as confirmed with an idealized 1D problem of estimating the discretized intrinsic variance field of a diffusion-based covariance operator.
However, projecting the variance of the MLMC estimator onto a spectral space revealed some discrepancy in the estimation of the different scales of the discretized field.
In our experiments, the MLMC estimator was still able to achieve a lower total variance by improving the estimation of the low-frequency (large-scale) components compared to the MC estimator, but at the expense of degrading the estimation in the higher-frequency (smaller-scale) components.

Inspired by multigrid methods, we proposed an improvement of the MLMC estimator by adding filtering operators, resulting in the F-MLMC estimator.
Filtering out the high-frequency components of a discretized field before restriction and after prolongation removes spurious features, thus yielding a better estimation of both the small- and large-scale components.
These improvements significantly impact the total variance of the F-MLMC estimators, which is also reduced compared to the MLMC estimators in our experiments.
In the specific case of linear, symmetric, circulant simulators, we quantified the effects of grid transfer and filtering operators on the total variance of (F-)MLMC estimators, which allowed us to improve our understanding of the influence of each ingredient.
The proposed F-MLMC estimators were applied to the problem of estimating the discretized intrinsic variance field of a 2D diffusion-based covariance operator with a non-uniform diffusivity field, which relies on non-linear simulators $\tilde{f}_\ell \colon \bx_\ell \mapsto (\bA_\ell \bx_\ell) \odot (\bA_\ell \bx_\ell)$.
The conclusions of these experiments were consistent with the theoretical results derived in the spectral analysis for linear, symmetric, circulant simulators.
Specifically, F-MLMC estimators do reduce the variance in both the low and high frequencies compared to their unfiltered counterparts, thus improving the total variance.
Even in experiments where MLMC estimators were not able to reduce (but actually deteriorated) the variance compared to a crude MC estimator, F-MLMC estimators still achieved lower total variance.

It should be noted that, for the particular problem of estimating discretized variance fields considered in this paper, the proposed MLMC and F-MLMC estimators are not guaranteed to be almost surely non-negative. 
Although negative estimates were not encountered in our experiments, this is nonetheless a serious limitation of the multilevel estimators, as already pointed out for the MLMC estimation of the variance of random variables (possibly with values in Hilbert spaces) \cite{bierig_convergence_2015}.
A similar issue exists for the estimation of covariance matrices, for which multilevel estimators are not guaranteed to be almost surely SPD \cite{hoel_multilevel_2016}, although advanced (but computationally expensive) approaches have been proposed to design multilevel estimators that are SPD by construction \cite{Maurais2023_MultiFidelityCovarianceEstimation, Maurais2025_MultifidelityCovarianceEstimation}.
These crucial issues still constitute an open research area.

Nonetheless, the investigations conducted in this paper and the proposed F-MLMC estimator expand the range of use of MLMC-like methods to discretized fields.
In our study, the use of second-order Shapiro filters demonstrated the benefits of applying pre- and post-smoothing at each level of the MLMC estimators.
A potential next step would be to investigate whether conditions can be derived for selecting the grid transfer and filtering operators, similar to the conditions stated in \cite{Hackbusch1985_MultiGridMethodsApplications,debreu_multigrid_2016} for multigrid methods.
Furthermore, extensions of the MLBLUE techniques \cite{Schaden2020_MultilevelBestLinear, Schaden2021_AsymptoticAnalysisMultilevel} to discretized fields may allow for the derivation of optimal spectral weights for each scale component of the discretized field to act as a post-prolongation filter \cite[sections {4.3--4.4}]{destouches_mlblue_2023}, possibly in combination with the spectral pre-restriction smoothing technique of \cite{Teckentrup2013_FurtherAnalysisMultilevel, Istratuca2025_SmoothedCirculantEmbedding}.
Another direction for future work could be to extend the filtering ideas developed in this paper to multilevel quasi Monte Carlo integration (see, e.g., \cite{Herrmann2019_MultilevelQuasiMonteCarlo,Croci2021_MultilevelQuasiMonte}).

As demonstrated numerically in \cref{sec:experiments}, the introduction of pre-restriction filtering operators significantly improves the multilevel estimator in terms of variance reduction. 
However, in our formulation, this comes at the expense of sampling the input fields on the finest level and resorting to finest-to-coarse restriction operators, thereby increasing the sampling cost on the coarse levels (see our discussion on the cost model in \cref{sec:experiments}).
It may thus be interesting to study the conditions under which the improvements in variance reduction outweigh the deterioration of the sampling cost.
Furthermore, it is sometimes possible to sample coupled pairs of input fields directly on their discretization level (as opposed to our formulation where the input fields are always sampled on the finest level).
Such alternative approaches would result in a more favourable cost model, since the use of restriction operators could then be avoided, but would require alternative smoothing/filtering strategies, e.g., the spectral strategies employed in \cite{Teckentrup2013_FurtherAnalysisMultilevel, Istratuca2025_SmoothedCirculantEmbedding}.
For our target problem of estimating the normalization coefficients of a diffusion-based covariance operator described in \cref{sec:problem_presentation}, future work could focus on alternative (F-)MLMC estimators inspired by \cite{croci_efficient_2018}, i.e., based on the joint distribution of the coupled input fields given by \cref{eq:mlmc_coupling}, thus avoiding the use of restriction operators.
In this case, spectral truncation is not readily available but could be performed, for example, after projecting the input fields onto a spectral basis.
A non-trivial challenge could then lie in maintaining the telescoping sum property of the resulting multilevel estimator to ensure its unbiasedness.
These ideas may be investigated in follow-up research projects.

Finally, an interesting research avenue would consist of exploiting further the hierarchical nature of MLMC techniques by combining them with multigrid iterative methods for solving the systems of linear equations involved in the normalization problem of \cref{sec:experiments}, using the same grid hierarchy, possibly along the lines of \cite{Osborn2017_MultilevelHierarchicalSampling}, \cite{kumar_multigrid_2017,Kumar2019_LocalFourierAnalysis,Robbe2019_RecyclingSamplesMultigrid,Robbe2021_EnhancedMultiindexMonte}, or \cite{Goodman1989_MultigridMonteCarlo,Kazashi2024_MultigridMonteCarlo}.


\section*{Acknowledgments}
This project has received financial support from the CNRS (Centre National de la Recherche Scientifique) through the 80|Prime program and the French national program LEFE (Les Enveloppes Fluides et l'Environnement).

\appendix
\section{Orthogonality of the Hartley matrix}%
\label{app:ortho_hartley}%
We focus here on a Hartley matrix $\bH$ of arbitrary size $n$,
\begin{equation}\label{eq:hartley_matrix_generic}
    (\bH)_{j,k} 
    \bydef 
    \cos\alpha_{jk} + \sin\alpha_{jk}, 
    \quad
    \alpha_{jk} \bydef \frac{(2j+1)k\pi}{n},
    \quad \forall j,k = 0,\dots,n-1.
\end{equation}
We have, for $i,j=0,\ldots,n-1$,
\begin{align}
    (\bH \bH^{\transp})_{i,j}
    & = \sum_{k=0}^{n-1} (\cos\alpha_{ik} + \sin\alpha_{ik}) (\cos\alpha_{jk} + \sin\alpha_{jk}) 
      = \sum_{k=0}^{n-1} \big[ \cos(\alpha_{ik}-\alpha_{jk}) + \sin(\alpha_{ik}+\alpha_{jk}) \big] \\
    & = \sum_{k=0}^{n-1} \cos\frac{2(i-j)k\pi}{n} + \sum_{k=0}^{n-1} \sin\frac{2(i+j+1)k\pi}{n}.
\end{align}
From~\cite[1.342 (1\&2)]{Gradshteyn2015_TableIntegralsSeries}, we deduce, for $i,j=0,\ldots,n-1$,
\begin{align}
    \sum_{k=0}^{n-1} \cos\frac{2(i-j)k\pi}{n} & = n \delta_{ij},
    & \sum_{k=0}^{n-1} \sin\frac{2(i+j+1)k\pi}{n} & = 0,
\end{align}
where $\delta_{ij}$ denotes the Kronecker delta, thus proving that $\bH \bH^{\transp} = n\bI_n$.
Similarly,
\begin{equation}
    (\bH^{\transp} \bH)_{i,j}
    = 
    \sum_{k=0}^{n-1} \cos\frac{(2k+1)(i-j)\pi}{n} + \sum_{k=0}^{n-1} \sin\frac{(2k+1)(i+j)\pi}{n},
\end{equation}
and $\bH^{\transp} \bH = n\bI_n$ follows from~\cite[1.342 (3\&4)]{Gradshteyn2015_TableIntegralsSeries}.

\section{Prolongation of Hartley vectors}%
\label{app:proof_prolong_hartley}%
For $j,k=0,\ldots,n_0-1$, 
\begin{equation}
    (\bP \bh^0_k)_{2j} 
    = (\bP \bh^0_k)_{2j+1} 
    = (\bh^0_k)_j
    = \cos\frac{(4j+2)k\pi}{n_1} + \sin\frac{(4j+2)k\pi}{n_1}.
\end{equation}
Upon writing $(4j+2)k\pi = (4j+1)k\pi + k\pi$ and applying elementary trigonometric identitites,
\begin{equation}
    (\bP \bh^0_k)_{2j}
    = c_k (\bh^1_k)_{2j}
    + 
    \sin\frac{k\pi}{n_1} \left(
        \cos\frac{(4j+1)k\pi}{n_1} - \sin\frac{(4j+1)k\pi}{n_1}
    \right).
\end{equation}
Noticing that
\begin{align}
    \sin\frac{k\pi}{n_1}
    & = \sin\mleft( \frac{(n_0+k)\pi}{2n_0} - \frac{\pi}{2} \mright)
      = -c_{n_0+k}, \\
    \cos\frac{(4j+1)k\pi}{n_1} 
    & = \cos\mleft( \frac{(4j+1)(n_0+k)\pi}{n_1} - \frac{\pi}{2} \mright)
      = \sin\frac{(4j+1)(n_0+k)\pi}{n_1}, \\
    \sin\frac{(4j+1)k\pi}{n_1} 
    & = \sin\mleft( \frac{(4j+1)(n_0+k)\pi}{n_1} - \frac{\pi}{2} \mright)
      = - \cos\frac{(4j+1)(n_0+k)\pi}{n_1},
\end{align}
we conclude that $(\bP \bh^0_k)_{2j} = c_k (\bh^1_k)_{2j} - c_{n_0+k} (\bh^1_{n_0+k})_{2j}$.
Then, upon writing $(4j+2)k\pi = (4j+3)k\pi - k\pi$, a similar derivation leads to $(\bP \bh^0_k)_{2j+1} = c_k (\bh^1_k)_{2j+1} - c_{n_0+k} (\bh^1_{n_0+k})_{2j+1}$, thus proving \cref{eq:prolong_hartley}.

\section{Restriction of Hartley vectors}%
\label{app:proof_restrict_hartley}%
For $j=0,\ldots,n_0-1$ and $k=0,\ldots, 2n_0-1$,
\begin{align}
    (\bR \bh^1_k)_j
    &= (\bh^1_k)_{2j} + (\bh^1_k)_{2j+1} \\
    &= \cos\frac{(4j+1)k\pi}{n_1} + \cos\frac{(4j+3)k\pi}{n_1}
        + 
        \sin\frac{(4j+1)k\pi}{n_1} + \sin\frac{(4j+3)k\pi}{n_1}\\
    & = 2 \cos\frac{(8j+4)k\pi}{2n_1} \cos\frac{2k\pi}{2n_1}
        + 
        2 \sin\frac{(8j+4)k\pi}{2n_1} \cos\frac{2k\pi}{2n_1}\\
    & = 2 c_k \left[
        \cos\frac{(2j+1)k\pi}{n_0} + \sin\frac{(2j+1)k\pi}{n_0}
    \right].
\end{align}
Hence, for $j,k=0,\ldots,n_0-1$, it follows immediately that $\bR \bh^1_k = 2 c_k \bh^0_k$. 
Furthermore,
\begin{equation}
    (\bR \bh^1_{n_0+k})_j
    =
    2 c_{n_0+k} \left[
        \cos\mleft( \frac{(2j+1)k\pi}{n_0} + \pi \mright)
        +
        \sin\mleft( \frac{(2j+1)k\pi}{n_0} + \pi \mright)
    \right]
    = -2 c_{n_0+k} \bh^0_k,
\end{equation}
thus proving \cref{eq:restrict_hartley}.

\section{Symmetric circulant matrices are diagonalizable in the Hartley basis}%
\label{app:symcirc_diag_hartley}%
In this appendix, we prove \cref{thm:cymcirc_diag_hartley_cell} below, which states that symmetric, circulant matrices can be diagonalized in the cell-centered Hartley basis $\bH$ defined by \cref{eq:hartley_matrix_generic}.
To do so, we start by recalling or proving results on the node-centered Fourier basis $\check{\bF} \in \mathbb{C}^{n \times n}$ and Hartley bases $\check{\bH}^\pm \in \R^{n \times n}$ defined by 
\begin{align}\label{eq:fourier_hartley_matrix_generic_node}
    \check{\bF} &\bydef \check{\bH}_c + i \check{\bH}_s, 
    &
    \check{\bH}^\pm &\bydef \check{\bH}_c \pm \check{\bH}_s, 
    &
    (\check{\bH}_c)_{j,k}
    & \bydef
    \frac{1}{\sqrt{n}} \cos\frac{2jk\pi}{n},
    &
    (\check{\bH}_s)_{j,k}
    &\bydef
    \frac{1}{\sqrt{n}} \sin\frac{2jk\pi}{n},
\end{align}
where $i\in\mathbb{C}$ denotes the unit imaginary number such that $i^2=-1$.
It is clear that $\check{\bH}_c$ and $\check{\bH}_s$ are real, symmetric matrices, and hence so are $\check{\bH}^\pm$,
while $\check{\bF}$ is a complex, symmetric (but not Hermitian) matrix.\
Furthermore, $\check{\bF}$ is unitary (see, e.g., \cite{Davis2012_CirculantMatricesSecond}), i.e., $\check{\bF}^*\check{\bF} = \check{\bF}\check{\bF}^*=\bI_n$, where $\check{\bF}^* = \check{\bH}_c - i \check{\bH}_s$ is the Hermitian transpose of $\check{\bF}$.

\begin{lemma}\label{lem:prop_hartley}
    $\check{\bH}_c$ and $\check{\bH}_s$ are such that
    \begin{align}
        \check{\bH}_c^2 + \check{\bH}_s^2  & = \bI_n,
        & \check{\bH}_c\check{\bH}_s & = \check{\bH}_s\check{\bH}_c = \bzero_n,
        & \check{\bH}_c \bone_n & = \sqrt{n} \be_1,
        & \check{\bH}_s \bone_n & = \bzero_n,
    \end{align}
    where $\bzero_n \bydef (0,\ldots,0)^{\transp} \in\R^n$, $\bone_n \bydef (1,\ldots,1)^{\transp} \in\R^n$,
    and
    $\be_1$ denotes the first column of $\bI_n$.
\end{lemma}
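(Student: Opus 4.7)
The plan is to relate the node-centered Hartley matrices $\check{\bH}_c$ and $\check{\bH}_s$ to the complex Fourier matrix $\check{\bF}$, and exploit the unitarity of $\check{\bF}$ stated in the preceding paragraph. By construction,
\begin{equation*}
\check{\bH}_c = \tfrac{1}{2}(\check{\bF} + \check{\bF}^*),
\qquad
\check{\bH}_s = \tfrac{1}{2i}(\check{\bF} - \check{\bF}^*),
\end{equation*}
so the four products $\check{\bH}_c^2$, $\check{\bH}_s^2$, $\check{\bH}_c\check{\bH}_s$ and $\check{\bH}_s\check{\bH}_c$ expand as linear combinations of $\check{\bF}^2$, $(\check{\bF}^*)^2$, $\check{\bF}\check{\bF}^*$ and $\check{\bF}^*\check{\bF}$. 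Unitarity immediately replaces the last two by $\bI_n$.

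The key auxiliary fact that I would establish next is that $\check{\bF}^2 = (\check{\bF}^*)^2$. A direct calculation of $(\check{\bF}^2)_{jl} = \frac{1}{n} \sum_{k=0}^{n-1} e^{2i\pi(j+l)k/n}$ using the standard geometric sum identity shows that $\check{\bF}^2$ is the real reversal matrix whose $(j,l)$ entry is $1$ when $j+l\equiv 0 \pmod n$ and $0$ otherwise; being real, it coincides with its conjugate $(\check{\bF}^*)^2$. Substituting this back into the expanded products yields
\begin{equation*}
\check{\bH}_c^2+\check{\bH}_s^2 = \tfrac{1}{4}\bigl[(\check{\bF}+\check{\bF}^*)^2 - (\check{\bF}-\check{\bF}^*)^2\bigr] = \check{\bF}\check{\bF}^* = \bI_n,
\end{equation*}
and
\begin{equation*}
\check{\bH}_c\check{\bH}_s = \tfrac{1}{4i}\bigl(\check{\bF}^2 - (\check{\bF}^*)^2\bigr) = \bzero_n,
\qquad
\check{\bH}_s\check{\bH}_c = \tfrac{1}{4i}\bigl(\check{\bF}^2 - (\check{\bF}^*)^2\bigr) = \bzero_n.
\end{equation*}

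For the last two identities, I would argue directly from the definition \eqref{eq:fourier_hartley_matrix_generic_node}: the $j$-th entries of $\check{\bH}_c\bone_n$ and $\check{\bH}_s\bone_n$ are $\frac{1}{\sqrt{n}}\sum_{k=0}^{n-1}\cos(2jk\pi/n)$ and $\frac{1}{\sqrt{n}}\sum_{k=0}^{n-1}\sin(2jk\pi/n)$, respectively. These are exactly the trigonometric sums already invoked in \cref{app:ortho_hartley} (from \cite[1.342]{Gradshteyn2015_TableIntegralsSeries}), which equal $n\delta_{j,0}$ and $0$ for $j=0,\ldots,n-1$, yielding $\check{\bH}_c\bone_n=\sqrt{n}\,\be_1$ and $\check{\bH}_s\bone_n=\bzero_n$.

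I do not anticipate a real obstacle here: the only nontrivial observation is that $\check{\bF}^2$ is real, which decouples the expansions of $\check{\bH}_c^2\pm\check{\bH}_s^2$ and forces the mixed products to vanish. Alternatively, if one wished to avoid complex notation entirely, the same four matrix identities can be obtained by computing the $(j,l)$ entries via the product-to-sum formulas $2\cos\alpha\cos\beta=\cos(\alpha-\beta)+\cos(\alpha+\beta)$, $2\sin\alpha\sin\beta=\cos(\alpha-\beta)-\cos(\alpha+\beta)$ and $2\cos\alpha\sin\beta=\sin(\alpha+\beta)-\sin(\alpha-\beta)$, and then reducing each term using the same trigonometric sums from \cref{app:ortho_hartley}.
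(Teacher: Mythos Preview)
Your argument is correct. The paper's own proof is essentially a pair of citations: it defers the first two matrix identities to \cite[Lemma~1]{bini_matrix_1993} and the last two to \cite[1.342~(1\&2)]{Gradshteyn2015_TableIntegralsSeries}. You instead give a self-contained derivation of the first two identities by writing $\check{\bH}_c=\tfrac{1}{2}(\check{\bF}+\check{\bF}^*)$ and $\check{\bH}_s=\tfrac{1}{2i}(\check{\bF}-\check{\bF}^*)$, invoking unitarity, and using the key observation that $\check{\bF}^2$ is the real reversal matrix so that $\check{\bF}^2=(\check{\bF}^*)^2$; this forces the mixed products to vanish. For the last two identities your argument coincides with the paper's (the same trigonometric sums already used in \cref{app:ortho_hartley}). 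Your route is more explicit and avoids the external reference; the paper's route is shorter but opaque. One minor cosmetic point: in your displayed identity $\check{\bH}_c^2+\check{\bH}_s^2=\check{\bF}\check{\bF}^*$, the intermediate expression is really $\tfrac{1}{2}(\check{\bF}\check{\bF}^*+\check{\bF}^*\check{\bF})$, which equals $\bI_n$ by unitarity either way, so the conclusion is unaffected.
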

\begin{proof}
    The first two identities follow from \cite[Lemma~1]{bini_matrix_1993}, while the last two identities follow from~\cite[1.342 (1\&2)]{Gradshteyn2015_TableIntegralsSeries}. 
\end{proof}

\begin{corollary}\label{coro:prop_hartley}
    $\check{\bH}^\pm \bone_n = \sqrt{n} \be_1$.
\end{corollary}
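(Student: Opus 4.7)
The corollary is an immediate consequence of \cref{lem:prop_hartley}, so the plan is short. I would simply expand the definition of $\check{\bH}^\pm$ given in \cref{eq:fourier_hartley_matrix_generic_node}, apply linearity, and invoke the two relevant identities from \cref{lem:prop_hartley}.

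More concretely, the plan is to write
\begin{equation*}
    \check{\bH}^\pm \bone_n
    = (\check{\bH}_c \pm \check{\bH}_s)\bone_n
    = \check{\bH}_c \bone_n \pm \check{\bH}_s \bone_n,
\end{equation*}
and then substitute $\check{\bH}_c \bone_n = \sqrt{n}\be_1$ and $\check{\bH}_s \bone_n = \bzero_n$ to conclude $\check{\bH}^\pm \bone_n = \sqrt{n}\be_1$. Both signs collapse to the same vector because the sine-part contribution vanishes, which is ultimately a restatement of the fact that the nonzero-frequency sines sum to zero over a full period of samples.

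There is really no obstacle here: the two needed identities are already assumed (they were proved in \cref{lem:prop_hartley} using the closed-form trigonometric sums from Gradshteyn--Ryzhik), so the corollary is essentially a one-line consequence. The only reason to state it separately is that the matrices $\check{\bH}^\pm$ are the two node-centered real Hartley variants that will be needed in the subsequent diagonalization argument, and it is convenient to isolate how they act on $\bone_n$ before moving on to proving \cref{thm:cymcirc_diag_hartley_cell}.
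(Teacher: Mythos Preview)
Your proposal is correct and matches the paper's own argument essentially line for line: expand $\check{\bH}^\pm = \check{\bH}_c \pm \check{\bH}_s$, apply linearity, and use $\check{\bH}_c \bone_n = \sqrt{n}\be_1$ and $\check{\bH}_s \bone_n = \bzero_n$ from \cref{lem:prop_hartley}. There is nothing to add.
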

\begin{corollary}\label{coro:hartley_ortho_mixed}
    $\check{\bH}^\pm$ are orthogonal, i.e., $(\check{\bH}^\pm)^2 = \bI_n$.
    Furthermore, $\check{\bH}^+\check{\bH}^- = \check{\bH}^-\check{\bH}^+ = \check{\bF}^2$.
\end{corollary}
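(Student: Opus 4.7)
The plan is to expand the squares and products directly, using the two algebraic identities from \Cref{lem:prop_hartley}, namely $\check{\bH}_c^2 + \check{\bH}_s^2 = \bI_n$ and $\check{\bH}_c\check{\bH}_s = \check{\bH}_s\check{\bH}_c = \bzero_n$. Since $\check{\bH}^\pm$ is built out of $\check{\bH}_c$ and $\check{\bH}_s$ as $\check{\bH}^\pm = \check{\bH}_c \pm \check{\bH}_s$, everything reduces to polynomial manipulations in these two matrices.

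First I would compute $(\check{\bH}^\pm)^2 = (\check{\bH}_c \pm \check{\bH}_s)^2 = \check{\bH}_c^2 \pm \check{\bH}_c\check{\bH}_s \pm \check{\bH}_s\check{\bH}_c + \check{\bH}_s^2$. The cross terms vanish by the second identity of \Cref{lem:prop_hartley}, and the remaining sum equals $\bI_n$ by the first identity, proving orthogonality. Since $\check{\bH}^\pm$ are real and symmetric (being sums of the symmetric $\check{\bH}_c$ and $\check{\bH}_s$), the identity $(\check{\bH}^\pm)^2 = \bI_n$ is the standard orthogonality statement.

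Next I would compute both mixed products: $\check{\bH}^+\check{\bH}^- = (\check{\bH}_c+\check{\bH}_s)(\check{\bH}_c-\check{\bH}_s) = \check{\bH}_c^2 - \check{\bH}_c\check{\bH}_s + \check{\bH}_s\check{\bH}_c - \check{\bH}_s^2 = \check{\bH}_c^2 - \check{\bH}_s^2$, where again the cross terms vanish; an identical manipulation gives $\check{\bH}^-\check{\bH}^+ = \check{\bH}_c^2 - \check{\bH}_s^2$. In parallel, expanding $\check{\bF}^2 = (\check{\bH}_c + i\check{\bH}_s)^2 = \check{\bH}_c^2 + i(\check{\bH}_c\check{\bH}_s + \check{\bH}_s\check{\bH}_c) + i^2\check{\bH}_s^2$, the $i$-term vanishes by the commutation/nullity identity and $i^2 = -1$ gives $\check{\bF}^2 = \check{\bH}_c^2 - \check{\bH}_s^2$. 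Matching the two expressions closes the proof.

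There is essentially no obstacle here: the only thing one has to be careful about is invoking $\check{\bH}_c\check{\bH}_s = \check{\bH}_s\check{\bH}_c = \bzero_n$ twice (once to drop cross terms in the squares and products of $\check{\bH}^\pm$, and once to drop the imaginary cross term in $\check{\bF}^2$), which is exactly the content already supplied by \Cref{lem:prop_hartley}. No additional trigonometric identities or properties of the underlying basis are needed.
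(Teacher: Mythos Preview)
Your proposal is correct and follows essentially the same approach as the paper: both expand $(\check{\bH}^\pm)^2$, $\check{\bH}^\pm\check{\bH}^\mp$, and $\check{\bF}^2$ directly in terms of $\check{\bH}_c$ and $\check{\bH}_s$, and then invoke the identities $\check{\bH}_c^2+\check{\bH}_s^2=\bI_n$ and $\check{\bH}_c\check{\bH}_s=\check{\bH}_s\check{\bH}_c=\bzero_n$ from \Cref{lem:prop_hartley} to simplify.
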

\begin{proof}
    By the definitions \cref{eq:fourier_hartley_matrix_generic_node} and \cref{lem:prop_hartley}, we have
    \begin{align}
        \check{\bH}^\pm \bone_n
        & = \check{\bH}_c \bone_n \pm \check{\bH}_s \bone_n 
        = \check{\bH}_c \bone_n = \sqrt{n} \be_1,\\
        (\check{\bH}^\pm)^2
        & = (\check{\bH}_c^2 + \check{\bH}_s^2) 
        \pm (\check{\bH}_c\check{\bH}_s + \check{\bH}_s\check{\bH}_c)
        = \check{\bH}_c^2 + \check{\bH}_s^2 = \bI_n,\\
        \check{\bF}^2 
        & = \check{\bH}_c^2 - \check{\bH}_s^2 
        + i (\check{\bH}_c\check{\bH}_s + \check{\bH}_s \check{\bH}_c) = \check{\bH}_c^2 - \check{\bH}_s^2,\\
        \check{\bH}^-\check{\bH}^+ 
        & = \check{\bH}_c^2 - \check{\bH}_s^2 
        + (\check{\bH}_c\check{\bH}_s - \check{\bH}_s \check{\bH}_c) = \check{\bF}^2 = \check{\bH}^+\check{\bH}^-.
    \end{align}
\end{proof}

\begin{lemma}\label{lem:prop_hartley_circ}
    Let $\bA = \Circ(\ba) \in \R^{n \times n}$ be a symmetric, circulant matrix whose first column is $\ba = (a_k)_{k=0}^{n-1} \in \R^n$, with $a_{n-i}= a_i$, for $i=1,\ldots,n-1$.
    Then 
    $\check{\bH}_c \bA \check{\bH}_s + \check{\bH}_s \bA \check{\bH}_c
    =
    \check{\bH}_c \bA \check{\bH}_s - \check{\bH}_s \bA \check{\bH}_c
    = \bzero_n$.
\end{lemma}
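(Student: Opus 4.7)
The strategy is to diagonalize $\bA$ in the (node-centered) Fourier basis and then extract the claimed identities as the imaginary parts of two expressions that we can show are real.

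Since $\bA = \Circ(\ba)$ is circulant, it is diagonalized by the Fourier matrix: $\bA = \check{\bF} \bLambda \check{\bF}^*$ for some diagonal $\bLambda$. The symmetry condition $a_{n-i}=a_i$ (for $i=1,\dots,n-1$), together with $\bA$ being real, implies that the eigenvalues $(\lambda_k)_{k=0}^{n-1}$ are real; this is an elementary check using $\lambda_k = \sum_{j=0}^{n-1} a_j \omega^{jk}$ with $\omega = e^{2\pi i/n}$ and the substitution $j\mapsto n-j$. Hence $\bLambda \in \R^{n\times n}$.

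Next I would compute two quadratic forms in $\check{\bF}$. First, using $\check{\bF}^*\check{\bF} = \bI_n$,
\begin{equation*}
    \check{\bF}^* \bA \check{\bF} = \bLambda,
\end{equation*}
and expanding $\check{\bF} = \check{\bH}_c + i\check{\bH}_s$, $\check{\bF}^* = \check{\bH}_c - i\check{\bH}_s$ (both $\check{\bH}_c$, $\check{\bH}_s$ being real symmetric) gives
\begin{equation*}
    \check{\bF}^* \bA \check{\bF}
    = \bigl(\check{\bH}_c \bA \check{\bH}_c + \check{\bH}_s \bA \check{\bH}_s\bigr)
    + i \bigl(\check{\bH}_c \bA \check{\bH}_s - \check{\bH}_s \bA \check{\bH}_c\bigr).
\end{equation*}
Since the left-hand side is real (equal to $\bLambda$), taking the imaginary part yields the second identity $\check{\bH}_c \bA \check{\bH}_s - \check{\bH}_s \bA \check{\bH}_c = \bzero_n$.

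Second, I would compute
\begin{equation*}
    \check{\bF} \bA \check{\bF} = \check{\bF}\,\check{\bF}\bLambda\check{\bF}^*\check{\bF} = \check{\bF}^2 \bLambda,
\end{equation*}
and invoke \Cref{coro:hartley_ortho_mixed}, which gives $\check{\bF}^2 = \check{\bH}_c^2 - \check{\bH}_s^2 \in \R^{n\times n}$. Thus $\check{\bF}^2\bLambda$ is real. Expanding $\check{\bF} \bA \check{\bF}$ as above gives
\begin{equation*}
    \check{\bF} \bA \check{\bF}
    = \bigl(\check{\bH}_c \bA \check{\bH}_c - \check{\bH}_s \bA \check{\bH}_s\bigr)
    + i \bigl(\check{\bH}_c \bA \check{\bH}_s + \check{\bH}_s \bA \check{\bH}_c\bigr),
\end{equation*}
so taking the imaginary part yields the first identity $\check{\bH}_c \bA \check{\bH}_s + \check{\bH}_s \bA \check{\bH}_c = \bzero_n$.

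The only nonroutine step is recognizing that the key to the first identity is that $\check{\bF}^2$ is real (already supplied by \Cref{coro:hartley_ortho_mixed}); the second identity is the more standard statement that $\check{\bF}^*\bA\check{\bF}$ is real because $\bA$ is real symmetric circulant. Both rely on the reality of the eigenvalues $\bLambda$, which is the only place the hypothesis $a_{n-i}=a_i$ enters.
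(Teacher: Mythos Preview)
Your argument is correct. The paper itself does not give a proof; it simply cites the intermediate steps of \cite[Theorem~1]{bini_matrix_1993}. Your approach is therefore genuinely different and, in fact, more self-contained: you exploit the Fourier diagonalization $\bA = \check{\bF}\bLambda\check{\bF}^*$ with real $\bLambda$ (guaranteed since $\bA$ is real symmetric), then read off the two identities as the imaginary parts of $\check{\bF}^*\bA\check{\bF} = \bLambda$ and $\check{\bF}\bA\check{\bF} = \check{\bF}^2\bLambda$, both of which are real. The only external ingredient you need is that $\check{\bF}^2$ is real, which the paper has already established in \Cref{coro:hartley_ortho_mixed} (before the present lemma, so there is no circularity). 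Compared to deferring to an outside reference, your route is short, transparent, and makes clear exactly where the symmetry hypothesis $a_{n-i}=a_i$ is used (reality of $\bLambda$). One minor simplification: you do not even need the explicit eigenvalue formula to argue that $\bLambda$ is real---it follows directly from $\bA$ being real symmetric.
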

\begin{proof}
    Proven in the intermediary steps of the proof of \cite[Theorem~1]{bini_matrix_1993}.
\end{proof}

\begin{theorem}\label{thm:cymcirc_diag_hartley_node}
    Let $\bA = \Circ(\ba) \in \R^{n \times n}$ be a symmetric, circulant matrix whose first column is $\ba = (a_k)_{k=0}^{n-1} \in \R^n$, with $a_{n-i}= a_i$, for $i=1,\ldots,n-1$.
    Then $\check{\bH}^+ \bA \check{\bH}^+ = \check{\bH}^- \bA \check{\bH}^- = \bLambda$, where
    $\bLambda = \sqrt{n} \Diag(\check{\bH}^+ \ba) = \sqrt{n} \Diag(\check{\bH}^- \ba) = \sqrt{n} \Diag(\check{\bH}_c \ba)$.
\end{theorem}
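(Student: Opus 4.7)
The plan is to reduce the statement to the well-known diagonalization of circulant matrices by the Fourier matrix $\check{\bF}$, and then to exploit the palindromic symmetry $a_{n-i}=a_i$ to identify the diagonal entries with $\sqrt{n}\,\check{\bH}_c\ba$.

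First I would invoke (or briefly derive by direct computation of $(\bA\check{\bF})_{jk}$) the standard fact that $\check{\bF}\bA\check{\bF}^* = \sqrt{n}\,\Diag(\check{\bF}^*\ba)$ for any circulant $\bA = \Circ(\ba)$. Writing $\check{\bF} = \check{\bH}_c + i\check{\bH}_s$ and $\check{\bF}^* = \check{\bH}_c - i\check{\bH}_s$ and expanding, the real part of $\check{\bF}\bA\check{\bF}^*$ equals $\check{\bH}_c\bA\check{\bH}_c + \check{\bH}_s\bA\check{\bH}_s$ while the imaginary part equals $\check{\bH}_s\bA\check{\bH}_c - \check{\bH}_c\bA\check{\bH}_s$. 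By Lemma~\ref{lem:prop_hartley_circ}, both the sum and the difference $\check{\bH}_c\bA\check{\bH}_s \pm \check{\bH}_s\bA\check{\bH}_c$ vanish, so each cross term is individually zero, which gives $\check{\bF}\bA\check{\bF}^* = \check{\bH}_c\bA\check{\bH}_c + \check{\bH}_s\bA\check{\bH}_s$.

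Next I would expand $\check{\bH}^\pm\bA\check{\bH}^\pm = (\check{\bH}_c\pm\check{\bH}_s)\bA(\check{\bH}_c\pm\check{\bH}_s)$ and, again using the vanishing of both cross terms, obtain that $\check{\bH}^+\bA\check{\bH}^+$ and $\check{\bH}^-\bA\check{\bH}^-$ both reduce to $\check{\bH}_c\bA\check{\bH}_c + \check{\bH}_s\bA\check{\bH}_s$, which in turn equals $\check{\bF}\bA\check{\bF}^* = \sqrt{n}\,\Diag(\check{\bF}^*\ba)$. In particular both products are real and diagonal, establishing the first equality of the theorem.

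The final step is to show that $\check{\bH}_s\ba = \bzero_n$, which collapses the three candidate expressions for $\bLambda$ into one. In the sum $(\check{\bH}_s\ba)_j = \tfrac{1}{\sqrt{n}}\sum_{k=0}^{n-1}\sin(2jk\pi/n)\,a_k$, the $k=0$ contribution is trivially zero, the $k=n/2$ contribution (when $n$ is even) vanishes since $\sin(j\pi)=0$, and every remaining index $k$ pairs with $n-k$ via $\sin(2j(n-k)\pi/n)=-\sin(2jk\pi/n)$ together with $a_{n-k}=a_k$, so the paired contributions cancel. Consequently $\check{\bF}^*\ba = \check{\bH}_c\ba = \check{\bH}^+\ba = \check{\bH}^-\ba$, which closes the chain $\bLambda = \sqrt{n}\,\Diag(\check{\bH}^+\ba) = \sqrt{n}\,\Diag(\check{\bH}^-\ba) = \sqrt{n}\,\Diag(\check{\bH}_c\ba)$. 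The most delicate point is extracting from Lemma~\ref{lem:prop_hartley_circ} that each cross product $\check{\bH}_c\bA\check{\bH}_s$ and $\check{\bH}_s\bA\check{\bH}_c$ vanishes on its own (immediate from the simultaneous vanishing of their sum and difference) and handling the even-$n$ case in the pairing argument; everything else is routine bookkeeping.
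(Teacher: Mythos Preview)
Your argument is correct, but it takes a somewhat different route from the paper's. The paper obtains the $\check{\bH}^+$ case (together with the identity $\bLambda=\check{\bH}_c\bA\check{\bH}_c+\check{\bH}_s\bA\check{\bH}_s$) by citing \cite[Theorem~1]{bini_matrix_1993}, and then derives the $\check{\bH}^-$ case from \cref{lem:prop_hartley_circ}; you instead start from the classical Fourier diagonalization $\check{\bF}\bA\check{\bF}^*=\sqrt{n}\,\Diag(\check{\bF}^*\ba)$ and use \cref{lem:prop_hartley_circ} to get both signs at once. For the identification of the diagonal, the paper applies $\bLambda$ to $\bone_n$ and invokes $\check{\bH}^-\bone_n=\sqrt{n}\be_1$ and $\check{\bH}_s\bone_n=\bzero_n$ from \cref{lem:prop_hartley} and \cref{coro:prop_hartley}, whereas you prove $\check{\bH}_s\ba=\bzero_n$ directly by the palindromic pairing $k\leftrightarrow n-k$. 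Your approach is more self-contained (no external citation needed) and handles both $\check{\bH}^\pm$ symmetrically; the paper's extraction of $\diag(\bLambda)$ via $\bone_n$ is slightly slicker in that it avoids the even/odd case split, but both routes are short and valid.
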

\begin{proof}
    The fact that $\check{\bH}^+ \bA \check{\bH}^+ = \sqrt{n} \Diag(\check{\bH}^+ \ba) = \bLambda = \check{\bH}_c \bA \check{\bH}_c + \check{\bH}_s \bA \check{\bH}_s$ follows from \cite[Theorem~1]{bini_matrix_1993}, since symmetric, circulant matrices belong to the larger class of matrices considered in \cite{bini_matrix_1993}.
    Then, from the definition of $\check{\bH}^-$ and \cref{lem:prop_hartley_circ}, we have 
    \begin{equation}
        \check{\bH}^- \bA \check{\bH}^-
        = (\check{\bH}_c \bA \check{\bH}_c + \check{\bH}_s \bA \check{\bH}_s) 
        - (\check{\bH}_c \bA \check{\bH}_s + \check{\bH}_s \bA \check{\bH}_c)
        = \check{\bH}_c \bA \check{\bH}_c + \check{\bH}_s \bA \check{\bH}_s 
        = \bLambda.
    \end{equation}
    Finally, from \cref{lem:prop_hartley} and \cref{coro:prop_hartley},
    \begin{equation}
        \diag(\bLambda) 
        = \bLambda \bone_n 
        = \check{\bH}^- \bA \check{\bH}^- \bone_n 
        = \check{\bH}_c \bA \check{\bH}_c \bone_n + \check{\bH}_s \bA \check{\bH}_s \bone_n
        = \sqrt{n} \check{\bH}^- \bA \be_1
        = \sqrt{n} \check{\bH}_c \bA \be_1,
    \end{equation}
    which concludes the proof, since $\bA \be_1 = \ba$.
\end{proof}


\begin{corollary}\label{coro:mixed_hartley_pm}
    Let $\bA = \Circ(\ba) \in \R^{n \times n}$ be a symmetric, circulant matrix and let $\bLambda$ be defined as in \cref{thm:cymcirc_diag_hartley_node}.
    Then $\check{\bH}^+\bA\check{\bH}^- = \check{\bH}^-\bA\check{\bH}^+ = \bLambda\check{\bF}^2 = \check{\bF}^2\bLambda$.
\end{corollary}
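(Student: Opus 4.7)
The plan is to derive both expressions for $\check{\bH}^+\bA\check{\bH}^-$ (and analogously for $\check{\bH}^-\bA\check{\bH}^+$) by inserting the identity matrix written in two different ways, using the orthogonality relations $(\check{\bH}^\pm)^2 = \bI_n$ established in Corollary~\ref{coro:hartley_ortho_mixed}. The diagonalization identity from Theorem~\ref{thm:cymcirc_diag_hartley_node} ($\check{\bH}^\pm \bA \check{\bH}^\pm = \bLambda$) will then collapse the inner factor to $\bLambda$, while the remaining product $\check{\bH}^+\check{\bH}^- = \check{\bH}^-\check{\bH}^+ = \check{\bF}^2$ (again from Corollary~\ref{coro:hartley_ortho_mixed}) accounts for the $\check{\bF}^2$ factor. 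The fact that $\bLambda \check{\bF}^2 = \check{\bF}^2 \bLambda$ would then emerge for free as a consequence of the two different insertions yielding the same product.

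Concretely, I would first write
\begin{equation*}
\check{\bH}^+\bA\check{\bH}^-
= \check{\bH}^+\bA\,(\check{\bH}^+\check{\bH}^+)\,\check{\bH}^-
= (\check{\bH}^+\bA\check{\bH}^+)(\check{\bH}^+\check{\bH}^-)
= \bLambda\check{\bF}^2,
\end{equation*}
using $(\check{\bH}^+)^2 = \bI_n$ in the first equality, Theorem~\ref{thm:cymcirc_diag_hartley_node} and the definition of $\check{\bF}^2$ in the last. Then I would perform the symmetric insertion on the other side:
\begin{equation*}
\check{\bH}^+\bA\check{\bH}^-
= \check{\bH}^+(\check{\bH}^-\check{\bH}^-)\bA\check{\bH}^-
= (\check{\bH}^+\check{\bH}^-)(\check{\bH}^-\bA\check{\bH}^-)
= \check{\bF}^2\bLambda,
\end{equation*}
this time relying on $(\check{\bH}^-)^2 = \bI_n$ and the second part of Theorem~\ref{thm:cymcirc_diag_hartley_node}. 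The equality $\check{\bH}^-\bA\check{\bH}^+ = \bLambda\check{\bF}^2 = \check{\bF}^2\bLambda$ would be obtained identically by swapping the roles of $\check{\bH}^+$ and $\check{\bH}^-$ throughout, noting that $\check{\bH}^-\check{\bH}^+ = \check{\bF}^2$ as well.

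There is essentially no obstacle here: the proof is an algebraic manipulation in which every ingredient has already been proved in Corollary~\ref{coro:hartley_ortho_mixed} and Theorem~\ref{thm:cymcirc_diag_hartley_node}. The only mild subtlety worth a remark is that the commutation $\bLambda\check{\bF}^2 = \check{\bF}^2\bLambda$ is not imposed separately but follows automatically from the two distinct derivations producing the same left-hand side; this is consistent with the well-known fact that $\check{\bF}^2$ reorders indices as $k \mapsto n-k$ (mod $n$) and that the eigenvalues of a symmetric circulant matrix satisfy $\lambda_k = \lambda_{n-k}$, so that $\bLambda$ does commute with this permutation.
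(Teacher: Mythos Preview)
Your proof is correct and follows essentially the same approach as the paper: inserting the identity via $(\check{\bH}^\pm)^2=\bI_n$ and invoking Theorem~\ref{thm:cymcirc_diag_hartley_node} together with $\check{\bH}^+\check{\bH}^-=\check{\bH}^-\check{\bH}^+=\check{\bF}^2$ from Corollary~\ref{coro:hartley_ortho_mixed}. The only minor difference is that the paper obtains the commutation $\bLambda\check{\bF}^2=\check{\bF}^2\bLambda$ by transposing (using the symmetry of $\bA$, $\check{\bH}^\pm$ and $\check{\bF}^2$), whereas you obtain it from the two distinct insertions on the same product; both arguments are equally valid.
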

\begin{proof}
    From \cref{thm:cymcirc_diag_hartley_node} and \cref{coro:hartley_ortho_mixed}, we have
    \begin{align}
        \check{\bH}^+\bA\check{\bH}^- 
        & = \check{\bH}^+\bA\check{\bH}^+\check{\bH}^+\check{\bH}^- 
        = \check{\bH}^+\bA\check{\bH}^+ \check{\bF}^2 = \bLambda \check{\bF}^2,\\
        \check{\bH}^-\bA\check{\bH}^+
        & = \check{\bH}^-\bA\check{\bH}^-\check{\bH}^-\check{\bH}^+
        =  \check{\bH}^-\bA\check{\bH}^- \check{\bF}^2 = \bLambda \check{\bF}^2,\\
        (\check{\bH}^+\bA\check{\bH}^-)^{\transp}
        & = (\bLambda \check{\bF}^2)^{\transp} = \check{\bF}^2 \bLambda = \check{\bH}^-\bA\check{\bH}^+ = \bLambda \check{\bF}^2,
    \end{align}
    where the last row follows from the symmetry of $\bA$, $\check{\bH}^\pm$ and $\check{\bF}^2$.
\end{proof}

\begin{lemma}\label{lem:hartley_shift}
    $\bH = \sqrt{n}(\check{\bH}^+ \bC + \check{\bH}^- \bS)$, 
    with 
    $\bC \bydef \Diag(\{ \cos\frac{k\pi}{n} \}_{k=0}^{n-1})$
    and
    $\bS \bydef \Diag(\{ \sin\frac{k\pi}{n} \}_{k=0}^{n-1})$.
\end{lemma}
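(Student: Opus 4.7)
The plan is to verify the identity entry-wise by comparing the $(j,k)$-entries of both sides and collapsing the result with the standard angle-addition formulas.

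First, I would write out the $(j,k)$-entry of the right-hand side. Since $\bC$ and $\bS$ are diagonal, post-multiplication by them simply rescales columns, so
\begin{equation*}
    \bigl[\sqrt{n}(\check{\bH}^+ \bC + \check{\bH}^- \bS)\bigr]_{j,k}
    = \sqrt{n}\left[(\check{\bH}^+)_{j,k}\cos\tfrac{k\pi}{n} + (\check{\bH}^-)_{j,k}\sin\tfrac{k\pi}{n}\right].
\end{equation*}
Substituting the definitions of $\check{\bH}^\pm = \check{\bH}_c \pm \check{\bH}_s$ and of $\check{\bH}_c,\check{\bH}_s$ from \cref{eq:fourier_hartley_matrix_generic_node}, the factor $\sqrt{n}$ cancels the $1/\sqrt{n}$ in the node-centered entries.

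Second, I would group the resulting four terms so as to apply the angle-sum identities $\cos(a+b) = \cos a \cos b - \sin a \sin b$ and $\sin(a+b) = \sin a \cos b + \cos a \sin b$ with $a = 2jk\pi/n$ and $b = k\pi/n$. The two ``$\cos \cdot \cos$'' and ``$-\sin\cdot\sin$'' terms combine to yield $\cos\frac{(2j+1)k\pi}{n}$, while the two ``$\sin\cdot\cos$'' and ``$\cos\cdot\sin$'' terms combine to yield $\sin\frac{(2j+1)k\pi}{n}$. Their sum is precisely $(\bH)_{j,k}$ as defined in \cref{eq:hartley_matrix_generic}.

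Since the identity holds for every pair $(j,k)$ with $j,k=0,\ldots,n-1$, the matrix identity follows. There is no real obstacle here: the only mild bookkeeping is to keep track of the signs introduced by $\check{\bH}^-$, which is exactly what makes the ``$-\sin\cdot\sin$'' contribution appear with the right sign to match the cosine angle-addition formula.
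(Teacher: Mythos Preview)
Your proposal is correct and follows essentially the same approach as the paper: an entry-wise computation using the angle-addition formulas for $\cos$ and $\sin$ with $a=2jk\pi/n$ and $b=k\pi/n$. The only cosmetic difference is direction---the paper expands $(\bH)_{j,k}$ via the addition formulas and then reads off the matrix factorization, whereas you start from the right-hand side and collapse it to $(\bH)_{j,k}$---but the substance is identical.
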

\begin{proof}
    From elementary trigonometric identities, we have
    \begin{align}
        \cos\frac{(2j+1)k\pi}{n} &= \cos\frac{2jk\pi}{n}\cos\frac{k\pi}{n} - \sin\frac{2jk\pi}{n}\sin\frac{k\pi}{n}, \\
        \sin\frac{(2j+1)k\pi}{n} &= \sin\frac{2jk\pi}{n}\cos\frac{k\pi}{n} + \cos\frac{2jk\pi}{n}\sin\frac{k\pi}{n},
    \end{align}
    then \cref{lem:hartley_shift} follows.
\end{proof}

\begin{lemma}\label{lem:C_F2_S_plus_S_F2_C}
    $\bC \check{\bF}^2 \bS + \bS \check{\bF}^2 \bC = \bzero_n$, where $\bC$ and $\bS$ are defined as in \cref{lem:hartley_shift}.
\end{lemma}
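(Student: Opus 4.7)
The plan is to first identify $\check{\bF}^2$ explicitly as a permutation matrix, then exploit the resulting sparsity to reduce the lemma to a pointwise trigonometric cancellation. Starting from $\check{\bF}_{j,k} = n^{-1/2}\bigl(\cos(2\pi jk/n) + i\sin(2\pi jk/n)\bigr) = n^{-1/2} e^{i 2\pi jk/n}$ as given by \cref{eq:fourier_hartley_matrix_generic_node}, I would expand
\begin{equation*}
    (\check{\bF}^2)_{j,k}
    = \sum_{l=0}^{n-1} \check{\bF}_{j,l} \check{\bF}_{l,k}
    = \frac{1}{n} \sum_{l=0}^{n-1} e^{i 2\pi l (j+k)/n},
\end{equation*}
and sum the resulting geometric series to show that this equals $1$ when $j+k \equiv 0 \pmod n$ and $0$ otherwise. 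Hence $\check{\bF}^2$ is the permutation matrix that reverses indices modulo $n$, sending $j$ to $(n-j) \bmod n$, with support on the ``anti-diagonal'' $\{(j,k) : k \equiv -j \pmod n\}$.

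With this in hand, the rest of the argument is essentially entry-by-entry. Because $\bC$ and $\bS$ are diagonal, both $\bC \check{\bF}^2 \bS$ and $\bS \check{\bF}^2 \bC$ inherit the anti-diagonal sparsity of $\check{\bF}^2$, and each surviving entry is just the product of the diagonal factors of $\bC$ and $\bS$ at the corresponding indices. For $j \in \{1,\ldots,n-1\}$ with $k = n - j$, I would invoke the reflection identities $\sin(\pi-\theta) = \sin\theta$ and $\cos(\pi-\theta) = -\cos\theta$ to obtain $\sin(k\pi/n) = \sin(j\pi/n)$ and $\cos(k\pi/n) = -\cos(j\pi/n)$. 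The $(j,k)$ entries of $\bC \check{\bF}^2 \bS$ and $\bS \check{\bF}^2 \bC$ then become $\cos(j\pi/n)\sin(j\pi/n)$ and $-\sin(j\pi/n)\cos(j\pi/n)$, respectively, which cancel. The case $j = k = 0$ is immediate, since each product carries a factor of $\sin 0 = 0$.

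The principal (minor) obstacle is the careful bookkeeping of modular arithmetic when identifying $\check{\bF}^2$ as the index-reversal permutation; once this structure is isolated, the cancellation follows at once from the standard reflection identities for sine and cosine. This establishes $\bC \check{\bF}^2 \bS + \bS \check{\bF}^2 \bC = \bzero_n$.
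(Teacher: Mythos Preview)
Your proof is correct and takes essentially the same approach as the paper: identify the support of $\check{\bF}^2$ on the anti-diagonal $\{(j,k): j+k\equiv 0 \pmod n\}$, then use the reflection identities $\cos(\pi-\theta)=-\cos\theta$ and $\sin(\pi-\theta)=\sin\theta$ to see the entrywise cancellation. The only cosmetic difference is that the paper cites a reference for the structure of $\check{\bF}^2$ while you derive it directly via a geometric series.
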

\begin{proof}
    From \cite[Lemma~1]{bini_matrix_1993}, we have that, for $j,k=0,\dots,n-1$, $(\check{\bF}^2)_{j,k} = 1$ if $j=k=0$ or $j+k = n$, and $(\check{\bF}^2)_{j,k} = 0$ otherwise.
    Moreover, $(\bC \check{\bF}^2 \bS)_{j,k} = (\check{\bF}^2)_{j,k} \cos\frac{j\pi}{n} \sin\frac{k\pi}{n}$, and, by symmetry, $\bS \check{\bF}^2 \bC = (\bC \check{\bF}^2 \bS)^{\transp}$.
    Now, if $j=k=0$, $(\bC \check{\bF}^2 \bS)_{j,k} = (\bS \check{\bF}^2 \bC)_{j,k} = 0$ trivially.
    Otherwise, if $j \ne 0$, $k \ne 0$, and $j+k \ne n$, $(\bC \check{\bF}^2 \bS)_{j,k} = (\bS \check{\bF}^2 \bC)_{j,k} = 0$.
    Finally, if $j+k=n$, elementary trigonometric identities induce
    \begin{align}
        (\bC \check{\bF}^2 \bS)_{j,k} 
        & =  \cos\frac{j\pi}{n} \sin\frac{(n-j)\pi}{n} 
        = \cos\frac{j\pi}{n} \sin\mleft(\pi - \frac{j\pi}{n}\mright)
        = \cos\frac{j\pi}{n} \sin\frac{j\pi}{n}, \\
        (\bC \check{\bF}^2 \bS)_{k,j} 
        & = \sin\frac{j\pi}{n} \cos\frac{(n-j)\pi}{n} 
        = \sin\frac{j\pi}{n} \cos\mleft(\pi - \frac{j\pi}{n}\mright) 
        = - \cos\frac{j\pi}{n} \sin\frac{j\pi}{n},
    \end{align}
    so that $(\bC \check{\bF}^2 \bS + \bS \check{\bF}^2 \bC)_{j,k} = (\bC \check{\bF}^2 \bS)_{j,k} + (\bC \check{\bF}^2 \bS)_{k,j} = 0$, thus completing the proof.
\end{proof}

We are now ready to state and prove \cref{thm:cymcirc_diag_hartley_cell}.
\begin{theorem}\label{thm:cymcirc_diag_hartley_cell}
    Let $\bA = \Circ(\ba) \in \R^{n \times n}$ be a symmetric, circulant matrix and let $\bLambda$ be defined as in \cref{thm:cymcirc_diag_hartley_node}.
    Then ${\bH^{\transp} \bA \bH = n\bLambda}$.
\end{theorem}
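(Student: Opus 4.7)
The plan is to reduce the cell-centered case to the node-centered results already assembled in \cref{thm:cymcirc_diag_hartley_node,coro:mixed_hartley_pm} via the change-of-basis identity in \cref{lem:hartley_shift}. The main observation is that every factor appearing on the right-hand side of the expanded product is either diagonal (hence commutes with $\bLambda$) or has already been analyzed.

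First, I would use \cref{lem:hartley_shift} to write $\bH = \sqrt{n}(\check{\bH}^+ \bC + \check{\bH}^- \bS)$. Since $\check{\bH}^\pm$ are symmetric and $\bC, \bS$ are diagonal, this also gives $\bH^{\transp} = \sqrt{n}(\bC \check{\bH}^+ + \bS \check{\bH}^-)$. Expanding the product yields
\begin{equation}
\bH^{\transp}\bA\bH
= n\bigl[
\bC\check{\bH}^+\bA\check{\bH}^+\bC
+ \bC\check{\bH}^+\bA\check{\bH}^-\bS
+ \bS\check{\bH}^-\bA\check{\bH}^+\bC
+ \bS\check{\bH}^-\bA\check{\bH}^-\bS
\bigr].
\end{equation}

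Next, I would substitute the known diagonalizations: by \cref{thm:cymcirc_diag_hartley_node}, the two ``pure'' terms $\check{\bH}^\pm\bA\check{\bH}^\pm$ both equal $\bLambda$; by \cref{coro:mixed_hartley_pm}, the two ``mixed'' terms $\check{\bH}^\pm\bA\check{\bH}^\mp$ both equal $\bLambda\check{\bF}^2$. Since $\bC, \bS, \bLambda$ are all diagonal and hence commute pairwise, this becomes
\begin{equation}
\bH^{\transp}\bA\bH
= n\bLambda\bigl[(\bC^2 + \bS^2) + (\bC\check{\bF}^2\bS + \bS\check{\bF}^2\bC)\bigr].
\end{equation}

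Finally, I would finish by invoking two elementary facts: $\bC^2 + \bS^2 = \bI_n$ follows immediately from $\cos^2(k\pi/n) + \sin^2(k\pi/n) = 1$ (diagonal entry-wise), and $\bC\check{\bF}^2\bS + \bS\check{\bF}^2\bC = \bzero_n$ is exactly \cref{lem:C_F2_S_plus_S_F2_C}. Combining these gives $\bH^{\transp}\bA\bH = n\bLambda$, as claimed. There is no real obstacle here: once the shift identity of \cref{lem:hartley_shift} is applied, the proof is a bookkeeping exercise in which the off-diagonal (mixed) contribution vanishes precisely because of the structural cancellation recorded in \cref{lem:C_F2_S_plus_S_F2_C}. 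The only point requiring a little care is to justify the commutation of $\bLambda$ with $\bC$ and $\bS$, which is immediate since all three are diagonal.
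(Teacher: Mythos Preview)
Your proof is correct and follows essentially the same route as the paper's own argument: expand $\bH^{\transp}\bA\bH$ via \cref{lem:hartley_shift}, replace the pure and mixed blocks using \cref{thm:cymcirc_diag_hartley_node} and \cref{coro:mixed_hartley_pm}, factor out $\bLambda$ by commutativity of diagonal matrices, and conclude with $\bC^2+\bS^2=\bI_n$ and \cref{lem:C_F2_S_plus_S_F2_C}.
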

\begin{proof}
    Starting from the expression of \cref{lem:hartley_shift}, and applying \cref{thm:cymcirc_diag_hartley_node}, \cref{coro:mixed_hartley_pm} and \cref{lem:C_F2_S_plus_S_F2_C}, we obtain
    \begin{align}
        n^{-1}\bH^{\transp} \bA \bH
        & = \bC \check{\bH}^+ \bA \check{\bH}^+ \bC + \bS \check{\bH}^- \bA \check{\bH}^- \bS
        + \bC \check{\bH}^+ \bA \check{\bH}^- \bS + \bS \check{\bH}^- \bA \check{\bH}^+ \bC\\
        & = \bC \bLambda \bC + \bS \bLambda \bS + \bC \bLambda \check{\bF}^2 \bS + \bS \bLambda \check{\bF}^2 \bC
        = \bLambda( \bC^2 + \bS^2 + \bC \check{\bF}^2 \bS + \bS \check{\bF}^2 \bC),
    \end{align}
    so that $n^{-1}\bH^{\transp} \bA \bH = \bLambda( \bC^2 + \bS^2) = \bLambda$ since $\bC^2 + \bS^2 = \bI_n$ by elementary trigonometry.
\end{proof}

\section{Optimality of the Galerkin operator}%
\label{sec:annex_galerkin}%
Let $n_1, n_0 \in \mathbb{N}$ be such that $n_1 > n_0 > 0$,
and let $\bA_1 \in \R^{n_1 \times n_1}$ and $\bA_0 \in \R^{n_0 \times n_0}$ be two linear operators.
Let $\bR \in \R^{n_0 \times n_1}$ and $\bP \in \R^{n_1 \times n_0}$ be a restriction operator and a prolongation operator, respectively, such that 
$\bR = \bP^{\transp}$ and $\bP^{\transp} \bW_1 \bP = \bW_0$ (see \cref{eq:prop_restr_prolong}).
We seek the operator $\bA_0^*$ that minimizes $\| \bA_1 - \bP \bA_0 \bR \|^2_{\textrm{F}, \bW_1}$ among the linear operators $\bA_0 \in \R^{n_0 \times n_0}$.
We restrict ourselves to the case where, for $\ell \in \{0,1\}$, $\bW_{\ell} = n_{\ell}^{-1} \bI_{n_{\ell}}$, so that $\bP^{\transp} \bP = \bR \bR^{\transp} = (n_1/n_0)\bI_{n_0}$ and
\begin{equation}\label{eq:Galerkin_optim}
    \bA_0^* 
    \bydef 
    \argmin_{\bA_0  \in \R^{n_0 \times n_0}} \| \bA_1 - \bP \bA_0 \bR \|^2_{\textrm{F}, \bW_1}
    =
    \argmin_{\bA_0  \in \R^{n_0 \times n_0}} \| \bA_1 - \bP \bA_0 \bR \|^2_{\textrm{F}},
\end{equation}
where $\|\cdot\|_{\textrm{F}}$ denotes the classical (unweighted) Frobenius norm.
We start by writing
\begin{align}
    \| \bA_1 - \bP \bA_0 \bR \|^2_{\textrm{F}}
    &= \| \bA_1 - \bP \bA_0 \bR \|^2_{\textrm{F}}
    = \| \vect(\bA_1 - \bP \bA_0 \bR) \|^2_2 \\
    &= \| \vect(\bA_1) - \vect(\bP \bA_0 \bR) \|^2_2 
    = \| \vect(\bA_1) -  (\bR^{\transp} \otimes \bP) \vect(\bA_0) \|^2_2,
\end{align}
so that \cref{eq:Galerkin_optim} is recast as an ordinary linear least squares problem
whose solution is
\begin{align}
    \vect(\bA_0^*)
    & = \left( (\bR^{\transp} \otimes \bP)^{\transp} (\bR^{\transp} \otimes \bP) \right)^{-1} (\bR^{\transp} \otimes \bP)^{\transp} \vect(\bA_1) \\
    &= \left( (\bR \otimes \bP^{\transp}) (\bR^{\transp} \otimes \bP) \right)^{-1} (\bR \otimes \bP^{\transp}) \vect(\bA_1) \\
    &= ( \bR \bR^{\transp} \otimes \bP^{\transp} \bP )^{-1} \vect(\bP^{\transp} \bA_1 \bR^{\transp})
    = \left( (n_1/n_0)\bI_{n_0} \otimes (n_1/n_0) \bI_{n_0} \right)^{-1} \vect(\bR \bA_1 \bP) \\
    &=  (n_0/n_1)^2  (\bI_{n_0} \otimes \bI_{n_0}) \vect(\bR \bA_1 \bP) 
    =  (n_0/n_1)^2  \vect(\bR \bA_1 \bP),
\end{align}
thus proving that
$    \bA_0^* 
    = (n_0/n_1)^2 \bR \bA_1 \bP
$.

\section{Filtered restriction of Hartley vectors}%
\label{app:proof_filtered_prol_restr_hartley}%
We start by looking at the effect of the Shapiro filter on the fine Hartley basis vectors,
\begin{equation}
    \forall j,k=0,\ldots,n_1-1,
    \quad
    (\bS_1 \bh^1_k)_j
    = \frac{1}{4}
    \left[
        (\bh^1_k)_{j-1} + 2 (\bh^1_k)_{j} + (\bh^1_k)_{j+1}
    \right].   
\end{equation}
Elementary trigonometric identities imply that, for $j,k=0,\ldots,n_1-1$,
\begin{align}
    \cos\frac{(2j-1)k\pi}{n_1} + \cos\frac{(2j+1)k\pi}{n_1} &= 2c_k\cos\frac{2jk\pi}{n_1}, 
    \label{eq:app_trig_Sh_1}\\
    \cos\frac{(2j+1)k\pi}{n_1} + \cos\frac{(2j+3)k\pi}{n_1} &= 2c_k\cos\frac{(2j+2)k\pi}{n_1}, \\
    \sin\frac{(2j-1)k\pi}{n_1} + \sin\frac{(2j+1)k\pi}{n_1} &= 2c_k\sin\frac{2jk\pi}{n_1}, \\
    \sin\frac{(2j+1)k\pi}{n_1} + \sin\frac{(2j+3)k\pi}{n_1} &= 2c_k\sin\frac{(2j+2)k\pi}{n_1},
    \label{eq:app_trig_Sh_4}
\end{align}
leading to
\begin{equation}
    (\bS_1 \bh^1_k)_j
    = \frac{c_k}{2}
    \left[
    \cos\frac{2jk\pi}{n_1} + \cos\frac{(2j+2)k\pi}{n_1}
    + \sin\frac{2jk\pi}{n_1} + \sin\frac{(2j+2)k\pi}{n_1}
    \right]
    =
    c_k^2 (\bh^1_k)_j,
\end{equation}
where the last identity is obtained by applying the same trigonometric identities as for \crefrange{eq:app_trig_Sh_1}{eq:app_trig_Sh_4}, and thus showing that $\bS_1 \bh^1_k = c_k^2 \bh^1_k$, for $k=0,\ldots,n_1-1$.
Then, \cref{eq:prolong_hartley} implies that, for $k=0,\ldots,n_0-1$,
$
    \bar{\bP} \bh^0_k = \bS_1 \bP \bh^0_k 
    =
    c_k \bS_1 \bh^1_k - c_{n_0+k} \bS_1 \bh^1_{n_0+k}
$,
from which \cref{eq:filtered_prolong_hartley} follows.
Furthermore, \cref{eq:filtered_restrict_hartley} follows immediately from the direct application of \cref{eq:restrict_hartley} to $\bar{\bR} \bh^1_k = \bR \bS_1 \bh^1_k = c_k^2 \bR\bh^1_k$.

\section{Factorization of the diffusion-based covariance matrix}%
\label{app:facto_L}%

Let $\bDelta$ and $\bW$ as defined in \cref{sec:experiments}.
Because $\bDelta$ is self-adjoint with respect to $\langle \cdot, \cdot \rangle_{\bW}$, 
we have
\begin{equation}
    \bW(\bI - \bDelta) 
    =
    \bW - \bW \bDelta = \bW - \bDelta^{\transp}\bW = (\bI-\bDelta)^{\transp} \bW,
\end{equation}
and thus, for $q\in\N$,
\begin{equation}
    \bW(\bI - \bDelta)^q
    = \bW(\bI - \bDelta)(\bI - \bDelta)^{q-1}
    = (\bI-\bDelta)^{\transp} \bW (\bI - \bDelta)^{q-1} 
    = \cdots 
    = [(\bI-\bDelta)^{\transp}]^q \bW. 
\end{equation}
It follows that
\begin{equation}
    \bA 
    \bydef
    (\bI - \bDelta)^{-q} \bW^{-1}
    = ( \bW(\bI - \bDelta)^q )^{-1}
    = ( [(\bI-\bDelta)^q]^{\transp} \bW )^{-1}
    = \bW^{-1} [(\bI-\bDelta)^{-q}]^{\transp},
\end{equation}
and thus
\begin{equation}
    \bL 
    \bydef
    (\bI - \bDelta)^{-2q} \bW^{-1}
    = (\bI - \bDelta)^{-q} \bA
    = (\bI - \bDelta)^{-q} \bW^{-1} [(\bI-\bDelta)^{-q}]^{\transp}
    = \bA \bW \bA^{\transp}.
\end{equation}

\printbibliography

\end{document}